\documentclass[a4paper]{amsart}

\usepackage{graphicx}
\graphicspath{ {./images/} }
\usepackage{subcaption}

\usepackage{amsfonts,amssymb,mathrsfs,bbold,comment,atableau}
\usepackage{stmaryrd,soul,adjustbox,circuitikz}
\usepackage[a4paper,hmargin=26mm,vmargin=28mm]{geometry}

\usepackage{enumitem}
\setlist[enumerate]{label=\upshape(\alph*)., ref=\alph*}

\usepackage[svgnames]{xcolor}
\definecolor{mblue}{HTML}{1F77B4}
\definecolor{mgreen}{HTML}{2CA02C}
\definecolor{mred}{HTML}{D62728}

\usepackage{booktabs,upgreek}
\usepackage{xparse,mathtools,etoolbox}

\usepackage{tikz}
\usetikzlibrary{arrows,decorations.markings}
\tikzset{%
  ->-/.style={decoration={markings,
                          mark=at position 0.6 with {\arrow[thin,scale=3]{>}}},
              postaction={decorate}},
  -<-/.style={decoration={markings,
                          mark=at position 0.6 with {\arrow[thin,scale=3]{<}}},
              postaction={decorate}},
  circled/.style = {fill=Tan},
}

\title{Stingray Patterns of Dominant Weights}
\author{Tao Qin}

\subjclass[2020]{Primary 05E10; Secondary 20C08, 20F55, 05A17}

\keywords{dominant weights, affine Weyl groups, alcove geometry,
Iwahori--Hecke algebras, $e$-cores, $e$-weights, abacus combinatorics,
runner removal}

\usepackage{cite}
\usepackage[pagebackref=false]{hyperref}
\hypersetup{
  pdftitle={\@title},
  pdfauthor={Tao Qin},
  colorlinks=false,
  linkcolor=black,
  citecolor=black,
  urlcolor=black,
  linkbordercolor=red,
  citebordercolor=red,
  urlbordercolor=red,
  pdfborder={0 0 1}
}

\newcommand\enumref[2]{\hyperref[#2]{\autoref*{#1}(\ref*{#2})}}

\AtBeginDocument{

}

\NewDocumentCommand{\Ab}{ O{} O{e} m }{%
  \ensuremath{%
    \operatorname{Ab}^{#1}_{#2}\!\left(#3\right)%
  }%
}

\newcommand{\N}{\mathbb{N}}

\newcommand{\Sym}{\mathfrak S}
\newcommand{\Z}{\mathbb{Z}}
\NewDocumentCommand{\triple}{O{r} O{e} O{w} }{(#1, #2, #3)}
\newcommand\Aone[1][e+1]{A^{(1)}_{#1}}

\newcommand\blam{{\boldsymbol\lambda}}

\def\c{\mathsf{c}}

\DeclarePairedDelimiterX{\set}[1]{\{}{\}}{\setargs{#1}}
\NewDocumentCommand{\setargs}{>{\SplitArgument{1}{|}}m}{\setargsaux#1}
\NewDocumentCommand{\setargsaux}{mm}
{\IfNoValueTF{#2}{#1} {#1\,\delimsize|\,\mathopen{}#2}}

\def\pmod#1{\text{ }(\textrm{mod } #1)\,}

\usepackage{aliascnt}
\def\NewTheorem#1{%
  \newaliascnt{#1}{equation}%
  \newtheorem{#1}[#1]{#1}%
  \aliascntresetthe{#1}%
  \expandafter\def\csname #1autorefname\endcsname{#1}%
}
\def\equationautorefname~#1\null{(#1)\null}
\def\itemautorefname~#1\null{(#1)\null}

\numberwithin{equation}{section}

\NewTheorem{Proposition}
\NewTheorem{Theorem}
\NewTheorem{Conjecture}
\NewTheorem{Corollary}
\NewTheorem{Lemma}
\NewTheorem{Definition}
\theoremstyle{definition}
\NewTheorem{Notation}
\NewTheorem{Example}
\AtEndEnvironment{Example}{\null\hfill$\Diamond$}%
\NewTheorem{Examples}
\AtEndEnvironment{Examples}{\null\hfill$\Diamond$}%
\theoremstyle{remark}
\NewTheorem{Remark}
\AtEndEnvironment{Remark}{\null\hfill$\Diamond$}%
\NewTheorem{Claim}

\begin{document}

\begin{abstract}
    We study the set $W_{r,e,w}$ of dominant weights of $\mathfrak{sl}_r$ arising from partitions of fixed $e$-weight $w$. For $e$-cores, we show that $W_{r,e,0}$ decomposes as a disjoint union of simplices indexed by compositions of $r$. For general $w$, we prove that $W_{r,e,w}$ is a disjoint union of copies of these simplices, with multiplicities determined by the corresponding quotient data, yielding in particular a closed counting formula for $|W_{r,e,w}|$. The geometry gives rise to the stingray patterns appearing in the title. More generally, it yields a natural labeling of the dominant $e$-alcoves meeting $W_{r,e,w}$ by weak compositions of $w$, together with a compatible partial action of the affine Weyl group via wall crossing. Finally, we give an explicit alcove-geometric proof of the empty runner removal theorem for Iwahori--Hecke algebras.
\end{abstract}

\maketitle
\setcounter{tocdepth}{1}
\tableofcontents

\section{Introduction}\label{sec:introduction}

The representation theory of symmetric groups, Iwahori–Hecke algebras, and $q$-Schur algebras is governed by the combinatorics of $e$-cores and $e$-weights. Over $\mathbb{C}$, decomposition numbers can be computed via canonical bases in highest weight representations of quantum affine algebras and parabolic Kazhdan–Lusztig polynomials; see classical sources \cite{kl-rep-coxeter,llt-llt-algorithm} or \cite[Chapter 6]{mathas-iwahori-hecke} for a summary.

A particularly striking manifestation of this structure is provided by the \emph{runner removal theorems}. In characteristic zero, James and Mathas proved an empty runner removal theorem for Iwahori--Hecke algebras and $q$--Schur algebras, relating decomposition numbers for different quantum characteristics by adding empty runners to the abacus \cite[Theorem 3.2 \& 4.5]{jamesmathas-empty-runner-removal}. Fayers subsequently established a full runner removal theorem \cite[Theorem 2.1]{fayers-full-runner-removal} and a more general version \cite[Theorem 3.4]{Fayers-general-runner-removal}. Chuang and Miyachi extended the empty runner removal theorem to positive characteristic by constructing appropriate Morita equivalences \cite{chuangmiyachi-runner-removal}. More recently, higher--level analogues for Ariki--Koike algebras have been obtained in \cite{alice-full-runner-removal,alice-empty-runner-removal} using higher--level Fock spaces. In related work, we aim to categorify these higher-level runner removal theorems \cite{qin-subdivision-runner-removal}.

Goodman suggested that the empty runner–removal theorems can also be obtained from a different viewpoint. In \cite{goodmanwenzl-crystal-bases}, Goodman and Wenzl identify certain parabolic Kazhdan–Lusztig polynomials for the affine Weyl group of type $A$ with polynomials whose specialization at $q=1$ yields the decomposition numbers of Iwahori–Hecke algebras. Via KLR theory, these polynomials are now known to be the graded decomposition numbers of Iwahori–Hecke algebras and $q$–Schur algebras; see \cite{bk-graded-decomposition-number}.

Combined with standard alcove–geometric techniques for affine Weyl groups \cite[Chapter 4]{humphreys-reflection-groups}, this suggests a conceptually clean proof of runner–removal theorems: the effect of adding or removing runners should be visible directly in the way suitable regions of the dominant chamber sit inside the alcove arrangement. In this paper we prove the empty–runner–removal theorem following Goodman’s idea, which leads to a shorter argument and reveals additional combinatorial structure.

The geometric foundation of our approach lies in identifying partitions with points in the weight lattice. By associating a partition of length at most $r$ with a dominant integral weight of $\mathfrak{sl}_r$ (shifted by the Weyl vector $\rho$), we can visualize the combinatorics of partitions inside the dominant chamber $\mathcal{C}^+$.

This embedding transforms algebraic constraints into geometric ones. Specifically, block theory for Hecke algebras and $q$-Schur algebras tells us that blocks are classified by the $e$-core and the $e$-weight, see \cite[Section 5.3]{mathas-iwahori-hecke}. This motivates the central question of this paper:

\medskip
\emph{Fix the rank $r$, the quantum characteristic $e$, and the $e$-weight $w$. How are the weights corresponding to partitions with these invariants distributed within the dominant chamber?}
\medskip

Denote the set of these weights by $\mathcal{W}_{r,e,w}$. Our analysis reveals that the geometry of $\mathcal{W}_{r,e,w}$ provides a good model for the interaction between the affine Weyl group and abacus combinatorics.

Our first main result describes the base case of $e$-cores ($w=0$). In \autoref{thm:simplex-of-core-weights} and \autoref{thm:simplicial-decomposition}, we show that the set of weights $\mathcal{W}_{r,e,0}$ forms a highly structured pattern: it is a disjoint union of simplices. These simplices are not arbitrary; they are naturally indexed by compositions of $r$, which correspond to specific configurations of beads on the abacus runners.

For the general case ($w>0$), we prove that this geometric structure is stable. The set $\mathcal{W}_{r,e,w}$ consists of copies of the simplices found in the core case, appearing with multiplicities determined by the number of ways to distribute the weight $w$ across the abacus. This yields a closed formula for the size of $\mathcal{W}_{r,e,w}$ as a sum over compositions of $r$ (\autoref{thm:composition-sum}).

This geometry is particularly transparent in type $A_2$. As $w$ increases, the weights in $\mathcal{W}_{3,e,w}$ form a characteristic ``stingray" pattern along the boundary and hexagonal patterns in the interior (see \autoref{fig:r3e8-grid-0-9}). These patterns generalise to arbitrary type $A_{r-1}$ for $r\ge3$, and we study some of their properties, see \autoref{subsec:stingray-hexagon}. This analysis also leads to a natural indexing of $e$–alcoves in the dominant chamber by weak compositions of $w$ of length $r$(see \autoref{subsec:index-alcoves}). 

Moreover, the set of weak compositions of length $r$ carries a partial action of the affine Weyl group, see \autoref{subsec:affine-weyl-group-action}. By transferring this action to the set of $e$-alcoves, we show that it indeed defines a geometric action of the affine Weyl group on $e$-alcoves via crossing the corresponding walls. This can be regarded as a generalization of indexing the dominant $e$-alcoves by the core partitions, and it provides another model for studying the affine Weyl group action on the dominant $e$-alcoves.

Finally, we return to runner–removal. In \autoref{sec:proof-empty}, we use Good\-man–Wenzl identification \cite[Theorem 5.3]{goodmanwenzl-crystal-bases}, to give an explicit alcove–geometric proof of the empty runner–removal theorem of James–Mathas \cite[Theorem 3.2]{jamesmathas-empty-runner-removal}. 

The paper is organised as follows. In \autoref{sec:preliminaries} we recall all the necessary background on partitions, $e$–cores, $e$–weights, abaci and the affine Weyl group of type~$A$, and we fix our conventions for $\Omega$ and alcove geometry. In \autoref{sec:pattern-dominant-weights} we study the pattern of dominant weights in $\mathcal{W}_{r,e,w}$, prove the simplicial decomposition and counting formulas, and analyse the stingray and regular patterns. In \autoref{sec:proof-empty}, we prove the empty runner–removal theorem.

\section{Preliminaries}\label{sec:preliminaries}
In this section, we collect the combinatorial and algebraic definitions required for our main results. Our primary goal is to establish the notation and conventions used throughout the paper. Consequently, we generally refer the reader to standard textbooks or comprehensive papers for more details.
\subsection{Partitions and compositions}\label{subsec:partitions-compositions}
Let $n$ be a non-negative integer. A \emph{partition} of $n$, denoted $\lambda \vdash n$, is a finite sequence $\lambda=(\lambda_1, \lambda_2, \cdots, \lambda_r)$ of weakly decreasing positive integers such that $n = \sum_{1\le i\le r} \lambda_i$. Here $n$ is called the \emph{size} of $\lambda$, denoted $|\lambda|$ and each $\lambda_i$ is called a \emph{part} of $\lambda$. The number of parts is the \emph{length} $\ell(\lambda)$. 

A \emph{composition} is a finite sequence $\mu=(\mu_1, \dots, \mu_k)$ of positive integers. The sum of the terms is the \emph{size} of $\mu$, denoted $|\mu|$. If $|\mu|=n$, we write $\mu \models n$.
A \emph{weak composition} is a sequence $\gamma=(\gamma_1, \dots, \gamma_r)$ of non-negative integers. Its \emph{size} is similarly defined as $|\gamma|=\sum \gamma_i$. Here $k$ and $r$ are called the \emph{length} of $\mu$ and $\gamma$, respectively.

\begin{Example}\label{eg:partition-composition}
    The tuple $(4,1,3)$ is a composition of size $8$ and length $3$ but not a partition; $(4,0,1,3)$ is a weak composition of size $8$ and length $4$ but not a composition (as it contains a zero).
\end{Example}

For an integer $e\ge 2$, a partition $\lambda=(\lambda_1,\dots,\lambda_r)$ is \emph{$e$–regular} if there are no $e$ equal parts.
\subsection{Multi-partitions and counting formulas}\label{subsec:multipartitions}
Let $\ell$ be a positive integer. A \emph{$\ell$-partition} is a tuple $\blam=\big(\blam^{(1)}\mid \ldots\mid\blam^{(\ell)}\big)$ of ordinary partitions.
Its \emph{size} is $|\blam|=\sum_{i=1}^{\ell} |\blam^{(i)}|$.

Given a composition $\mu=(\mu_1,\ldots,\mu_{\ell})$ of length $\ell$, a $\ell$-partition $\blam$ is said to be \emph{of type $\mu$} if it satisfies the length constraint:
\begin{equation*}
    \ell\big(\blam^{(i)}\big) \le \mu_i \quad \text{for all } 1 \le i \le \ell.
\end{equation*}

\begin{Definition}\label{dfn:A-mu-w}
    For a composition $\mu=(\mu_1,\ldots,\mu_{\ell})$ and $w\in\Z_{\ge 0}$, we define $A(\mu;w)$ to be the number of $\ell$-partitions of type $\mu$ with total size $w$:
        \begin{equation*}
            A(\mu;w)\;:=\;\#\Big\{\,\big(\blam^{(1)}\mid\ldots\mid\blam^{(\ell)}\big):\; |\blam^{(1)}|+\cdots+|\blam^{(\ell)}|=w,\ \ell\big(\blam^{(i)}\big)\le \mu_i\ \forall i\Big\}.
        \end{equation*}
\end{Definition}

\begin{Example}
    Let $\mu=(2,1)$ and $w=2$. To calculate $A\big((2,1); 2 \big)$, we look for $\big(\blam^{(1)}\mid \blam^{(2)}\big)$ such that $|\blam^{(1)}|+|\blam^{(2)}|=2$, $\ell(\blam^{(1)})\le 2$, and $\ell(\blam^{(2)})\le 1$.
    \begin{itemize}
        \item Case $|\blam^{(1)}|=2, |\blam^{(2)}|=0$: $\blam^{(1)} \in \{(2), (1,1)\}$, $\blam^{(2)}=\emptyset$. 
        
        \item Case $|\blam^{(1)}|=1, |\blam^{(2)}|=1$: $\blam^{(1)}=(1)$, $\blam^{(2)}=(1)$. 
        
        \item Case $|\blam^{(1)}|=0, |\blam^{(2)}|=2$: $\blam^{(1)}=\emptyset$. For $\blam^{(2)}$, the partition $(2)$ has length 1 which is valid, but $(1,1)$ has length 2 which is invalid since $\mu_2=1<\ell(1,1)$. 
    \end{itemize}
    Thus, $A\big((2,1); 2\big) = 2 + 1 + 1 = 4$.
\end{Example}
\subsection{Beta numbers and the $e$–abacus}\label{subsec:beta-abacus}
For a partition $\lambda=\big(\lambda_1,\dots,\lambda_{\ell(\lambda)}\big)$ and an integer $r\ge \ell(\lambda)$, the \emph{$r$–beta numbers} of $\lambda$ are
\begin{equation*}
  \beta_i(\lambda) \;=\; \lambda_i + r - i \qquad (1\le i\le r).
\end{equation*}
This sequence is strictly decreasing and consists of non-negative integers. Conversely, any strictly decreasing sequence $\{\beta_1>\beta_2>\cdots>\beta_{r}\}$ of $r$ non-negative integers determines a unique partition via $\lambda_i = \beta_i - (r-i)$.

James introduced the abacus model in \cite{james-abacus}. Fix $e \ge 2$. An \emph{$e$–abacus} has $e$ vertical \emph{runners} indexed by $0,1,\dots,e-1$ from left to right. The runner indexed by $i$ will be called the $i$–runner.

Each runner is divided into \emph{rows} indexed by the natural numbers $0,1,2,\dots$, increasing downwards from the top, and rows with the same index are aligned horizontally across all runners. For $0\le b\le e-1$ and $a\ge 0$ we refer to the position in row $a$ on the $b$–runner as the position $ae+b$. Each position may either carry a bead or be empty (a \emph{gap}). An \emph{$e$–abacus configuration}, or simply an \emph{$e$–abacus}, is a choice of finitely many beads placed on positions of this $e$–runner array. If it has $r$ beads, we call it an $e$–abacus with $r$ beads.

We can identify $r$–beta numbers with abacus configurations as follows. Given $r$–beta numbers $\{\beta_i\mid 1\leq i\leq r\}$, write $\beta_i=a_i e + b_i$ with $0\le b_i\le e-1$, form the $e$-abacus with $r$ beads by placing a bead on row $a_i$ of the $b_i$–runner for each $i$ and leave all other positions empty. Conversely, given an $e$–abacus with $r$ beads, record for each bead its position $ae+b$, and arrange these integers in decreasing order. This gives a strictly decreasing sequence of non-negative integers, i.e. a sequence of $r$–beta numbers.



Given a partition $\lambda=(\lambda_1,\dots,\lambda_r)$, we form its $r$–beta numbers $\beta_i(\lambda)$ and the corresponding $e$–abacus with $r$ beads, written $\operatorname{Ab}^r_e(\lambda)$ and called the \emph{$e$–abacus of $\lambda$ (with $r$ beads)}. By abuse of language we often identify a bead with its beta number and refer to the bead corresponding to $\beta_i$ simply as the bead $\beta_i$. In particular, we call the bead $\beta_1$ the \emph{first} bead and the bead $\beta_r$ the \emph{last} bead in $\operatorname{Ab}^r_e(\lambda)$.

In an $e$-abacus, a runner is called \emph{empty} if there is no bead on it, and it is called \emph{flush} if each bead on that runner is placed as high as possible. Formally, the $b$--runner is flush if, whenever there is a bead at position $ae+b$, there is also a bead at $(a-1)e+b$ (for $a \ge 1$).

\begin{Example}\label{eg:abaci-beta-numbers-4322}
    Consider the partition $\lambda=(4,3,2,2)$. We illustrate the abaci $\operatorname{Ab}^r_3(\lambda)$ for $r=4$ and $r=5$. 
    \begin{center}
        \begin{minipage}{0.45\textwidth}
            \centering
            \Abacus[runner labels={0,1,2}, entries=betas]{3}{4,3,2,2}
            
            \vspace{0.2cm}
            (a) $r=4$: beta numbers $(7,5,3,2)$
        \end{minipage}
        \hfill
        \begin{minipage}{0.45\textwidth}
            \centering
            \Abacus[runner labels={0,1,2}, entries=betas]{3}{4,3,2,2,0}
            
            \vspace{0.2cm}
            (b) $r=5$: beta numbers $(8,6,4,3,0)$
        \end{minipage}
    \end{center}
    In case (a), the $2$-runner is flush, whereas the $0$- and $1$-runners are not. In case (b), the $0$-runner is flush, while the $1$- and $2$-runners are not.
\end{Example}

We define elementary operations on abacus by moving a single bead to an adjacent position. Let a bead be located at position $x = ae+b$ (row $a$, runner $b$). A move is valid only if the target position is empty (a \emph{gap}). The atomic moves are:

\begin{itemize}
    \item \textbf{Vertical moves (Sliding):}
    \begin{itemize}
        \item \emph{Slide down:} $x \mapsto x+e$. This moves the bead to row $a+1$ on the same runner ($ae+b \mapsto (a+1)e+b$).
        \item \emph{Slide up:} $x \mapsto x-e$. This moves the bead to row $a-1$ on the same runner ($ae+b \mapsto (a-1)e+b$), provided $a \ge 1$.
    \end{itemize}
    
    \item \textbf{Horizontal moves (Shifting):}
    \begin{itemize}
        \item \emph{Shift right:} $x \mapsto x+1$. Generally, this moves the bead to the adjacent runner on the right ($b \mapsto b+1$) within the same row. However, if the bead is on the rightmost runner ($b=e-1$), it moves to the leftmost runner of the next row:
        \[
        ae + (e-1) \;\longmapsto\; (a+1)e + 0.
        \]
        \item \emph{Shift left:} $x \mapsto x-1$. Generally, this moves the bead to the adjacent runner on the left ($b \mapsto b-1$) within the same row. However, if the bead is on the leftmost runner ($b=0$), it moves to the rightmost runner of the previous row:
        \[
        ae + 0 \;\longmapsto\; (a-1)e + (e-1).
        \]
    \end{itemize}
\end{itemize}
General bead movements are obtained by iterating these atomic steps, provided the target position at each step is empty.
\subsection{Core, weight, and quotient}\label{subsec:core-quotient}
Fix $e\in\Z_{\ge 2}$. A partition $\lambda$ is called an \emph{$e$–core} if there exists some $r\ge \ell(\lambda)$ such that every runner of $\operatorname{Ab}^r_e(\lambda)$ is flush. For any partition $\lambda$, choose $r\ge \ell(\lambda)$, form the $e$–abacus with $r$ beads, and slide each bead upwards along its runner as far as possible. The resulting abacus corresponds to an $e$–core partition, denoted $\operatorname{core}_e(\lambda)$. We call $\operatorname{core}_e(\lambda)$ the \emph{$e$–core} of $\lambda$, and define the \emph{$e$–weight} of $\lambda$ to be the total number of upward moves needed to obtain $\operatorname{core}_e(\lambda)$ from $\operatorname{Ab}^r_e(\lambda)$, denoted by $w_e(\lambda)$. 
Equivalently,
\begin{equation*}
  w_e(\lambda)\;=\;\frac{|\lambda|-|\operatorname{core}_e(\lambda)|}{e}\;\in\;\mathbb{Z}_{\ge0},
\end{equation*}

The $e$–core and $e$–weight of a partition are independent of the choice of $r$; see \cite[Section~5.3]{mathas-iwahori-hecke}.
\begin{Example}
    In \autoref{eg:abaci-beta-numbers-4322}, for $r=4$ or $r=5$, there are runners that are not flush, hence $(4,3,2,2)$ is not a $3$-core. The corresponding $e$-cores correspond to the following abaci in the two cases, respectively:
    \begin{center}
        \Abacus[runner labels={0,1,2},entries=betas]{3}{2,0,0,0}\qquad\qquad\qquad
        \Abacus[runner labels={0,1,2},entries=betas]{3}{2,0,0,0,0}
    \end{center}
    Both abaci yield the same $3$-core partition $(2)$, and it is easy to verify that the $3$-weight is $3$.
\end{Example}
Fix an $e$-abacus of $\lambda$ with $r$ beads. For each runner $j \in \{0, \dots, e-1\}$, let $r_j$ be the number of beads on that runner. Let the row indices of these beads be $y_{j,1} > y_{j,2} > \dots > y_{j,r_j} \ge 0$.
We view this sequence as the $r_j$-beta numbers for a new partition $\lambda^{(j)}$. 
The \emph{$e$–quotient} of $\lambda$ is the $e$-tuple of partitions
\begin{equation}\label{eq:e-quotient}
    \operatorname{quot}_e(\lambda) = \big(\lambda^{(0)},\lambda^{(1)}, \dots, \lambda^{(e-1)}\big).
\end{equation}
A fundamental property relating the core, quotient, and weight is:
\[
    |\lambda| = |\operatorname{core}_e(\lambda)| + e \sum_{j=0}^{e-1} |\lambda^{(j)}|.
\]
Comparing this with the definition of the $e$-weight, we see that $w_e(\lambda) = \sum_{j=0}^{e-1} |\lambda^{(j)}|$.
A partition $\lambda$ is uniquely determined by its $e$-core and its $e$-quotient.

\begin{Example}
    In \autoref{eg:abaci-beta-numbers-4322}, $\lambda=(4,3,2,2)$ and $e=3$. The $4$-beta numbers are $(7,5,3,2)$.
    $0$-runner has a single bead at row $1$, which corresponds to the partition $\lambda^{(0)}=(1)$.
    $1$-runner has a bead at row $2$, yielding $\lambda^{(1)}=(2)$.
    $2$-runner contains beads at rows $1$ and $0$, which correspond to the empty partition $\lambda^{(2)}=\emptyset$.
    The $3$-quotient is $\big((1), (2), \emptyset\big)$. The total size is $1+2+0=3$, which matches the $3$-weight of $w_3(4,3,2,2)=3$.
\end{Example}
\subsection{Coxeter systems and Hecke algebras}
A \emph{Coxeter system} is a pair $(W,S)$ where $W$ is a group generated by a set $S$ subject to relations
\begin{equation*}
  \underbrace{s t s \cdots}_{m_{st}\ \text{factors}}
  \;=\;
  \underbrace{t s t \cdots}_{m_{st}\ \text{factors}}
  \qquad (s,t\in S).
\end{equation*}
determined by a matrix $(m_{st})_{s,t\in S}$. The entries satisfy $m_{ss}=1$ and $m_{st}=m_{ts}\in \{2,3,\dots,\infty\}$ for $s\ne t$. If $m_{st}=\infty$, no relation is imposed between $s$ and $t$. Let $\ell$ denote the length function and $\le$ the Bruhat order on $W$. An expression $w=s_1\cdots s_r$ for $w\in W$, $s_i\in S$ is \emph{reduced} if $\ell(w)=r$; see \cite[Chapter 5]{humphreys-reflection-groups} for standard theory.

Following the notation of \cite{soergel-kazhdan-lusztig-tilting}, the \emph{Hecke algebra} $\mathcal{H}(W)$ is the unital associative $\mathbb{Z}[v,v^{-1}]$-algebra generated by $\{H_s\mid s\in S\}$, subject to relations
\begin{align*}
    &(H_s+v)(H_s-v^{-1})=0 \qquad (s\in S),\\
    &\underbrace{H_s H_t H_s \cdots}_{m_{st}\ \text{factors}}
    \;=\;
    \underbrace{H_t H_s H_t \cdots}_{m_{st}\ \text{factors}}
    \qquad (s,t\in S,\ m_{st}<\infty).
\end{align*}
For a reduced expression $w=s_1\cdots s_r$, set $H_w=H_{s_1}\cdots H_{s_r}$; then $\{H_w\mid w\in W\}$ is a basis. We define the \emph{bar-involution} by $\overline{v}=v^{-1}$ and $\overline{H_w}=H_{w^{-1}}^{-1}$ and extend linearly. The \emph{Kazhdan–Lusztig basis} is the unique bar-invariant basis $\{\underline{H}_w\mid w\in W\}$ satisfying
\begin{equation*}
  \underline{H}_w \;=\; \sum_{y\le w} h_{y,w}(v)\,H_y,
  \qquad h_{w,w}=1,\quad h_{y,w}(v)\in v\,\mathbb{Z}[v]\ (y<w).
\end{equation*}
These notations differ from the original ones used in \cite{kl-rep-coxeter}; see \cite[Section 2]{soergel-kazhdan-lusztig-tilting} for a discussion of these conventions.
\subsection{Anti-spherical Kazhdan–Lusztig polynomials}\label{subsec:parabolic-kl}
While parabolic Kazhdan--Lusztig theory was originally developed by Deodhar \cite{deodhar-parabolic-kl}, we adopt the framework and notation of \cite[Section 3]{soergel-kazhdan-lusztig-tilting} for consistency.

Fix a Coxeter system $(W,S)$ and $I\subseteq S$. The subgroup $W_I$ generated by $I$ is called a \emph{parabolic subgroup}. The \emph{parabolic subalgebra} $\mathcal{H}_I$ of $\mathcal{H}(W)$ is the subalgebra generated by $\{H_s \mid s \in I\}$.
Let ${}^{I}W$ be the set of minimal length representatives for the right cosets
$W_I\backslash W$:
\begin{equation*}
  {}^{I}W \;=\; \{w \in W \mid \ell(sw) > \ell(w) \text{ for all } s \in I\}.
\end{equation*}
Each $w \in {}^{I}W$ is the unique element of minimal length in the coset $W_I w$.

Let $\operatorname{sgn}_I$ be the one–dimensional right $\mathcal{H}_I$–module where $H_s$ acts by multiplication by $-v$ for all $s\in I$. The \emph{anti-spherical module} is the induced right $\mathcal{H}(W)$-module:
\begin{equation*}
  \mathcal{N}_I \;=\; \operatorname{sgn}_I \otimes_{\mathcal{H}_I}\mathcal{H}(W).
\end{equation*}
This module has a standard basis given by $N_x=1\otimes H_x$ for $x\in{}^{I}W$. The bar-involution on $\mathcal{H}(W)$ extends to a bar-involution on $\mathcal{N}_I$, denoted $n \mapsto \overline{n}$, which is determined by the properties:
\begin{equation*}
    \overline{N_{\text{id}}} = N_{\text{id}} \quad \text{and} \quad \overline{n \cdot h} = \overline{n} \cdot \overline{h} \quad (n \in \mathcal{N}_I, h \in \mathcal{H}(W)).
\end{equation*}

By \cite[Proposition 3.2]{deodhar-parabolic-kl} or \cite[Theorem 3.1]{soergel-kazhdan-lusztig-tilting}, there exists a unique bar-invariant basis $\{\underline{N}_x\mid x\in{}^{I}W\}$ of $\mathcal{N}_I$ such that
\begin{equation*}
    \underline{N}_x \;=\; N_x \;+\!\!\sum_{\substack{y\in{}^{I}W, y<x}}\! \mathfrak{n}_{y,x}(v)\,N_y,
    \qquad \mathfrak{n}_{y,x}(v)\in v\,\mathbb{Z}[v].
\end{equation*}
The coefficients $\mathfrak{n}_{y,x}(v)$ are called the \emph{anti-spherical} \emph{Kazhdan–Lusztig polynomials}. \footnote{The reader is encouraged to visit \href{https://www.jgibson.id.au/lievis/affine_weyl/\#.labels:asklpoly-}{Lievis} to see the pattern of anti-spherical KL polynomials and many other interactive visualizations.}
\subsection{Type $A$ Cartan data}\label{subsec:type-A-data}
Fix $r\ge2$ and set $E=\Bigl\{(x_1,\dots,x_r)\in\mathbb{R}^r \,\Big|\, \sum_{i=1}^r x_i=0\Bigr\}$,
equipped with the restriction of the standard inner product $(\cdot,\cdot)$ on $\mathbb{R}^r$. Let $\varepsilon_i$ be the $i$th standard basis vector of $\mathbb{R}^r$.

The \emph{root system} of type $A_{r-1}$ is
\[
  R=\{\varepsilon_i-\varepsilon_j \mid 1\le i\ne j\le r\}\subset E.
\]
The set of \emph{simple roots} is
\begin{equation*}
  \Delta \;=\; \{\alpha_1,\dots,\alpha_{r-1}\}, \qquad \alpha_i=\varepsilon_i-\varepsilon_{i+1}.
\end{equation*}
This determines the set of \emph{positive roots}
\begin{equation*}
  R^+ \;=\; R \cap \bigoplus_{i=1}^{r-1} \mathbb{Z}_{\ge 0} \alpha_i \;=\; \{\varepsilon_i-\varepsilon_j \mid 1\le i<j\le r\}.
\end{equation*}
The \emph{highest root} is $\theta = \varepsilon_1-\varepsilon_r = \alpha_1+\alpha_2+\cdots+\alpha_{r-1}$.

For any root $\alpha \in R$, the corresponding \emph{coroot} is the vector defined by $\alpha^\vee = \frac{2\alpha}{(\alpha,\alpha)}$.
We identify the space $E$ with its dual $E^*$ via the inner product $(\cdot, \cdot)$.  Then, for any $x \in E$ and coroot $\alpha^\vee \in E$ (viewed as an element of the dual), the natural evaluation pairing is given by $\langle \alpha^\vee,x \rangle= (\alpha^\vee,x) = \frac{2(\alpha,x)}{(\alpha,\alpha)}$.

In type $A_{r-1}$, we have $(\alpha,\alpha)=2$ for all roots $\alpha\in R$. Consequently $\alpha^\vee=\alpha$, and the pairing simplifies to $\langle \alpha^\vee,x\rangle = (\alpha,x)$. 

The \emph{root lattice} is $Q=\bigoplus_{i=1}^{r-1}\mathbb{Z}\alpha_i$ and the \emph{fundamental weights} $\Lambda_1,\dots,\Lambda_{r-1}\in E$ are characterized by
\begin{equation*}
  \langle \alpha_i^\vee,\Lambda_j \rangle = \delta_{ij}.
\end{equation*}
The \emph{(integral) weight lattice} is $P=\bigoplus_{i=1}^{r-1}\mathbb{Z}\Lambda_i$ and an element of $P$ is referred as a \emph{weight}.

The set of \emph{dominant weights} is
\begin{equation*}
  P^+ \;=\; \{\lambda\in P \mid \langle \alpha_i^\vee,\lambda\rangle\ge0\ \text{for all }1\le i\le r-1\} \;=\; \bigoplus_{i=1}^{r-1}\mathbb{Z}_{\ge 0}\Lambda_i.
\end{equation*}
We refer to $P^+$ as the \emph{dominant weight lattice}. 

The \emph{Weyl vector} $\rho$ is the sum of the fundamental weights: $\rho=\sum_{i=1}^{r-1} \Lambda_i$. 
Equivalently, $\rho$ is equal to the half-sum of the positive roots; see \cite[Section 13.3]{humphreys-lie-algebra}.
\subsection{The geometric map $\Omega$}\label{subsec:map-omega}
We introduce a map from partitions to the weight lattice.
Recall that, for a partition $\lambda$ with $\ell(\lambda) \le r$, the $r$-beta numbers are given by $\beta_i(\lambda) = \lambda_i + r - i,  (1\le i\le r)$.

We define the map $\Omega$ by:
\[
    \Omega(\lambda) \;=\; \sum_{i=1}^{r-1} (\lambda_i - \lambda_{i+1} + 1) \Lambda_i\in P^+.
\]
The coefficients correspond precisely to the consecutive differences of the beta numbers:
\[
    \beta_i(\lambda) - \beta_{i+1}(\lambda) \;=\; (\lambda_i + r - i) - \big(\lambda_{i+1} + r - (i+1)\big) \;=\; \lambda_i - \lambda_{i+1} + 1.
\]
Thus, the map can be written in terms of beta numbers as:
\[
    \Omega(\lambda) \;=\; \sum_{i=1}^{r-1}\big(\beta_i(\lambda) - \beta_{i+1}(\lambda)\big) \Lambda_i.
\]
This map allows us to visualize partitions as points in the dominant weight lattice $P^+$. 
\subsection{Alcove geometry}
\subsubsection{Affine hyperplanes and alcoves}
Fix $r\ge 2$ and use type $A_{r-1}$  notations from \autoref{subsec:type-A-data}.

The \emph{affine Weyl group} of type $\Aone[r-1]$ is
$W \;=\; W_0 \ltimes Q$,
where $W_0 \cong \mathfrak{S}_r$ is the finite Weyl group of type $A_{r-1}$ acting on $E$ by permuting the coordinates, and $Q$ is the root lattice acting on $E$ by translations.  The affine Weyl group is a Coxeter group with generators $\sigma_0,\sigma_1,\cdots,\sigma_{r-1}$ where $\langle \sigma_1,\cdots,\sigma_{r-1}\rangle\cong \Sym_r$. See \cite[Chapter 4]{humphreys-reflection-groups} for details.

For $\alpha\in R$ and $k\in\mathbb{Z}$, define the affine hyperplane
\[
  H_{\alpha,k}=\{x\in E\mid \langle \alpha^\vee,x\rangle=k\}.
\]
This hyperplane separates $E$ into two open half-spaces: the \emph{positive side} $H_{\alpha,k}^+$ and the \emph{negative side} $H_{\alpha,k}^-$, defined by:
\begin{align*}
    H_{\alpha,k}^+ = \{x\in E \mid \langle \alpha^\vee,x\rangle > k\}, \qquad
    H_{\alpha,k}^- = \{x\in E \mid \langle \alpha^\vee,x\rangle < k\}.
\end{align*}

The set $\mathcal{H}=\{H_{\alpha,k}\mid\alpha\in R^+,k\in\Z\}$ is called the \emph{affine Coxeter arrangement}. The \emph{alcoves} are the connected components of the complement of the union of these hyperplanes, that is, of $E \setminus \bigcup_{H \in \mathcal{H}} H$.

A \emph{facet} of an alcove $\mathcal{A}$ is a codimension-one face of its closure $\overline{\mathcal{A}}$. Geometrically, it is the intersection of $\overline{\mathcal{A}}$ with a single hyperplane $H \in \mathcal{H}$ that forms part of the boundary of the alcove; this hyperplane is called a \emph{wall} of the alcove. 

The \emph{dominant chamber} is
\[
  \mathcal{C}^+=\{x\in E\mid \langle \alpha_i^\vee,x\rangle\ge0\ \text{ for all } i\}.
\]
An alcove contained in the dominant chamber is called a \emph{dominant alcove}. The \emph{fundamental alcove} is 
\[
    \mathcal{A}_0=\bigl\{x\in E \,\big|\, 0<\langle \alpha_i^\vee,x\rangle<1\ (1\le i\le r-1),\ \langle \theta^\vee,x\rangle<1\bigr\},
\]
where $\theta$ is the highest root. In particular, $\mathcal{A}_0$ is a dominant alcove.

Writing elements of $W$ as $w=w_0 t_\gamma$ with $w_0\in W_0$ and $\gamma\in Q$, the standard action of $W$ on $E$ is
\[
    w \cdot x \;=\; w_0(x) + \gamma, \qquad x \in E.
\]
This action preserves the affine Coxeter arrangement, and hence induces a simply transitive action on the set of alcoves \cite[Theorem 4.5]{humphreys-reflection-groups}.
\subsubsection{The level-$e$ action}\label{subsubsec:level-e-action}
Fix $e\in\Z_{\ge 2}$. The \emph{level-$e$ action} of $W$ on $E$ is defined by dilating the translation part by a factor of $e$:
\[
  w\star_e x \;=\; w_0(x) + e\gamma,\qquad x\in E.
\]
Taking $e=1$ recovers the standard action described above. Define the \emph{level-$e$ affine Coxeter arrangement} $\mathcal{H}^{(e)}$ by
\[
  \mathcal{H}^{(e)}
  :=
  \{H^{(e)}_{\alpha,k} \mid \alpha \in R^+,\, k \in \mathbb{Z}\},
  \qquad
  H^{(e)}_{\alpha,k}
  :=
  \{x \in E \mid \langle \alpha^\vee, x\rangle = ke\}.
\]
The connected components of $E \setminus \bigcup_{H\in\mathcal{H}^{(e)}}H$ are called \emph{$e$-alcoves}; those contained in the dominant chamber are called \emph{dominant $e$-alcoves}. The \emph{fundamental $e$-alcove} is
\[
  \mathcal{A}_0^{(e)}
  =
  \bigl\{
    x \in E
    \,\big|\,
    0 < \langle \alpha_i^\vee,x\rangle < e \ (1 \le i \le r-1),\
    \langle \theta^\vee,x\rangle < e
  \bigr\}.
\]
The level-$e$ action preserves $\mathcal{H}^{(e)}$, hence acts on the set of $e$-alcoves. This action is also simply transitive.
\subsubsection{Shi coefficients}
A convenient way to encode the position of an $e$-alcove is via \emph{Shi coefficients}, introduced in \cite{shi-alcove-shi-coefficients}. For $c\in\mathbb{R}$, the \emph{floor function} $\lfloor c \rfloor$ is the greatest integer less than or equal to $c$.

For any $e$-alcove $\mathcal{A}$ and positive root $\alpha \in R^+$, the value $\lfloor \langle \alpha^\vee, x \rangle / e \rfloor$ is constant for all $x \in \mathcal{A}$.
The \emph{level-$e$ Shi coefficient} of $\mathcal{A}$ with respect to $\alpha$ is:
\begin{equation*}
    k_\alpha^{(e)}(\mathcal{A}) \;:=\; \left\lfloor \frac{\langle \alpha^\vee, x \rangle}{e} \right\rfloor \qquad (\forall x \in \mathcal{A}).
\end{equation*}
The collection of integers $\{k_\alpha^{(e)}(\mathcal{A}) \mid \alpha \in R^+\}$ uniquely determines the $e$-alcove $\mathcal{A}$.
Specifically, two $e$-alcoves $\mathcal{A}$ and $\mathcal{A}'$ are the same if and only if
\begin{equation}\label{eq:shi-condition}
    k_\alpha^{(e)}(\mathcal{A})=k_\alpha^{(e)}(\mathcal{A}') \quad \text{for all } \alpha \in R^+.
\end{equation}

\subsubsection{Reindexing the anti-spherical KL polynomials}\label{subsubsec:relabel-antispherical-kl}
Recall from \autoref{subsec:parabolic-kl} that the anti-spherical Kazhdan–Lusztig polynomials $\mathfrak{n}_{y,x}(v)$ are indexed by elements $x, y \in {}^{I}W$.
For $e\in\Z_{\ge 2}$, the map
\[
  x \longmapsto \mathcal{A}_x := x \star_e \mathcal{A}_0^{(e)}
\]
defines a bijection between ${}^{I}W$ and the set of dominant $e$-alcoves; see \cite[Section 4]{soergel-kazhdan-lusztig-tilting}.
Thus, we may view the polynomials $\mathfrak{n}_{y,x}(v)$ as being indexed by pairs of dominant $e$-alcoves via the identification $\mathfrak{n}_{\mathcal{A}_y, \mathcal{A}_x}(v) := \mathfrak{n}_{y,x}(v)$.

We further relabel these polynomials using dominant weights. Following \cite[Section 5]{goodmanwenzl-crystal-bases}, we associate a unique dominant $e$-alcove $\mathcal{A}(\lambda)$ to every dominant weight $\lambda \in P^+$ as follows:
\begin{itemize}
  \item If $\lambda$ lies in the interior of a dominant $e$–alcove $\mathcal{A}$ (i.e., $\lambda \in \mathcal{A}$), set $\mathcal{A}(\lambda)=\mathcal{A}$.
  \item If $\lambda$ lies on some hyperplanes $H^{(e)}_{\alpha,k}$, let $\mathcal{A}(\lambda)$ be the unique dominant $e$–alcove such that $\lambda$ lies in its closure $\overline{\mathcal{A}(\lambda)}$ and $\lambda$ lies on the positive side of every hyperplane separating $\mathcal{A}(\lambda)$ from the origin.
\end{itemize}

For any pair of dominant weights $\mu, \lambda \in P^+$, we define the \emph{level-$e$ anti-spherical Kazhdan--Lusztig polynomial} denoted by $\mathfrak{n}^e_{\mu, \lambda}(v)$:
\begin{equation*}
  \mathfrak{n}^e_{\mu,\lambda}(v)\;:=\; \begin{cases}
      \mathfrak{n}_{\mathcal{A}(\mu),\mathcal{A}(\lambda)}(v) & \text{if } \mu \text{ and } \lambda \text{ are in the same } W\text{-orbit under } \star_e, \\
      0 & \text{otherwise.}
  \end{cases}
\end{equation*}
This allows us to work with anti-spherical Kazhdan–Lusztig polynomials indexed directly by dominant weights (and hence partitions via $\Omega$). We include the superscript $e$ to emphasize that the value depends on the level of the affine action used to define the $e$-alcoves.
See \autoref{fig:level3} and \autoref{fig:level4} for examples of associating a dominant weight to the corresponding $e$-alcove; we have shaded the $e$-alcove using a lighter shade of the color used for the point representing the dominant weight.
\subsection{Graded decomposition numbers}\label{subsec:decomposition-numbers}
Let $\mathbb{k}$ be a field, and let $\mathcal{H}_n=\mathcal{H}_{\mathbb{k},q}(\mathfrak{S}_n)$ be the Iwahori--Hecke algebra of $\mathfrak{S}_n$ over $\mathbb{k}$ with parameter $q$. Equivalently, $\mathcal{H}_n$ is the specialization of the Hecke algebra of type $A_{n-1}$; see \cite[Section 1.1]{bw-soergel-calculus} for details.

The algebra $\mathcal{H}_n$ has a distinguished family of modules $S^\lambda$, called \emph{Specht modules}, indexed by the partitions of $n$. If $q\in\mathbb{k}^\times$ has finite multiplicative order $e$, then the simple $\mathcal{H}_n$-modules are precisely the nonzero quotients $D^\mu$ of the Specht modules $S^\mu$, where $\mu$ runs over the $e$-regular partitions of $n$. The \emph{decomposition numbers} are the multiplicities $[S^\lambda:D^\mu]\in\mathbb{Z}_{\geq 0}$ of the simple module $D^\mu$ as a composition factor of the Specht module $S^\lambda$. We refer to \cite[Section 2.2 \& Chapter 6]{mathas-iwahori-hecke} for more details.

By works of \cite{khovanovlauda-klr-1,bkw-graded-specht,bk-graded-decomposition-number}, Specht modules and simple modules admit graded lifts, denoted by $\widetilde{S}^\lambda$ and $\widetilde{D}^\mu$, respectively. The \emph{graded decomposition numbers} are polynomials in $\mathbb{Z}[v, v^{-1}]$:
\[
  d^e_{\lambda\mu}(v)\;=\;\sum_{d\in\mathbb{Z}} [\widetilde{S}^\lambda:\widetilde{D}^\mu\langle d\rangle]\,v^d,
\]
where $\langle\cdot\rangle$ denotes the grading shift. Specializing $v=1$ gives $d^e_{\lambda\mu}(1)=[S^\lambda:D^\mu]$.

A fundamental feature of the representation theory of $\mathcal{H}_n$ is its decomposition into a direct sum of indecomposable two-sided ideals, called \emph{blocks}. We say that two partitions $\lambda$ and $\mu$ lie in the same block if the corresponding Specht modules $S^\lambda$ and $S^\mu$ belong to the same block of the algebra (i.e., all their composition factors lie in the same block).
By \cite[Corollary 5.38]{mathas-iwahori-hecke}, $\lambda$ and $\mu$ lie in the same block if and only if
\[
    \operatorname{core}_e(\lambda) \;=\; \operatorname{core}_e(\mu) \quad \text{and} \quad w_e(\lambda) \;=\; w_e(\mu).
\]
This implies that $d^e_{\lambda\mu}(v) = 0$ unless $\lambda$ and $\mu$ share the same $e$-core and $e$-weight.

\subsection{Empty runner removal theorem}
Let $e\in\Z_{\ge 2}$ and $n\in\Z_{\ge 0}$. Let $\lambda$ and $\mu$ be two partitions of $n$ lying in the same block.
Choose $r\ge \max\{\ell(\lambda),\ell(\mu)\}$ and form the abaci $\operatorname{Ab}_{e}^{r}(\lambda)$ and $\operatorname{Ab}_{e}^{r}(\mu)$.

Fix $j\in\{0,1,\dots,e-1\}$ and insert a new empty runner immediately to the left of the $j$--runner. The resulting abacus configurations yield new partitions $\lambda^+$ and $\mu^+$.

\begin{Theorem}[\cite{jamesmathas-empty-runner-removal}, Theorem 3.2]\label{thm:empty-runner-removal}
    In characteristic $0$, suppose $\lambda$ and $\mu$ lie in the same block and $\mu$ is $e$–regular.
    Form $\lambda^+$ and $\mu^+$ as above, then
    \[
      d^{e}_{\lambda,\mu}(v) \;=\; d^{e+1}_{\lambda^+,\mu^+}(v).
    \]
\end{Theorem}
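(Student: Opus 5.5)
We will deduce the theorem from the Goodman--Wenzl identification together with alcove geometry. By \cite[Theorem 5.3]{goodmanwenzl-crystal-bases}, combined with \cite{bk-graded-decomposition-number}, for partitions with at most $r$ parts we have
\[
d^e_{\lambda\mu}(v)=\mathfrak{n}^e_{\Omega(\mu),\Omega(\lambda)}(v).
\]
This is an anti-spherical KL polynomial for the affine Weyl group of type $A^{(1)}_{r-1}$ at level $e$. Inserting an empty runner leaves the number of beads unchanged, so $\lambda^+,\mu^+$ are read off abaci with $r$ beads. Hence both sides of the theorem are level-$e$ and level-$(e+1)$ anti-spherical KL polynomials for the \emph{same} Coxeter group $W$. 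It therefore suffices to exhibit $x,y\in{}^IW$ with
\[
\mathcal{A}(\Omega(\lambda))=\mathcal{A}_x,\quad \mathcal{A}(\Omega(\mu))=\mathcal{A}_y \ \text{at level } e,
\qquad
\mathcal{A}(\Omega(\lambda^+))=\mathcal{A}_x,\quad \mathcal{A}(\Omega(\mu^+))=\mathcal{A}_y \ \text{at level } e+1.
\]
Once this holds, $\mathfrak{n}_{y,x}$ is literally the same polynomial on both sides. The orbit condition is automatic, since same block means same core and weight, and the runner insertion preserves this.

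\textbf{Step 1: effect of runner insertion on beta numbers.} Write $\beta_i=a_ie+b_i$. After inserting an empty runner to the left of the $j$-runner, the new position is
\[
\beta_i^+=a_i(e+1)+b_i+[b_i\ge j].
\]
For $i<k$, the pairing $\langle(\varepsilon_i-\varepsilon_k)^\vee,\Omega(\lambda)\rangle=\beta_i-\beta_k$. The level-$e$ Shi coefficient of this root is therefore $\lfloor(\beta_i-\beta_k)/e\rfloor$, which equals $a_i-a_k$ or $a_i-a_k-1$ according to whether $b_i\ge b_k$. The same computation at level $e+1$ gives $a_i-a_k$ or $a_i-a_k-1$ according to whether $b_i+[b_i\ge j]\ge b_k+[b_k\ge j]$. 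The map $b\mapsto b+[b\ge j]$ is strictly increasing, so this comparison agrees with $b_i\ge b_k$. Hence
\[
\Bigl\lfloor\frac{\beta_i-\beta_k}{e}\Bigr\rfloor=\Bigl\lfloor\frac{\beta_i^+-\beta_k^+}{e+1}\Bigr\rfloor .
\]
So all Shi coefficients agree, and also the walls on which the weight lies agree: $\beta_i-\beta_k\equiv0\pmod e$ iff $b_i=b_k$ iff $\beta^+_i-\beta^+_k\equiv0\pmod{e+1}$.

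\textbf{Step 2: the alcoves match.} We must show that the Goodman--Wenzl alcove $\mathcal{A}(\cdot)$ is given by the same Shi coefficients at both levels, including the boundary rule. For a weight on walls, $\mathcal{A}(\lambda)$ is the alcove whose Shi coefficient for each root $\alpha$ equals $\lfloor\langle\alpha^\vee,\lambda\rangle/e\rfloor$ when $\lambda$ is on the wall. This uses the convention that the weight lies on the positive side, i.e.\ the alcove lies above the wall away from the origin. Thus $\mathcal{A}(\lambda)$ is determined by the floors computed in Step 1. I will verify this description of $\mathcal{A}(\lambda)$ via a small perturbation $\lambda+\epsilon\rho$. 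By \eqref{eq:shi-condition} and Step 1, the level-$e$ alcove of $\Omega(\lambda)$ and the level-$(e+1)$ alcove of $\Omega(\lambda^+)$ have identical Shi coefficient vectors.

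\textbf{Step 3: identical Shi vectors give the same group element.} The Shi vectors of the $e$-alcove $x\star_e\mathcal{A}_0^{(e)}$ and of the $(e+1)$-alcove $x\star_{e+1}\mathcal{A}_0^{(e+1)}$ coincide for every $x\in W$, because rescaling $E$ by $(e+1)/e$ conjugates the two actions and preserves floors of normalized pairings. So the same $x$ labels both alcoves, and likewise $y$ for $\mu$. Combined with the Goodman--Wenzl formula, this yields $d^e_{\lambda\mu}=d^{e+1}_{\lambda^+\mu^+}$.

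The main obstacle is Step 2. The boundary convention of Goodman--Wenzl must be matched precisely with the floor description, and the theorem's hypotheses (characteristic zero, $\mu$ being $e$-regular) must be checked to give exactly the hypotheses of \cite[Theorem 5.3]{goodmanwenzl-crystal-bases}. In particular I will need that $\mu^+$ is $(e+1)$-regular; this should follow since runner insertion does not create new runs of consecutive beads. I will also check that the sign and shift conventions ($\rho$-shift, transpose versus not) are the same at both levels.
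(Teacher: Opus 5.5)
Your proposal is correct and is essentially the paper's proof. Both reduce via Goodman--Wenzl to anti-spherical KL polynomials for the same affine Weyl group, then show that the level-$e$ Shi coefficients of $\Omega(\lambda)$ agree with the level-$(e+1)$ Shi coefficients of $\Omega(\lambda^+)$ using $\beta_i^+=a_i(e+1)+b_i+[b_i\ge j]$; your monotonicity of $b\mapsto b+[b\ge j]$ just replaces the paper's case split $b_i\ge b_j$ versus $b_i<b_j$. Your extra checks make explicit what the paper leaves implicit: the walls through the two points coincide, so $\mathcal{A}(\cdot)$ matches under either closure convention; the rescaling argument shows equal Shi vectors give the same $x\in{}^IW$; and the orbit condition holds. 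The one slip is harmless: the paper's formula is $d^e_{\lambda\mu}=\mathfrak{n}^e_{\Omega(\lambda),\Omega(\mu)}$, with indices in the opposite order to yours.
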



\begin{Remark}
    The condition that $\mu$ is $e$-regular is necessary only in the context of the Iwahori–Hecke algebra (where simple modules are indexed by $e$-regular partitions). \autoref{thm:empty-runner-removal} was originally established in the context of $q$-Schur algebras, where simple modules exist for all partitions and the regularity condition can be dropped.
\end{Remark}

As noted in \cite{jamesmathas-empty-runner-removal}, Goodman suggested that \autoref{thm:empty-runner-removal} can be deduced from the following theorem.

\begin{Theorem}[\cite{goodmanwenzl-crystal-bases}]\label{thm:goodman-wenzl}
    In characteristic $0$, let $\lambda$ and $\mu$ be two partitions of the same size with no more than $r$ rows, and assume $\mu$ is $e$-regular. Then the graded decomposition number is given by the anti-spherical Kazhdan–Lusztig polynomial evaluated at the corresponding weights:
    \[
    d^e_{\lambda\mu}(v) \;=\; \mathfrak{n}^e_{\Omega(\lambda),\Omega(\mu)}(v).
    \]
\end{Theorem}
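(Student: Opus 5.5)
The plan is to obtain the statement by composing three known identifications, and to track conventions carefully at each step. The route is: graded decomposition numbers, then canonical bases of the level-one Fock space, then canonical bases of a finite $q$-wedge space, then anti-spherical Kazhdan–Lusztig polynomials of the affine Weyl group of type $\Aone[r-1]$ at level $e$.

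\textbf{Step 1 (Fock space).} By the graded form of the Lascoux–Leclerc–Thibon conjecture, proved by Ariki and lifted to the graded setting by Brundan–Kleshchev \cite{bk-graded-decomposition-number}, $d^e_{\lambda\mu}(v)$ is the coefficient of the standard vector $|\lambda\rangle$ in the canonical basis vector $G(\mu)$ of the level-one Fock space of $U_v(\widehat{\mathfrak{sl}}_e)$, for $\mu$ $e$-regular. The bar involution there is unitriangular with respect to the dominance order. Moreover, by a theorem of Leclerc–Thibon, the Fock space is the limit of finite $q$-wedge spaces $\Lambda^r V$, where $V$ is the level-zero evaluation module. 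Under this limit the semi-infinite wedge attached to $\lambda$ truncates to the $r$-fold wedge $u_{\beta_1}\wedge\cdots\wedge u_{\beta_r}$ on the $r$-beta numbers. The coefficients $d^e_{\lambda\mu}(v)$ with $\ell(\lambda),\ell(\mu)\le r$ can then be read off from the canonical basis of $\Lambda^rV$. This uses the fact that straightening never increases the number of rows, so the relevant coefficients stabilise once $r\ge\ell(\lambda),\ell(\mu)$.

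\textbf{Step 2 (wedges as an anti-spherical module).} The space $\Lambda^rV$ is, by quantum affine Schur–Weyl duality, the sign-induced module of the extended affine Hecke algebra of $\mathfrak S_r$ acting on $V^{\otimes r}$. Concretely, the strictly decreasing $r$-tuples $\beta$ are identified with integral points of $\mathbb{Z}^r$. The finite symmetric group permutes coordinates, and translations act by $e$ on the beta coordinates. I would then do the following.
\begin{enumerate}
\item Fix $\sum\beta_i$, which is constant on the relevant block because $|\lambda|=|\mu|$, and project to $E$. This reduces the extended affine Weyl group to $W$ acting by $\star_e$.
\item Observe that the projection of $\beta$ is exactly $\Omega(\lambda)$, since the coefficients of $\Omega(\lambda)$ are the differences $\beta_i-\beta_{i+1}$.
\item Show that $u_\beta\mapsto N_x$ intertwines the two bar involutions and is compatible with the triangularity. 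Here $x\in{}^IW$ is determined by $\mathcal{A}(\Omega(\lambda))=\mathcal{A}_x$, and $I$ is the stabiliser of the relevant singular facet.
\end{enumerate}
Uniqueness of the canonical basis then forces $G(\mu)\mapsto\underline{N}_{x_\mu}$. Hence $d^e_{\lambda\mu}(v)=\mathfrak n_{\mathcal A(\Omega(\lambda)),\mathcal A(\Omega(\mu))}(v)$. Partitions in different $\star_e$-orbits lie in different blocks, so both sides vanish together, and the case split in the definition of $\mathfrak n^e$ is consistent.

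\textbf{Main obstacle.} The hard part is Step 2 for weights lying on walls, i.e.\ when some $\beta_i-\beta_j\equiv0\pmod e$. There the wedge straightening rules produce singular (parabolic with a nontrivial finite stabiliser) behaviour. I would first verify that the straightening relation for $u_a\wedge u_b$ with $a\equiv b$ matches the sign-module relation $H_s\mapsto -v$. I would then check that the choice of $\mathcal{A}(\lambda)$ made following \cite{goodmanwenzl-crystal-bases} (the alcove lying on the positive side of all separating walls) is exactly the one singled out by the standard-wedge normal form. A further place where errors are likely is matching $v$ versus $v^{-1}$ between the Soergel normalisation $\mathfrak n_{y,x}\in v\mathbb Z[v]$ and the grading of $\widetilde S^\lambda$. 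I would fix this by checking a single small example, for instance $r=2$, where both sides are computed directly.
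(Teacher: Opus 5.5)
The paper does not prove this statement itself: it is quoted from \cite{goodmanwenzl-crystal-bases}, with the graded form supplied by \cite{bk-graded-decomposition-number}. Your Steps 1--2 follow the standard route behind that citation.

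As a proof, however, Step 2 has a genuine gap in the singular case. These are the blocks in which some runner carries at least two beads, so that $\Omega(\lambda)$ lies on walls $H^{(e)}_{\alpha,k}$.

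\begin{itemize}
\item You take $I$ to be the stabiliser of the singular facet. In $\mathfrak n^e$ the parabolic is fixed once and for all as $I=\{\sigma_1,\dots,\sigma_{r-1}\}$, the stabiliser of the origin. This is what the $q$-antisymmetrisation produces, and what makes ${}^IW$ biject with dominant alcoves.
\item The stabiliser $W_J$ of the facet of $\Omega(\lambda)$ enters on the other side. Up to the anti-involution $w\mapsto w^{-1}$, the relevant weight space of $\Lambda^rV$ is $\operatorname{sgn}_I\otimes_{\mathcal H_I}\mathcal H(W)\otimes_{\mathcal H_J}(\text{a one-dimensional module})$. Its standard basis is indexed by cosets $xW_J$ with $x\in{}^IW$, that is, by points rather than alcoves.
\item Consequently $u_\beta\mapsto N_x$ is not a bar-equivariant identification with $\mathcal N_I$, and uniqueness of canonical bases cannot be applied to it as stated.
\end{itemize}

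The missing ingredient is a comparison lemma. It should express the canonical basis of this singular space through the anti-spherical polynomials $\mathfrak n_{y,x}$ at specific representatives of the cosets. That lemma, together with the normalisation of the wedge basis, is what singles out $\mathcal A(\cdot)$; the normal form of a wedge only determines the point, not an alcove.

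Your proposed test also points the wrong way. The equal-residue rule $u_a\wedge u_b=-u_b\wedge u_a$ ($a<b$, $a\equiv b \pmod{e}$) carries no power of $v$, so it cannot match $H_s\mapsto -v$. It is the $I$-sign relation composed with the relation imposed by $W_J$. For instance, with $H_t\mapsto v^{-1}$ on $\mathcal H_J$, the scalars $-v$ and $v^{-1}$ cancel.

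There is also a smaller correction in Step 1: straightening does increase the number of rows. For $e=2$, $G((2))=|(2)\rangle+v\,|(1,1)\rangle$. What you need is the reverse fact: $\nu\trianglelefteq\lambda$ with $|\nu|=|\lambda|$ forces $\ell(\nu)\ge\ell(\lambda)$. Hence the span of the $|\nu\rangle$ with $\ell(\nu)>r$ is bar-stable, and the canonical basis descends to the \emph{quotient}. It is this quotient that must then be matched with $\Lambda^rV$.

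For regular blocks (at most one bead per runner, so $W_J$ is trivial) your outline is essentially the standard argument.
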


In \cite[Section 2.3]{fayers-full-runner-removal}, the author briefly describes the idea of how to realize Goodman's remark. However, a detailed proof is not provided there. We provide the proof in \autoref{sec:proof-empty}.
\section{Patterns of dominant weights}\label{sec:pattern-dominant-weights}
In \autoref{subsec:map-omega}, we defined the map $\Omega$ which sends partitions to the dominant weight lattice $P^+$. Since the blocks of the Hecke algebra are parametrized by pairs consisting of an $e$–core and an $e$–weight (see \autoref{subsec:core-quotient} and \autoref{subsec:decomposition-numbers}), it is natural to investigate the geometric pattern of the dominant weights corresponding to partitions with a fixed $e$-weight. In this section, we describe and analyze these patterns.

\begin{Definition}\label{def:generic-triple}
    A \emph{generic triple} is a triple $\triple\in\Z_{\ge 0}$ such that $e>r\ge 3$.
\end{Definition}

For any generic triple $\triple$, let $\mathscr{P}_{r,e,w}$ denote the set of partitions with at most $r$ parts and $e$–weight $w$. In particular, $\mathscr{P}_{r,e,0}$ consists of the $e$–core partitions of length at most $r$.

Let $P^+$ be the dominant weight lattice of $\mathfrak{sl}_r$. The corresponding set of dominant weights of $\mathscr{P}_{r,e,w}$ under $\Omega$ is:
\begin{equation*}
    \mathcal{W}_{r,e,w} \;:=\; \Omega(\mathscr{P}_{r,e,w}) \;\subset\; P^+.
\end{equation*}
Our primary goal is to characterize the set $\mathcal{W}_{r,e,w}$.
\begin{Example}
    We draw $\mathcal{W}_{3,10,8}$ in the dominant weight lattice in \autoref{fig:r3-e10-w8-black}, where each black ball represents a dominant weight in $\mathcal{W}_{3,10,8}$.
    \begin{figure}[htbp]
        \centering
        \captionsetup{type=figure}
        \includegraphics[height=6cm,keepaspectratio]{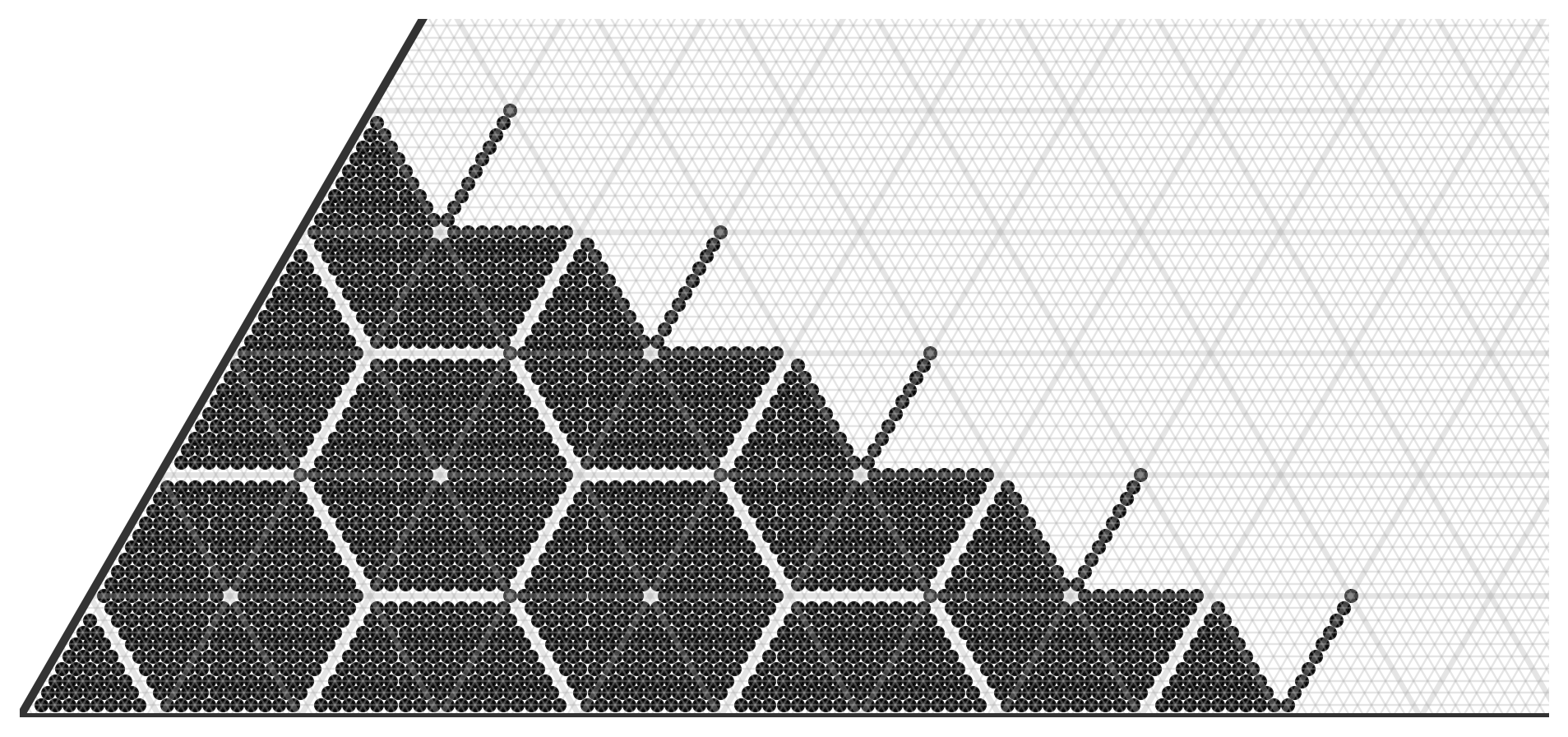}
        \caption{$r=3$, $e=10$, $w=8$}
        \label{fig:r3-e10-w8-black}
    \end{figure}
\end{Example}
\subsection{Simplicial structure of $e$-cores}
We start with the case of $e$-cores (i.e., $w=0$). To simplify notation, we define:
\[
    \mathscr{P}_{r,e} \;:=\; \mathscr{P}_{r,e,0} \quad \text{and} \quad \mathcal{W}_{r,e} \;:=\; \mathcal{W}_{r,e,0}.
\]
Thus $\mathscr{P}_{r,e}$ is the set of $e$-core partitions with at most $r$ parts, and $\mathcal{W}_{r,e} = \Omega(\mathscr{P}_{r,e})$ is the corresponding set of dominant weights. We fix a generic triple $\triple[r][e][0]$ throughout this section.

For each composition $\mu=(\mu_1,\dots,\mu_j)$ of $r$ (this implies $1\le j\leq r$), we define a subset of partitions $\mathscr{P}_{r,e}(\mu) \subset \mathscr{P}_{r,e}$ as follows. Consider the family of $e$–abaci obtained by choosing $j$ distinct runners $0\le s_1<\cdots<s_j\le e-1$ and placing $\mu_i$ beads on the $s_i$–runner (as high as possible) for each $i=1,\dots,j$, while leaving the remaining runners empty. Each such abacus corresponds to a unique $e$-core partition; let $\mathscr{P}_{r,e}(\mu)$ be the set of all such partitions and define the corresponding set of dominant weights by:
\begin{equation*}
    \mathcal{W}_{r,e}(\mu) \;:=\; \Omega(\mathscr{P}_{r,e}(\mu)).
\end{equation*}

Since every $e$-core partition with at most $r$ parts corresponds to an $e$-abacus with exactly $r$ beads placed on some flush runners, we have the following decomposition:
\begin{equation*}
    \mathscr{P}_{r,e} \;=\; \bigsqcup_{\mu \models r} \mathscr{P}_{r,e}(\mu)
    \quad \text{and} \quad
    \mathcal{W}_{r,e} \;=\; \bigsqcup_{\mu \models r} \mathcal{W}_{r,e}(\mu),
\end{equation*}

Before stating the main result of this subsection, we formalize our geometric terminology.

\begin{Definition}\label{def:simplex-lattice-points}
    Let $d \ge 0$ and $L > 0$ be integers. We define the \emph{standard $d$-dimensional simplex of dilation factor $L$} to be the set:
    \[
        \Delta^d(L) \;=\; \bigl\{ (x_0, \dots, x_d) \in \mathbb{R}^{d+1}_{\ge 0} \;\big|\; x_0 + \dots + x_d = L \bigr\}.
    \]
    The \emph{lattice points} of $\Delta^d(L)$ are the points in $\Delta^d(L) \cap \mathbb{Z}^{d+1}$.
    
\end{Definition}

\begin{Definition}\label{def:affine-linear-map}
    Let $V$ and $W$ be vector spaces. A map $\phi: V \to W$ is called \emph{affine linear} if there exists a linear transformation $T: V \to W$ and a constant vector $c \in W$ such that
    \[
        \phi(v) \;=\; T(v) + c \qquad \text{for all } v \in V.
    \]
\end{Definition}

In the context of the weight lattice $P$, we say that a subset $S \subset P$ forms the \emph{lattice points of a $d$-dimensional simplex of dilation factor $L$} if there exists an injective affine linear map $\phi: \mathbb{R}^{d+1} \to P \otimes \mathbb{R}$ such that $S = \phi(\Delta^d(L) \cap \mathbb{Z}^{d+1})$.

We require the following lemma to handle the translation-invariance of the abacus under $\Omega$.

\begin{Lemma}\label{lm:omega-shift-invariance}
    Let $\beta=(\beta_1,\dots,\beta_r)$ and $\gamma=(\gamma_1,\dots,\gamma_r)$ be beta numbers. The map $\Omega$, viewed as a function of the beta numbers via $\Omega(\beta) = \sum_{i=1}^{r-1} (\beta_i - \beta_{i+1}) \Lambda_i$, is only invariant under global shift. That is,
    $\Omega(\beta)=\Omega(\gamma)$
    if and only if there exists an integer $k \in \mathbb{Z}$ such that $\gamma_i = \beta_i + k$ for all $1 \le i \le r$.
\end{Lemma}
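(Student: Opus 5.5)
The plan is to use the fact that the fundamental weights $\Lambda_1,\dots,\Lambda_{r-1}$ form a basis of $P$, so equality of two weights can be tested coefficient by coefficient. Both directions then reduce to a statement about consecutive differences of the two sequences.

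For the \emph{if} direction, assume $\gamma_i=\beta_i+k$ for all $i$. Then $\gamma_i-\gamma_{i+1}=\beta_i-\beta_{i+1}$ for every $1\le i\le r-1$, because the constant $k$ cancels. Hence the coefficients of each $\Lambda_i$ in $\Omega(\gamma)$ and $\Omega(\beta)$ agree, and $\Omega(\gamma)=\Omega(\beta)$.

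For the \emph{only if} direction, suppose $\Omega(\beta)=\Omega(\gamma)$. Since the $\Lambda_i$ are linearly independent (they form a $\mathbb{Z}$-basis of $P$ by definition of $P$), comparing coefficients gives
\[
\beta_i-\beta_{i+1}=\gamma_i-\gamma_{i+1}\qquad (1\le i\le r-1).
\]
Rearranging, $\gamma_i-\beta_i=\gamma_{i+1}-\beta_{i+1}$ for each such $i$. Chaining these equalities from $i=1$ up to $i=r-1$ shows that $\gamma_i-\beta_i$ is independent of $i$. Setting $k:=\gamma_1-\beta_1\in\mathbb{Z}$ then gives $\gamma_i=\beta_i+k$ for all $i$, as required.

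There is essentially no obstacle here. The only point to make explicit is the linear independence of the fundamental weights, which follows from $\langle\alpha_i^\vee,\Lambda_j\rangle=\delta_{ij}$: pairing a vanishing linear combination of the $\Lambda_j$ with $\alpha_i^\vee$ extracts its $i$-th coefficient. It is also worth remarking that the statement holds for arbitrary integer $r$-tuples, not only strictly decreasing ones. When the lemma is applied to abaci, the shift $k$ may be negative, provided the shifted sequence remains non-negative.
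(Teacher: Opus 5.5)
Your proof is correct and follows the paper's argument exactly: the forward direction is the cancellation of $k$ in consecutive differences, and the converse matches coefficients of $\Lambda_1,\dots,\Lambda_{r-1}$ and concludes that $\gamma_i-\beta_i$ is constant. Your explicit justification of linear independence via $\langle\alpha_i^\vee,\Lambda_j\rangle=\delta_{ij}$ is a small step that the paper leaves implicit.
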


\begin{proof}
    If $\gamma_i = \beta_i + k$ for all $i$, then $\gamma_i - \gamma_{i+1} = (\beta_i + k) - (\beta_{i+1} + k) = \beta_i - \beta_{i+1}$. Since $\Omega$ depends only on these differences, $\Omega(\gamma) = \Omega(\beta)$.
    
    Conversely, if $\Omega(\gamma)=\Omega(\beta)$, then the coefficients in the fundamental weight basis must match:
    \[
        \gamma_i-\gamma_{i+1} \;=\; \beta_i-\beta_{i+1} \quad (1\le i\le r-1).
    \]
    This implies $\gamma_i - \beta_i = \gamma_{i+1} - \beta_{i+1}$ for all $i$. The difference $\gamma_i - \beta_i$ is a constant $k$ independent of $i$, proving the claim.
\end{proof}

\begin{Theorem}\label{thm:simplex-of-core-weights}
    Fix a generic triple $\triple[r][e][0]$, and let $\mu$ be a composition of $r$ of length $j\, (1\le j\le r)$. Then $\mathcal{W}_{r,e}(\mu)$ forms the lattice points of a $(j-1)$–dimensional simplex of dilation factor $e-j$.
\end{Theorem}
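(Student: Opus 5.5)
The plan is to write down an explicit parametrization of the abaci defining $\mathscr{P}_{r,e}(\mu)$ by gap sizes between runners, and to check that $\Omega$ becomes affine linear in these gaps once translations are accounted for via \autoref{lm:omega-shift-invariance}.

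\textbf{Parametrization.} An element of $\mathscr{P}_{r,e}(\mu)$ is determined by runners $0\le s_1<\dots<s_j\le e-1$. Its bead set is
\[
B(s)=\{ae+s_i \mid 1\le i\le j,\ 0\le a<\mu_i\}.
\]
Define the cyclic gaps
\[
y_0=s_1+(e-1-s_j),\qquad y_i=s_{i+1}-s_i-1 \quad (1\le i\le j-1).
\]
These are non-negative integers with $y_0+\dots+y_{j-1}=e-j$. Conversely, for every $y\in\Delta^{j-1}(e-j)\cap\Z^{j}$ the choice
\[
s_1=0,\qquad s_{i+1}=i+y_1+\dots+y_i
\]
gives $s_j=j-1+(e-j-y_0)\le e-1$, so it is a valid configuration. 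A general configuration with the same $(y_1,\dots,y_{j-1})$ is the translate $B(s)+s_1$ of this one, and it has the same $\Omega$ by \autoref{lm:omega-shift-invariance}. So the map
\[
\phi\colon y\longmapsto \Omega\bigl(B(s(y))\bigr)
\]
is well defined and surjective from $\Delta^{j-1}(e-j)\cap\Z^{j}$ onto $\mathcal{W}_{r,e}(\mu)$.

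\textbf{Affine linearity.} Since $s_1<\dots<s_j<e$, the decreasing order of $B(s)$ is lexicographic in (row $a$, runner index $i$), read backwards. This ordering depends only on $\mu$, not on $s$. Hence each consecutive difference $\beta_k-\beta_{k+1}$ takes one of two forms:
\begin{itemize}
\item $s_{i'}-s_i$ for beads in the same row, with $i<i'$ the adjacent runners present in that row; this equals a sum of $(y_l+1)$;
\item $e+s_{i}-s_{i'}$ across rows, with $i$ the first runner present in one row and $i'$ the last runner present in the row above; this equals $y_0+1$ plus a sum of further $(y_l+1)$.
\end{itemize}
In both cases the difference is affine in $y$, with $y_0$ entering explicitly. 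Therefore $\phi$ extends to an affine linear map $\mathbb{R}^{j}\to P\otimes\mathbb{R}$.

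\textbf{Injectivity.} This is where I expect the real care to be needed. Row $0$ contains beads on all $j$ runners, so the differences $y_1+1,\dots,y_{j-1}+1$ all occur among the coefficients of $\Omega$. Thus the restriction of $\phi$ to the affine hyperplane $\sum y_i=e-j$ is injective, and this already gives injectivity on the lattice points $\Delta^{j-1}(e-j)\cap\Z^{j}$, where $y_0$ is determined by the other coordinates.

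Injectivity of the extension on all of $\mathbb{R}^{j}$, as the definition literally asks, is a different matter. When some row $a\ge1$ is occupied, $y_0$ also appears through a cross-row difference. But when $j=r$ (all $\mu_i=1$) there are only $r-1$ coefficients, so no injective affine map $\mathbb{R}^{r}\to P\otimes\mathbb{R}$ exists. I would therefore read the definition as requiring injectivity on the affine span of $\Delta^{j-1}(e-j)$, equivalently precompose with an identification $\mathbb{R}^{j-1}\cong\{\sum y_i=e-j\}$, and note this explicitly.

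The remaining step is to confirm that the translation classes used are exactly the fibres of $\Omega$ within $\mathscr{P}_{r,e}(\mu)$, which follows from the injectivity just argued together with \autoref{lm:omega-shift-invariance}. Genericity ($e>r\ge j$) ensures $e-j>0$, so the dilation factor is positive.
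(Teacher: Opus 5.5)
Your proposal is correct and follows essentially the paper's proof: a gap parametrization, normalization to $s_1=0$ via \autoref{lm:omega-shift-invariance}, consecutive beta-number differences affine in the gaps, and injectivity read off from the row-$0$ differences $y_i+1$. Your cyclic gap $y_0$ is just the paper's $g_j$ after it sets $g_0=0$. Your remark that injectivity must be taken on the affine span of the simplex is right: for $j=r$ there is no injective affine map $\mathbb{R}^r\to P\otimes\mathbb{R}$, so this is a point the paper's definition glosses over.
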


\begin{proof}
    Fix the composition $\mu$. Partitions in $\mathscr{P}_{r,e}(\mu)$ are determined by the positions of the $j$ non-empty runners, denoted $0 \le s_1 < s_2 < \dots < s_j \le e-1$.
    
    We define the \emph{gap variables} $g_0, \dots, g_{j}$ representing the spacing between non-empty runners:
    \begin{align*}
        g_0 &= s_1, \\
        g_k &= s_{k+1} - s_k - 1 \quad \text{for } 1 \le k \le j-1, \\
        g_j &= e - 1 - s_j.
    \end{align*}
    These variables satisfy $g_k \in \mathbb{Z}_{\ge 0}$ and $\sum_{k=0}^j g_k = e - j$.
    
    By \autoref{lm:omega-shift-invariance}, the map $\Omega$ is invariant under global shift. Hence we may shift all beads to the left by $s_1$, which does not change the image. With this normalization, $s_1=0=g_0$.

    The remaining variables $\mathbf{g} = (g_1, \dots, g_j)$ satisfy $\sum_{k=1}^j g_k = e-j$, corresponding to the lattice points of the standard simplex $\Delta^{j-1}(e-j)$. It remains to show that $\Omega$ induces an injective, affine linear map on this domain.
    
    Let $\kappa \in \mathscr{P}_{r,e}(\mu)$ be the core partition corresponding to $\mathbf{g}$ and let $\beta_i$ be the corresponding beta numbers of $\kappa$. The corresponding dominant weight is $\Omega(\kappa) = \sum_{i=1}^{r-1} (\beta_i - \beta_{i+1}) \Lambda_i$. 
    Suppose the beads $\beta_i$ and $\beta_{i+1}$ lie on the chosen runners indexed by  $s_u$ and $s_v$, respectively, where $1 \le u,v \le j$. Then
    \begin{equation}\label{eq:beta-diff}
        \beta_i - \beta_{i+1} \;=\; 
        \begin{dcases}
            u - v + \sum_{v \le t < u} g_t & \text{if } u > v \text{ (same rows),} \\
            j + u - v + \sum_{v \le t \le j} g_t + \sum_{1 \le t < u} g_t & \text{if } u \leq v \text{ (different rows).}
        \end{dcases}
    \end{equation}
    In both cases, $\beta_i - \beta_{i+1}$ is an affine linear function of the variables $\mathbf{g}$. Thus, $\Omega$ is an affine linear map in the sense of \autoref{def:affine-linear-map}.
    
    To verify injectivity, consider two distinct elements $(g_0,g_1, \dots, g_j)=\mathbf{g} \neq \mathbf{g}'=(g_0', g_1', \dots, g_j')$ in the domain. We may normalize as above, and hence $g_0=g_0'=0$. Let $k = \min \{1\le t \le j\mid g_t \neq g_t' \}$. Consider the top beads located on the $k$-th non-empty runner and the ${k+1}$-th non-empty runner. By \autoref{eq:beta-diff}, the difference between their values is $g_k+1$ and $g_k'+1$, respectively.
    Thus the coordinate associated with this pair differs, implying $\Omega(\mathbf{g}) \neq \Omega(\mathbf{g}')$.
\end{proof}

\begin{Remark}
    We do not need the assumption $r< e$ to show that $\mathcal{W}_{r,e}(\mu)$ is a simplex. However, without this condition the claim that $\mathcal{W}_{r,e}(\mu)$ has dimension $j-1$ may fail. For example, if $\ell(\mu)=e<r$, then $\mathcal{W}_{r,e}(\mu)$ collapses to a single point; see \autoref{eg:dimension-collaps}. 
\end{Remark}

\begin{Example}\label{eg:dimension-collaps}
    Consider the case $\mathfrak{sl}_3$ (so $r=3$) with $e=2$. Then $|\mathcal{W}_{3,2}(\mu)|=1$ for $\mu\in\{(2,1),(1,2)\}$, and $\mathcal{W}_{3,2}(1,1,1)=\varnothing$. See \autoref{fig:degenerate-sl3-e2w0}. Similarly, see \autoref{fig:degenerate-sl3-e3w0} for the case $\mathfrak{sl}_3$ with $e=3$ where $|\mathcal{W}_{3,3}(1,1,1)|=1$.
    
    \begin{figure}[htbp]
      \centering
      \begin{minipage}[t]{0.49\textwidth}
        \centering
        \captionsetup{type=figure}
        \includegraphics[height=4cm,keepaspectratio]{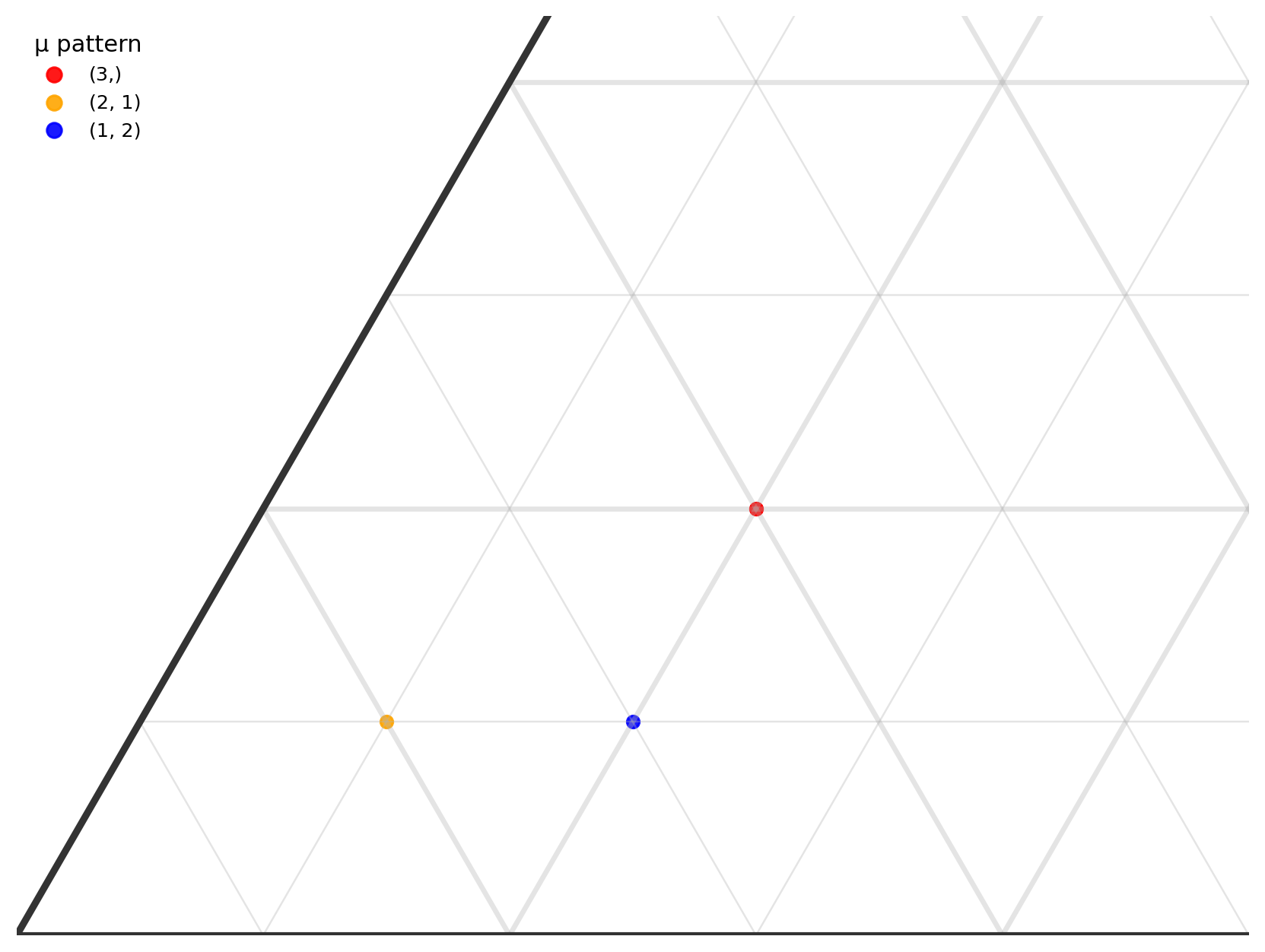}
        \caption{$r=3$, $e=2$, $w=0$}
        \label{fig:degenerate-sl3-e2w0}
      \end{minipage}\hfill
      \begin{minipage}[t]{0.49\textwidth}
        \centering
        \captionsetup{type=figure}
        \includegraphics[height=4cm,keepaspectratio]{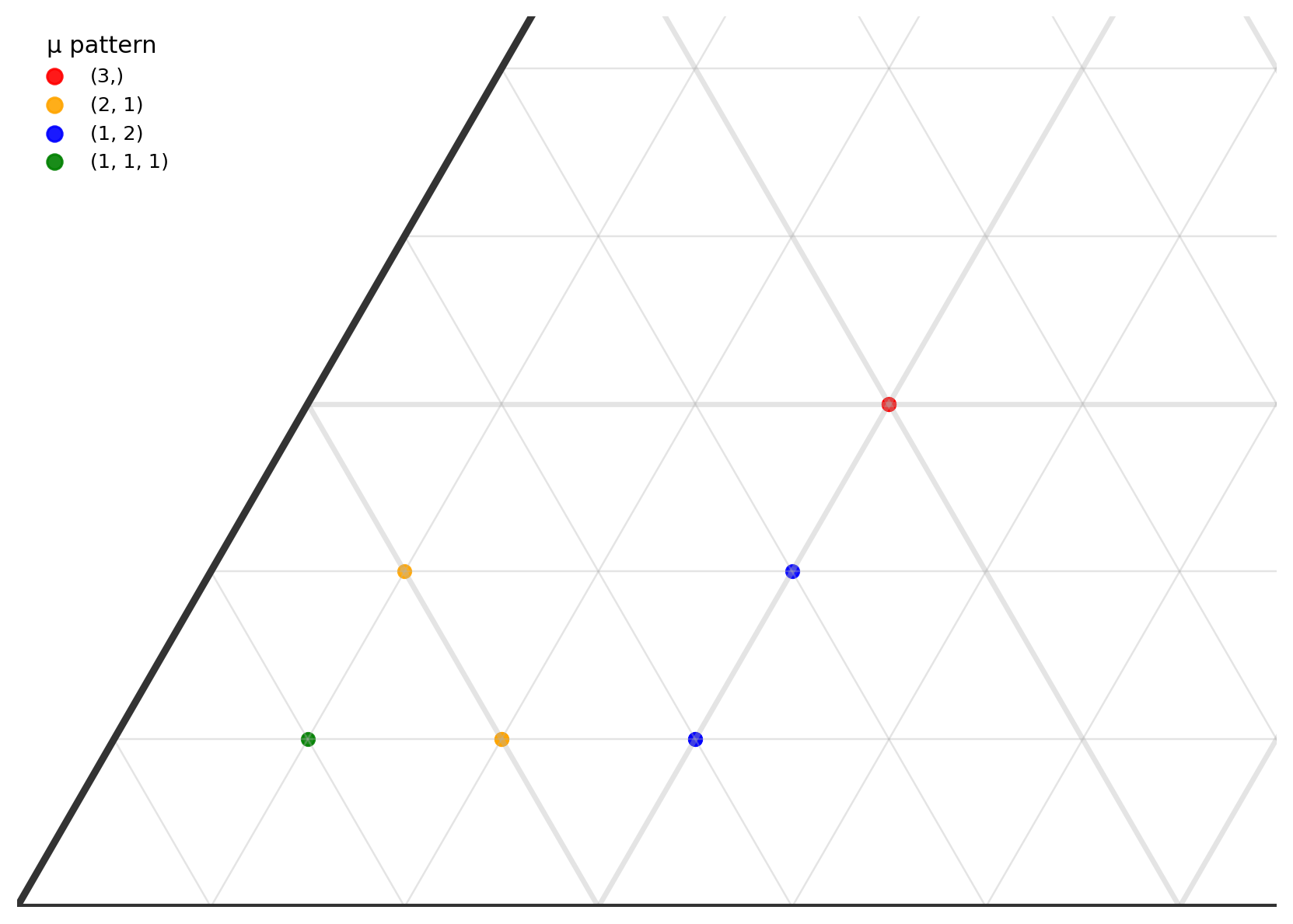}
        \caption{$r=3$, $e=3$, $w=0$}
        \label{fig:degenerate-sl3-e3w0}
      \end{minipage}
    \end{figure}
\end{Example}
\subsection{Simplicial Decomposition of \texorpdfstring{$\mathcal{W}_{r,e,w}$}{Wr,e,w}}\label{subsec:decomposition-w}
In this section, we fix a generic triple $\triple$. Our aim is to generalize the case $w=0$ to the case of arbitrary weight $w \in \mathbb{N}$.
For a fixed composition $\mu \models r$ of length $j$, define a subset of $\mathscr{P}_{r,e,w}$ as follows:
\begin{equation*}
    \mathscr{P}_{r,e,w}(\mu) \;:=\; \bigl\{ \lambda \in \mathscr{P}_{r,e,w} \;\mid\; \operatorname{core}_e(\lambda) \in \mathscr{P}_{r,e}(\mu) \bigr\}.
\end{equation*}

For any partition $\lambda \in \mathscr{P}_{r,e,w}(\mu)$, the $e$-quotient (see \autoref{eq:e-quotient}) encodes the vertical moves of beads relative to the $e$-core to have $e$-weight $w$. Since the core lies in $\mathscr{P}_{r,e}(\mu)$, the beads of $\lambda$ are supported on exactly $j$ runners. Consequently, the $e$-quotient has empty entries corresponding to the other empty runners. We define the \emph{restricted $e$-quotient} of $\lambda$ to be the subsequence of the $e$-quotient corresponding to these $j$ non-empty runners, denoted by the tuple $(\lambda^{(1)}, \dots, \lambda^{(j)})$. Since the $i$-th non-empty runner (counting from left to right) carries exactly $\mu_i$ beads, the corresponding partition $\lambda^{(i)}$ must satisfy the length constraint $\ell(\lambda^{(i)}) \le \mu_i$.

Recall from \autoref{subsec:multipartitions} that these tuples correspond to the $j$-partitions of $w$ of type $\mu$, and $A(\mu; w)$ is the number of such $j$-partitions. Set $\mathcal{W}_{r,e,w}(\mu):=\Omega\big(\mathscr{P}_{r,e,w}(\mu)\big)$. Then we have the decomposition:
\begin{equation*}
    \mathscr{P}_{r,e,w} \;=\; \bigsqcup_{\mu \models r} \mathscr{P}_{r,e,w}(\mu)
    \quad \text{and} \quad
    \mathcal{W}_{r,e,w} \;=\; \bigsqcup_{\mu \models r} \mathcal{W}_{r,e,w}(\mu).
\end{equation*}

\begin{Theorem}[\textbf{Simplicial Decomposition}]\label{thm:simplicial-decomposition}
    Fix a generic triple $\triple$. Let $\mu$ and $\nu$ be distinct compositions of $r$. Then
    \begin{enumerate}
        \item\label{itm:W-disjoint}
        $\mathcal{W}_{r,e,w}(\mu)\cap \mathcal{W}_{r,e,w}(\nu)=\varnothing$.
        
        \item\label{itm:W-copies}
        The set $\mathcal{W}_{r,e,w}(\mu)$ is a disjoint union of copies of the simplex $\mathcal{W}_{r,e}(\mu)$, and the number of copies is exactly $A(\mu;w)$.
    \end{enumerate}
\end{Theorem}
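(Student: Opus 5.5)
The plan is to parametrize $\mathscr{P}_{r,e,w}(\mu)$ by pairs $(\mathbf g,\blam)$. Here $\mathbf g=(g_0,\dots,g_j)$ are the gap variables from the proof of \autoref{thm:simplex-of-core-weights}, recording the positions $s_1<\dots<s_j$ of the non-empty runners. The second entry $\blam=(\blam^{(1)}\mid\dots\mid\blam^{(j)})$ is the restricted $e$-quotient, a $j$-partition of $w$ of type $\mu$. Since a partition is determined by its core and quotient, $(\mathbf g,\blam)\mapsto\lambda$ is a bijection onto $\mathscr{P}_{r,e,w}(\mu)$. For fixed $\blam$ I would show that $\mathbf g\mapsto\Omega(\lambda)$ has image a copy of $\mathcal{W}_{r,e}(\mu)$. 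For part \eqref{itm:W-disjoint}, and for the disjointness of the $A(\mu;w)$ copies, I would analyse exactly when two abaci can have the same $\Omega$, using \autoref{lm:omega-shift-invariance}.

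\emph{Step 1 (each fixed quotient gives a copy of the simplex).} Fix $\blam$. Each bead of $\lambda$ sits on the $u$-th non-empty runner in some row $a$, so its beta number is $ae+s_u$. The relative order of the beads is lexicographic in $(a,u)$ and does not depend on $\mathbf g$, because $0\le s_u\le e-1$. Hence the bead in position $i$ has a fixed label $(a(i),u(i))$, and
\[
\beta_i-\beta_{i+1}=(a(i)-a(i+1))e+s_{u(i)}-s_{u(i+1)}.
\]
This is affine in $\mathbf g$, exactly as in \eqref{eq:beta-diff}. View $\Omega$ as the class of $(\beta_1,\dots,\beta_r)$ in $\mathbb{R}^r/\mathbb{R}(1,\dots,1)\cong E$. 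Its $\mathbf g$-dependent part is the vector $(s_{u(1)},\dots,s_{u(r)})$, and this vector is a permutation of the corresponding vector for the core, since both contain $s_u$ exactly $\mu_u$ times. Therefore the image is obtained from $\mathcal{W}_{r,e}(\mu)$ by an element of $W_0\cong\Sym_r$ (an isometry of $E$) followed by a translation. That is a copy of the simplex; injectivity in $\mathbf g$ is inherited from \autoref{thm:simplex-of-core-weights}.

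\emph{Step 2 (when two abaci have equal $\Omega$).} Let $X$ and $X'$ be the bead sets of $\lambda,\lambda'\in\mathscr{P}_{r,e,w}$ with $\Omega(\lambda)=\Omega(\lambda')$. By \autoref{lm:omega-shift-invariance}, $X'=X+k$; after swapping the two we may assume $k\ge0$, and we write $k=me+k_0$ with $0\le k_0<e$. Shifting by $me$ only moves every bead down $m$ rows. This leaves the core's composition, the runner positions and the quotient unchanged, and it does not change the weight: that weight is $\sum_b\bigl(\sum(\text{rows on runner }b)-\binom{n_b}{2}\bigr)$, but the corresponding core then has $r$ beads in shifted positions, so I must check that the weight computation is genuinely translation-invariant in rows as well. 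I would instead handle it uniformly with the rows formula. Shifting by $k_0$ sends runner $b$ to runner $b+k_0$ in the same row when $b<e-k_0$. It sends runner $b$ to runner $b+k_0-e$ one row lower when $b\ge e-k_0$. No two runners merge, so each runner keeps its bead count $n_b$, and the weight changes by $\sum_{b\ge e-k_0}n_b$, together with the analogous contribution of the $me$ part. Equality of weights then forces the wrapping contributions to vanish. In that case the shift is a pure translation of non-empty runners to the right without wrap-around, so the left-to-right sequence of bead counts, which is the composition $\mu$, is preserved, and so is the restricted quotient. This proves \eqref{itm:W-disjoint}. It also shows that the copies for distinct $\blam$ are disjoint, and that they are distinct as sets of weights, so there are exactly $A(\mu;w)$ of them.

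\emph{Main obstacle.} I expect Step 2 to be the hardest part, specifically the bookkeeping of how the $e$-weight (computed with exactly $r$ beads) changes under a global shift. I would first try the row-sum formula for the weight. If the normalization with respect to the $me$ part turns out to be delicate, the fallback is to fix the representative of each translation class with smallest bead at position in $\{0,\dots,e-1\}$ and compare only such normalized configurations.
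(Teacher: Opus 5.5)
Your proposal is correct and follows essentially the paper's route (the global-shift lemma, the split $k=me+k_0$, and the $e$-weight to exclude both row shifts and wrap-around), and your Step~1 permutation argument supplies a justification the paper only asserts: each fixed restricted quotient yields a set of the form $\pi(\mathcal{W}_{r,e}(\mu))+t$ with $\pi\in W_0$. Delete the sentence claiming that shifting by $me$ preserves the weight and the quotient, since it raises the weight by $mr$ and replaces each quotient component on a runner with $n_b$ beads by $\lambda^{(b)}+(m^{n_b})$; it is your row-sum formula, giving a weight change of $mr+\sum_{b\ge e-k_0}n_b\ge 0$, that forces $m=0$ and no wrap-around (and your conclusion that a non-wrapping horizontal shift is then harmless is more accurate than the paper's final claim that $k=0$).
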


\begin{proof}
    \textbf{(a)}
    Take $\lambda\in \mathscr{P}_{r,e,w}(\mu)$ and $\eta\in \mathscr{P}_{r,e,w}(\nu)$ with $\Omega(\lambda)=\Omega(\eta)$. Let
    \[
        \beta(\lambda)=(\beta_1,\dots,\beta_r),\qquad
        \beta(\eta)=(\beta_1',\dots,\beta_r')
    \]
    be the corresponding $r$-beta numbers in decreasing order.
    By \autoref{lm:omega-shift-invariance}, there exists an integer $k$ such that $\beta_i'=\beta_i+k$ for all $1\le i\le r$.

    This corresponds to a global shift of all runners to the right by $k$ (or to the left by $-k$), which does not change the number of non-empty runners. Hence $\ell(\mu)=\ell(\nu)$, and $\nu$ is obtained from $\mu$ by a cyclic permutation of its parts:
    \[
        \nu=(\mu_i,\mu_{i+1},\dots,\mu_j,\mu_1,\dots,\mu_{i-1})
    \]
    for some $1\le i\le j$.
    
    If $i=1$ then $\nu=\mu$, as desired. Suppose $i\neq 1$.
    A nontrivial cyclic shift moves at least one non-empty runner from the rightmost to the left, which strictly increases the the $e$-weight.
    Since both $\lambda$ and $\eta$ lie in $\mathscr{P}_{r,e,w}$ and have the same $e$-weight $w$, this is impossible. Therefore $i=1$ and $\mu=\nu$.

    \medskip
    \textbf{(b)}
    Fix the composition $\mu$ of length $j$. Then by \autoref{thm:simplex-of-core-weights} $\mathcal{W}_{r,e}(\mu)$ is a $(j-1)$-dimensional simplex where the degrees of freedom come from choosing the runner gaps.
    For $w\ge0$, any $\lambda\in \mathscr{P}_{r,e,w}(\mu)$ is obtained from some core $\lambda^0\in \mathscr{P}_{r,e}(\mu)$ by moving beads downwards on the chosen $j$ runners, and the moves are recorded by the restricted $e$-quotient, which is a $j$-partition of $w$ of type $\mu$. 
    
    Thus each $j$-partition of $w$ of type $\mu$ determines a copy of the simplex $\mathcal{W}_{r,e}(\mu)$ inside $\mathcal{W}_{r,e,w}(\mu)$. Since there are $A(\mu;w)$ such $j$-partitions, there are $A(\mu;w)$ such copies.
    
    To see they are disjoint, let $\lambda,\eta\in \mathscr{P}_{r,e,w}(\mu)$ define the same dominant weight. Let $\beta_i(\lambda)$ and $\beta_i(\eta)$ be their beta numbers. By \autoref{lm:omega-shift-invariance}, there exists some integer $k$ such that $\beta_i(\eta)=\beta_i(\lambda)+k$ for all $1\leq i\leq r$. Write $k=qe+s$ with $0\le s<e$. We may assume $q\geq 0$.
    
    First, consider the term $qe$. Increasing a beta number by $e$ corresponds to sliding a bead down one row on its runner. If we increase every bead $\beta_i(\lambda)$ by $qe$, then the row index of every bead increase by $q$. Since there are $r$ beads in total, this operation increases the total $e$-weight by $q \cdot r$. Since $\lambda$ and $\eta$ both have fixed $e$-weight $w$, we must have $q=0$.
    
    Now we are left with the term $k=s$ where $0 \le s < e$. This corresponds to a cyclic shift of the runner indices by $s$. However, by definition of the simplex $\mathcal{W}_{r,e,0}(\mu)$, we have fixed the runner configuration (specifically, we normalized the first non-empty runner to be at position $0$ or fixed the gap variables). As argued in part (a), such a shift strictly changes the $e$-weight unless $s=0$.
    Thus $k=0$ and $\lambda=\eta$. Therefore, the copies are pairwise disjoint.
\end{proof}

\begin{Example}\label{eg:simplicial-decomposition-r3e9}
    In \autoref{fig:r3e8-grid-0-9}, we plot the sets $\mathcal{W}_{3,10,w}$ for weights $w$ ranging from $0$ to $9$. This figure is drawn in the dominant weight lattice of $\mathfrak{sl}_3$, where each colored ball represents a dominant weight in $\mathcal{W}_{3,10,w}$. We use distinct colors to distinguish the subsets $\mathcal{W}_{3,10,w}(\mu)$ arising from the simplicial decomposition, corresponding to the types $\mu\in\{(3),(2,1),(1,2),(1,1,1)\}$.
\end{Example}
\begin{Example}\label{eg:labels-of-simplicial-composition}
    For $\triple=(3,8,8)$, consider $W_{r,e,w}(\mu)$ with $\mu=(1,1,1)$. We label each copy of $W_{r,e}(\mu)$ by the corresponding $3$–partition of type $\mu$, see \autoref{fig:labels-of-simplicial-composition-1}. Similarly, see \autoref{fig:labels-of-simplicial-composition-2} for the case $(r,e,w)=(3,12,10)$.
    \begin{figure}[htbp]
        \centering
        \captionsetup{type=figure}
        \includegraphics[height=5cm,keepaspectratio]{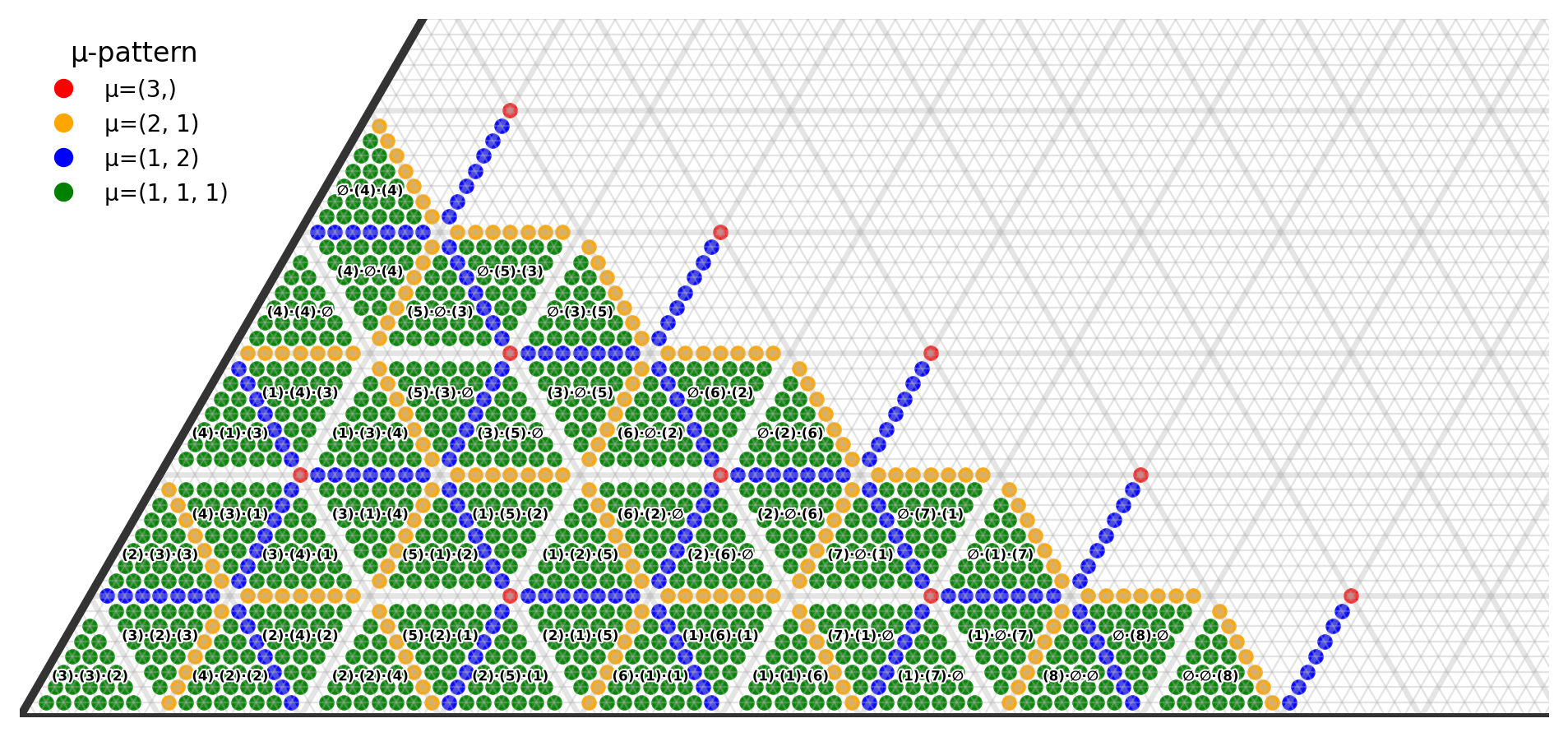}
        \caption{$r=3$, $e=8$, $w=8$}
        \label{fig:labels-of-simplicial-composition-1}
    \end{figure}
    \begin{figure}[htbp]
        \centering
        \captionsetup{type=figure}
        \includegraphics[height=5cm,keepaspectratio]{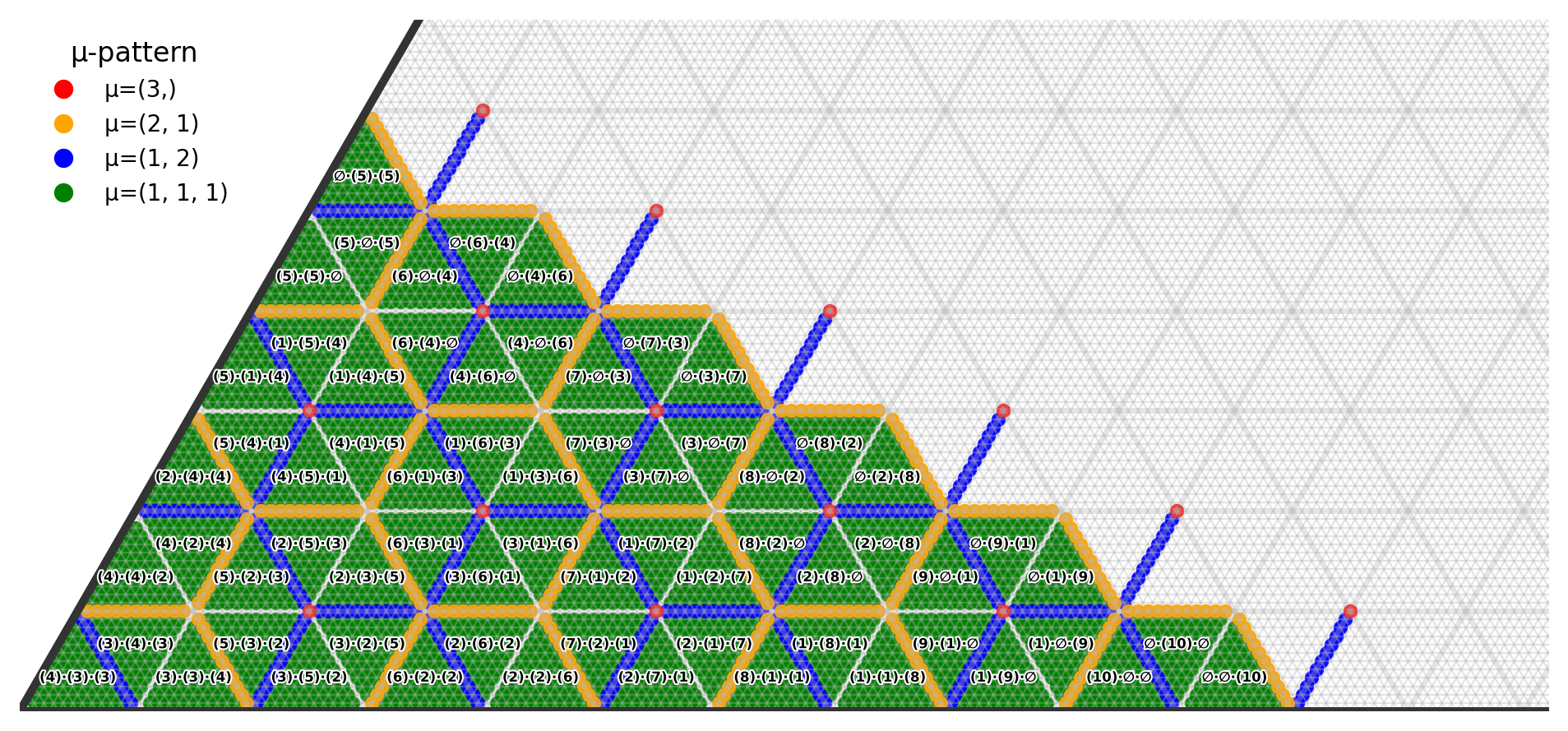}
        \caption{$r=3$, $e=12$, $w=10$}
        \label{fig:labels-of-simplicial-composition-2}
    \end{figure}
\end{Example}
\begin{Corollary}\label{cor:simplex-points-counting}
    The number of lattice points in a $(j-1)$-dimensional simplex of dilation factor $e-j$ is given by the binomial coefficient $\binom{e-1}{j-1}$.
\end{Corollary}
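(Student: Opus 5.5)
By \autoref{def:simplex-lattice-points}, a $(j-1)$-dimensional simplex of dilation factor $e-j$ is the image of $\Delta^{j-1}(e-j)\cap\mathbb{Z}^{j}$ under an injective affine linear map. Since the map is injective, it suffices to count
\[
  \#\bigl\{(x_1,\dots,x_j)\in\mathbb{Z}_{\ge 0}^{j} \;\big|\; x_1+\cdots+x_j=e-j\bigr\}.
\]
This is the number of weak compositions of $e-j$ with $j$ parts. By stars and bars it equals $\binom{(e-j)+j-1}{j-1}=\binom{e-1}{j-1}$.

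The count can also be phrased in the paper's abacus language, using the proof of \autoref{thm:simplex-of-core-weights}. There the lattice points of $\mathcal{W}_{r,e}(\mu)$ correspond bijectively to the normalized gap vectors $(g_1,\dots,g_j)$ with $g_0=0$, which is the same as fixing $s_1=0$. Choosing such a gap vector amounts to choosing the remaining runner positions $0<s_2<\cdots<s_j\le e-1$, i.e.\ a $(j-1)$-subset of $\{1,\dots,e-1\}$. This again gives $\binom{e-1}{j-1}$ points. The two descriptions are matched by $x_k\mapsto g_k$.

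The only step that needs any care is that the count is of the abstract lattice points and does not depend on the chosen embedding. This follows at once from injectivity of $\phi$, which is part of the definition and was verified for $\mathcal{W}_{r,e}(\mu)$ in \autoref{thm:simplex-of-core-weights}. The identification of $\Delta^{j-1}(e-j)\cap\mathbb{Z}^j$ with $(j-1)$-subsets of $\{1,\dots,e-1\}$ is also immediate: send $(x_1,\dots,x_j)$ to the partial sums $x_1+\cdots+x_k+k$ for $1\le k\le j-1$.
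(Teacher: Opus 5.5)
Your proof is correct and follows the paper's argument: stars and bars on the $j$ normalized gap variables summing to $e-j$, plus the alternative count of choosing $s_2<\cdots<s_j$ from $\{1,\dots,e-1\}$ once $s_1=0$ is fixed. Your explicit partial-sum bijection and the injectivity remark are harmless additions, though strictly only injectivity on the lattice points of $x_1+\cdots+x_j=e-j$ is needed; that is what \autoref{thm:simplex-of-core-weights} actually verifies.
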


\begin{proof}
    By \autoref{thm:simplex-of-core-weights}, the set of weights $\mathcal{W}_{r,e,0}(\mu)$ corresponds bijectively to the set of gap variable tuples $\mathbf{d}=(d_1, \dots, d_j)$ satisfying $d_i \in \mathbb{Z}_{\ge 0}$ and $\sum_{i=1}^j d_i = e-j$.
    
    This counts the number of non-negative integer solutions to a linear equation, which is a standard stars-and-bars problem. Alternatively, recall that these tuples parametrise the choice of $j$ distinct runners $0=s_1 < s_2 < \dots < s_j \le e-1$. Since the first runner is fixed at $0$, we must choose the remaining $j-1$ distinct indices from the set $\{1, \dots, e-1\}$. The number of such choices is exactly $\binom{e-1}{j-1}$.
\end{proof}
\begin{Corollary}\label{cor:num-of-core-weights}
    For a generic triple $\triple[r][e][0]$, we have
    \[
        |\mathcal{W}_{r,e}| \;=\; \binom{e+r-2}{r-1}.
    \]
\end{Corollary}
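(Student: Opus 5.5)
The plan is to sum the sizes of the pieces in the decomposition $\mathcal{W}_{r,e}=\bigsqcup_{\mu\models r}\mathcal{W}_{r,e}(\mu)$ stated before \autoref{def:simplex-lattice-points}. This union is disjoint: that is the case $w=0$ of \enumref{thm:simplicial-decomposition}{itm:W-disjoint}. So
\[
  |\mathcal{W}_{r,e}| \;=\; \sum_{\mu\models r}|\mathcal{W}_{r,e}(\mu)|.
\]
For each $\mu$ of length $j$, \autoref{thm:simplex-of-core-weights} shows that $\mathcal{W}_{r,e}(\mu)$ is the injective affine image of the lattice points of $\Delta^{j-1}(e-j)$. \autoref{cor:simplex-points-counting} then gives $|\mathcal{W}_{r,e}(\mu)|=\binom{e-1}{j-1}$, which depends only on the length $j$ of $\mu$.

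Next I will group the compositions of $r$ by length. There are $\binom{r-1}{j-1}$ compositions of $r$ with exactly $j$ parts, by stars and bars (choose $j-1$ of the $r-1$ gaps between $r$ units). This gives
\[
  |\mathcal{W}_{r,e}| \;=\; \sum_{j=1}^{r}\binom{r-1}{j-1}\binom{e-1}{j-1}.
\]
The genericity assumption $e>r$ matters here: every $j\le r$ satisfies $j\le e$, so each simplex is nondegenerate and each count $\binom{e-1}{j-1}$ is valid.

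The last step is Vandermonde's identity. Rewrite $\binom{r-1}{j-1}=\binom{r-1}{r-j}$, so the sum becomes $\sum_{i}\binom{r-1}{r-1-i}\binom{e-1}{i}$ with $i=j-1$. This equals $\binom{e+r-2}{r-1}$, as claimed.

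The only real obstacle is justifying that the pieces are disjoint at the level of weights, not merely at the level of partitions, and that each piece has full cardinality. Both points are already supplied by the cited results: part (a) of the simplicial decomposition for disjointness, and the injectivity in \autoref{thm:simplex-of-core-weights} for the cardinality. The rest is routine binomial bookkeeping.

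As a cross-check, I would also verify the formula directly. An $e$-core with at most $r$ parts corresponds to distributing $r$ beads flush among $e$ runners, which is a weak composition $(c_0,\dots,c_{e-1})$ of $r$. Two such configurations give the same weight exactly when they differ by the global shift of \autoref{lm:omega-shift-invariance}. Normalizing so that runner $0$ is non-empty should then yield the same count, since the weak compositions of $r$ of length $e$ with $c_0\ge1$ number $\binom{r-1+e-1}{e-1}=\binom{e+r-2}{r-1}$.
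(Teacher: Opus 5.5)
Your proof is correct and is essentially the paper's own argument: disjointness from \autoref{thm:simplicial-decomposition} at $w=0$, $\binom{e-1}{j-1}$ points per piece by \autoref{cor:simplex-points-counting}, $\binom{r-1}{j-1}$ compositions of $r$ of length $j$, and then Vandermonde's identity. Your cross-check is also sound: while runner $0$ is empty you can shift all beads left and stay flush, and two flush abaci with a bead at position $0$ cannot differ by a nonzero shift, so each shift class has exactly one representative with $c_0\ge 1$. That argument also shows the hypothesis $e>r$ is not needed for the count itself, only for quoting the cited theorems as stated.
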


\begin{proof}
    By \autoref{thm:simplicial-decomposition}, $\mathcal{W}_{r,e}$ is the disjoint union of the sets $\mathcal{W}_{r,e}(\mu)$ as $\mu$ ranges over all compositions of $r$.
    If $\mu$ has length $j$, \autoref{cor:simplex-points-counting} states that $|\mathcal{W}_{r,e}(\mu)| = \binom{e-1}{j-1}$.
    
    The number of compositions of $r$ into exactly $j$ parts is $\binom{r-1}{j-1}$.
    Summing over all possible lengths $1 \le j \le r$, we obtain:
    \begin{align*}
        |\mathcal{W}_{r,e,0}| \;&=\; \sum_{j=1}^{r} \binom{r-1}{j-1}\binom{e-1}{j-1} \\
        &=\; \sum_{k=0}^{r-1} \binom{r-1}{k}\binom{e-1}{k} \qquad \\
        &=\; \sum_{k=0}^{r-1} \binom{r-1}{r-1-k}\binom{e-1}{k} \qquad (\text{symmetry of binomial coeffs}) \\
        &=\; \binom{(r-1)+(e-1)}{r-1},
    \end{align*}
    where the last equality follows from Vandermonde's Identity.
\end{proof}
\begin{Theorem}\label{thm:composition-sum}
    Fix a generic triple $\triple$. Then
    \begin{equation}\label{eq:composition-sum}
        |\mathcal{W}_{r,e,w}| \;=\; \sum_{\mu \models r} A(\mu;w) \binom{e-1}{\ell(\mu)-1},
    \end{equation}
\end{Theorem}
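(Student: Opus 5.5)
The plan is to derive \eqref{eq:composition-sum} directly from the two structural results already in place. First, we use the decomposition $\mathcal{W}_{r,e,w}=\bigsqcup_{\mu\models r}\mathcal{W}_{r,e,w}(\mu)$. Part~\enumref{thm:simplicial-decomposition}{itm:W-disjoint} guarantees this union is genuinely disjoint: distinct compositions never contribute the same weight. Consequently
\[
|\mathcal{W}_{r,e,w}|=\sum_{\mu\models r}|\mathcal{W}_{r,e,w}(\mu)|,
\]
so it suffices to compute each summand.

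Next, fix $\mu\models r$ of length $j$. By part~\enumref{thm:simplicial-decomposition}{itm:W-copies}, $\mathcal{W}_{r,e,w}(\mu)$ is a disjoint union of exactly $A(\mu;w)$ copies of the simplex $\mathcal{W}_{r,e}(\mu)$. Here ``copy'' should be made precise. For a fixed restricted quotient $(\lambda^{(1)},\dots,\lambda^{(j)})$ of type $\mu$, the map from gap variables $\mathbf g$ to $\Omega(\lambda)$ is a bijection onto its image, and that image has the same cardinality as $\mathcal{W}_{r,e}(\mu)$. Concretely, $\lambda$ is determined by its core and quotient, and the core ranges over $\mathscr{P}_{r,e}(\mu)$ indexed by $\mathbf g$. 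Injectivity of $\mathbf g\mapsto\Omega(\lambda)$ follows exactly as in the disjointness argument of part (b): equal $\Omega$-values force a global shift $k$ of beta numbers, which must vanish. Hence $|\mathcal{W}_{r,e,w}(\mu)|=A(\mu;w)\,|\mathcal{W}_{r,e}(\mu)|$.

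Finally, \autoref{cor:simplex-points-counting} together with \autoref{thm:simplex-of-core-weights} gives $|\mathcal{W}_{r,e}(\mu)|=\binom{e-1}{j-1}=\binom{e-1}{\ell(\mu)-1}$. Substituting into the sum yields \eqref{eq:composition-sum}. As a sanity check, at $w=0$ we have $A(\mu;0)=1$, so the formula recovers \autoref{cor:num-of-core-weights}.

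The only real obstacle is the bookkeeping in the middle step: ensuring that each copy has exactly $\binom{e-1}{j-1}$ points, with no collapse, and that distinct quotients give disjoint copies. Both facts reduce to \autoref{lm:omega-shift-invariance} plus the weight-counting argument already made in the proof of \autoref{thm:simplicial-decomposition}(b). I would cite that argument rather than redo it, noting that it applies to any pair $\lambda,\eta\in\mathscr{P}_{r,e,w}(\mu)$, whether they share a quotient or not.
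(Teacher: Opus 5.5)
Your proposal is correct and is essentially the paper's proof. You get disjointness over $\mu\models r$ from \enumref{thm:simplicial-decomposition}{itm:W-disjoint}, exactly $A(\mu;w)$ copies from \enumref{thm:simplicial-decomposition}{itm:W-copies}, and $\binom{e-1}{\ell(\mu)-1}$ points per copy from \autoref{cor:simplex-points-counting}. Your added injectivity remark holds only once the gap variables are normalized so that the first non-empty runner sits at position $0$, as in \autoref{thm:simplex-of-core-weights}; without this, a shift into empty trailing runners preserves both the weight and the quotient, so the shift $k$ need not vanish.
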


\begin{proof}
    By \autoref{thm:simplicial-decomposition}(\ref{itm:W-disjoint}), $\mathcal{W}_{r,e,w}$ is the disjoint union of the sets $\mathcal{W}_{r,e,w}(\mu)$ as $\mu$ runs over all compositions of $r$. Thus,
    \[
        |\mathcal{W}_{r,e,w}| \;=\; \sum_{\mu \models r} |\mathcal{W}_{r,e,w}(\mu)|.
    \]
    Fix a composition $\mu=(\mu_1,\ldots,\mu_j) \models r$. By \autoref{cor:simplex-points-counting}, the size of the simplex $\mathcal{W}_{r,e}$ is $\binom{e-1}{j-1}$.
    By \autoref{thm:simplicial-decomposition}(\ref{itm:W-copies}), $\mathcal{W}_{r,e,w}(\mu)$ is a disjoint union of copies of this simplex. The number of copies is exactly $A(\mu;w)$. Therefore:
    \begin{equation}\label{eq:mu-summand}
        |\mathcal{W}_{r,e,w}(\mu)| \;=\; A(\mu;w) \binom{e-1}{j-1}.
    \end{equation}
    Summing \eqref{eq:mu-summand} over all compositions $\mu \models r$ yields the desired formula.
\end{proof}

\subsection{Stingray and Regular patterns}\label{subsec:stingray-hexagon}
In this section, we analyze the geometric structure of the sets $\mathcal{W}_{r,e,w}$. While our primary motivation is to explain the visual patterns observed in the case $r=3$ (such as the ``stingray'' and ``hexagon'' shapes seen in \autoref{fig:r3e8-grid-0-9}), most of the underlying structural results hold for arbitrary $r$. We therefore formulate our results for general $r \ge 3$ wherever possible. 


We first establish a recurrence relation connecting patterns of weight $w$ to those of weight $w+1$. This explains how the pattern ``grows'' as the $e$-weight increases.

\begin{Lemma}\label{lm:old-become-right}
    Fix a generic triple $\triple$. The set  
    $\mathcal{W}_{r,e,w}$ embeds into $\mathcal{W}_{r,e,w+1}$ via a translation by $e\Lambda_1$.
    Specifically, a point $\sum_{i=1}^{r-1} a_i\Lambda_i$ lies in $\mathcal{W}_{r,e,w}$ if and only if the point
    $(a_1+e)\Lambda_1 + \sum_{i=2}^{r-1} a_i\Lambda_i$
    lies in $\mathcal{W}_{r,e,w+1}$. Geometrically, this image of $\mathcal{W}_{r,e,w}$ is precisely $\mathcal{W}_{r,e,w+1}\cap H^+_{\alpha_1, e}$.
\end{Lemma}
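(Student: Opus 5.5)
The plan is to prove the statement at the level of partitions and abaci, and then transfer it to weights through $\Omega$. In terms of beta numbers, translating by $e\Lambda_1$ changes only the coefficient $\beta_1-\beta_2$, increasing it by $e$. The natural move to use is therefore $\beta_1\mapsto\beta_1+e$ with all other beads fixed, i.e.\ sliding the \emph{first} bead down one row on its runner.

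\textbf{Forward direction.} Let $\lambda\in\mathscr{P}_{r,e,w}$ with $r$-beta numbers $\beta_1>\cdots>\beta_r$.
Since $\beta_1$ is the largest bead, the position $\beta_1+e$ is a gap, so sliding down is a valid move. The new sequence $(\beta_1+e,\beta_2,\dots,\beta_r)$ is still strictly decreasing. The last bead $\beta_r\ge 0$ is untouched (as $r\ge 3$), so this is a valid sequence of $r$-beta numbers of a partition $\lambda'$ with at most $r$ parts.

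Next we compute the effect on the core and the quotient, using the description in \autoref{subsec:core-quotient}.
\begin{itemize}
\item The bead stays on its runner, so the flush configuration obtained by sliding everything up, and hence $\operatorname{core}_e(\lambda')=\operatorname{core}_e(\lambda)$, is unchanged.
\item On that runner, $\beta_1$ is the bead with the largest row index $y_{j,1}$. Replacing $y_{j,1}$ by $y_{j,1}+1$ keeps the row indices strictly decreasing and adds exactly one box to the first part of $\lambda^{(j)}$.
\end{itemize}
Hence $w_e(\lambda')=\sum_j|\lambda'^{(j)}|=w+1$. Directly from the formula $\Omega(\beta)=\sum_i(\beta_i-\beta_{i+1})\Lambda_i$ we get $\Omega(\lambda')=\Omega(\lambda)+e\Lambda_1$. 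Moreover its $\Lambda_1$-coefficient satisfies $\beta_1+e-\beta_2\ge e+1>e$, so $\Omega(\lambda')\in H^+_{\alpha_1,e}$.

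\textbf{Reverse direction.} Let $y=\sum a_i\Lambda_i\in\mathcal{W}_{r,e,w+1}$ with $a_1>e$, and choose $\eta\in\mathscr{P}_{r,e,w+1}$ with $\Omega(\eta)=y$. Then $\beta_1(\eta)-\beta_2(\eta)=a_1\ge e+1$. Consequently $\beta_1-e>\beta_2$, so the position $\beta_1-e$ is a gap and the first bead can slide up one row.

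By the same bookkeeping as above, this removes one box from the first part of the relevant quotient partition. Note that $y_{j,1}-1>y_{j,2}$ still holds, because no bead of that runner lies strictly between $\beta_2$ and $\beta_1$. The result is a partition of $e$-weight $w$, still with at most $r$ parts, whose image is $y-e\Lambda_1$.

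Together the two directions give the "if and only if" statement. They also show that the image of $\mathcal{W}_{r,e,w}$ is exactly $\mathcal{W}_{r,e,w+1}\cap H^+_{\alpha_1,e}$, since $\langle\alpha_1^\vee,y\rangle=a_1$.

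\textbf{Main obstacle.} The delicate point is well-definedness at the level of weights: a weight may have several preimages under $\Omega$. By \autoref{lm:omega-shift-invariance}, these preimages differ only by a global shift $k$ of the beta numbers. I would handle this by never choosing a canonical preimage. Each direction starts from an arbitrary preimage, and the statement is purely existential ("lies in $\mathcal{W}$"), so any preimage suffices.

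The other point needing care is the claim that sliding one bead by one row changes the weight by exactly one. I would justify this directly from $w_e(\lambda)=\sum_j|\lambda^{(j)}|$ and the row-index description of $\lambda^{(j)}$, rather than through the size formula.
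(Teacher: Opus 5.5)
Your proposal is correct and is essentially the paper's own proof. You slide the first bead $\beta_1$ down one row to pass from weight $w$ to $w+1$, and, when $\beta_1-\beta_2>e$, you slide it back up; your quotient bookkeeping just makes explicit the weight change that the paper asserts directly.

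One small caveat, shared with the paper: the reverse direction only applies when the $\Lambda_1$-coefficient of the translated point exceeds $e$, so the literal ``if and only if'' needs $a_1\ge 1$. For example, with $(r,e,w)=(3,4,1)$, the beta numbers $(9,5,0)$ give $4\Lambda_1+5\Lambda_2\in\mathcal{W}_{3,4,2}$, but $0\cdot\Lambda_1+5\Lambda_2\notin\mathcal{W}_{3,4,1}$. This is why the geometric form of the statement, which you do prove correctly, uses the open half-space $H^+_{\alpha_1,e}$.
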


\begin{proof}
    Let $\lambda \in \mathscr{P}_{r,e,w}$ be a partition such that $\Omega(\lambda) = \sum a_i \Lambda_i$. Consider the $e$-abacus of $\lambda$ with $r$ beads. The coefficients of $\Omega(\lambda)$ are given by the differences of beta numbers: $a_i = \beta_i(\lambda) - \beta_{i+1}(\lambda)$.

    We modify the abacus by sliding down the first bead $\beta_1$ (the largest beta number) by one. This operation replaces $\beta_1$ with $\beta'_1 = \beta_1 + e$, leaving all other beads fixed ($\beta'_j = \beta_j$ for $j > 1$). Since $\beta_1 > \beta_2$, clearly $\beta'_1 > \beta_2$, so the new sequence of beta numbers is still strictly decreasing and corresponds to a new partition $\lambda'$.
    Write $\Omega(\lambda') = \sum a_i' \Lambda_i$. The new coefficients $a_i'$ are:
    \begin{align*}
        a'_1 &= \beta'_1 - \beta'_2 = (\beta_1 + e) - \beta_2 = a_1 + e, \\
        a'_{i} &= \beta'_i - \beta'_{i+1} = \beta_i - \beta_{i+1} = a_i \quad \text{for } i \ge 2.
    \end{align*}
    By the definition of $e$-weight, $w_e(\lambda') = w_e(\lambda) + 1 = w+1$. Thus $\Omega(\lambda') \in \mathcal{W}_{r,e,w+1}$.

    Conversely, suppose $\sum b_i \Lambda_i\in \mathcal{W}_{r,e,w+1}$ with $b_1> e$. Suppose that this dominant weight arises from a partition $\eta$ whose beta numbers are $\gamma_1, \dots, \gamma_r$. Then the condition $b_1> e$ implies $\gamma_1 - \gamma_2> e$. Thus $\gamma_1 - e$ is strictly greater than $\gamma_2$, and in the abacus of $\eta$ we can slide the first bead $\gamma_1$ up by one, replacing $\gamma_1$ with $\gamma_1 - e$. This yields a partition of $e$-weight $w$ mapping to $\sum (b_i - \delta_{i,1}e) \Lambda_i\in\mathcal{W}_{r,e,w}$.
\end{proof}

We next study the occurrence of $0$–dimensional simplices in $\mathcal{W}_{r,e,w}$.

\begin{Lemma}\label{lm:holes-of-pattern}
    Let $a_1, \dots, a_{r-1}$ be non-negative integers. The point $P = \sum_{i=1}^{r-1} a_i e \Lambda_i$ lies in $\mathcal{W}_{r,e,w}$ if and only if
    \begin{equation}\label{eq:vertex-condition}
        w \ge \sum_{i=1}^{r-1} i(a_i-1) \quad \text{and} \quad w \equiv \sum_{i=1}^{r-1} i(a_i-1) \pmod{r}.
    \end{equation}
    Moreover, such points correspond precisely to the component $\mathcal{W}_{r,e,w}\big((r)\big)$ in the simplicial decomposition; that is, they arise from abacus configurations where all $r$ beads lie on a single runner.
\end{Lemma}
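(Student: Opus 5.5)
The approach is to read off the abacus configuration directly from the coordinates of $P$. By the formula $\Omega(\lambda)=\sum_i(\beta_i-\beta_{i+1})\Lambda_i$ of \autoref{subsec:map-omega}, every point of $\mathcal{W}_{r,e,w}$ has all coefficients $\beta_i-\beta_{i+1}\ge 1$. So the statement implicitly requires $a_i\ge 1$ for all $i$. Indeed, for $r=3$ and $a=(0,0)$ the congruence and inequality hold at $w=0$, yet $P=0\notin P^+\cap\Omega(\cdot)$. I will state this restriction explicitly at the start. Under it, the plan is as follows.

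\textbf{Step 1: one runner.} Suppose $P=\Omega(\lambda)$ with $\lambda\in\mathscr{P}_{r,e,w}$. Then $\beta_i-\beta_{i+1}=a_ie\equiv 0 \pmod e$ for every $i$, so all $r$ beta numbers are congruent modulo $e$. Hence all beads of $\operatorname{Ab}^r_e(\lambda)$ lie on a single runner $b$, so $\operatorname{core}_e(\lambda)$ has all $r$ beads flush on runner $b$, i.e.\ $\lambda\in\mathscr{P}_{r,e,w}((r))$. Conversely, if all beads lie on one runner, every difference is a multiple of $e$, so $\Omega(\lambda)$ has the form $\sum a_ie\Lambda_i$. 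This gives the ``moreover'' clause.

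\textbf{Step 2: parametrize and compute the weight.} Write the rows of the beads on runner $b$ as $y_1>\dots>y_r\ge 0$. Then $y_i-y_{i+1}=a_i$, so with $c:=y_r\ge 0$ we get $y_i=c+\sum_{k\ge i}a_k$. By \autoref{lm:omega-shift-invariance}, the runner $b$ and the shift $c$ are the only freedom, and the choice of $b$ does not affect $\Omega$. The $e$-weight is the size of the single nonempty quotient partition, whose $r$-beta numbers are the $y_i$. The flush core occupies rows $r-1,\dots,0$, so
\[
w=\sum_{i=1}^r\bigl(y_i-(r-i)\bigr)=rc+\sum_{i=1}^{r}\sum_{k=i}^{r-1}(a_k-1)=rc+\sum_{k=1}^{r-1}k(a_k-1).
\]
Write $S=\sum_k k(a_k-1)$.

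\textbf{Step 3: conclude both directions.} If $P\in\mathcal{W}_{r,e,w}$, then $w=rc+S$ with $c\ge 0$, which is exactly \eqref{eq:vertex-condition}. Conversely, given \eqref{eq:vertex-condition}, set $c=(w-S)/r\in\Z_{\ge0}$. Place beads at rows $y_i=c+\sum_{k\ge i}a_k$ on runner $0$; these rows are strictly decreasing because $a_k\ge1$. This yields $r$ valid beta numbers, hence a partition with at most $r$ parts. By Step 2 it has $e$-weight $w$, and its $\Omega$-image is $P$.

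The only delicate point I expect is the weight bookkeeping in Step 2: one must check that the $e$-weight equals $\sum_i y_i-\binom r2$, using $w_e=\sum_j|\lambda^{(j)}|$. Everything else is direct, and the generic hypothesis $e>r$ is not needed.
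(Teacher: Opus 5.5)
Your argument is correct and is essentially the paper's proof: the differences $a_ie$ force all $r$ beads onto one runner, and counting upward slides (equivalently, the size of the single nonempty quotient partition) gives $w=rc+\sum_{i=1}^{r-1}i(a_i-1)$, where $c\ge 0$ is the row of the last bead. You go slightly beyond the paper in two ways: you write out the converse construction explicitly, and you note that it needs $a_i\ge 1$ for all $i$. Your example $r=3$, $a=(0,0)$, $w=0$ correctly shows that the statement fails as written for non-negative $a_i$, whereas the paper's definition of an affine vertex, which requires positive $k_\alpha$, implicitly assumes positivity.
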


\begin{proof}
    Suppose that $\lambda \in \mathscr{P}_{r,e,w}$ maps to $P$, and let $\beta_i$ be the beta numbers of $\lambda$. Then $\beta_i - \beta_{i+1} = a_i e$ for all $1 \le i \le r-1$.
    This implies $\beta_i \equiv \beta_{i+1} \pmod e$ for all $i$. Consequently, all $r$ beads lie on the same runner of the $e$-abacus. Hence the partition $\lambda$ belongs to the set $\mathscr{P}_{r,e,w}\big((r)\big)$, where $\mu=(r)$ is the composition of length $1$.

    Let the single non-empty runner be the $j$-th runner ($0 \le j \le e-1$). The positions of the beads are determined by the gaps $a_i$ between them. Specifically, if the last bead $\beta_r$ is at row $k$ (position $ke+j$), then the bead $\beta_i$ is at position:
    \[
        \beta_i \;=\; \beta_r + \sum_{m=i}^{r-1} (\beta_m - \beta_{m+1}) \;=\; (ke+j) + e \sum_{m=i}^{r-1} a_m.
    \]
    This shows the bead $\beta_i$ is on row $k+\sum_{m=i}^{r-1} a_m$. By sliding all beads up as high as possible, we see that the bead $\beta_i$ will be put on row $r-i$, hence the $e$-weight of $\lambda$ is:
    \[
        w_e(\lambda) \;=k+\sum_{i=1}^{r-1}\big(k+\sum_{m=i}^{r-1}a_m-(r-i)\big)=\; kr + \sum_{i=1}^{r-1} i(a_i-1) ,
    \]
   This is equivalent to the desired statement.
\end{proof}

We now formalize the patterns observed in \autoref{fig:r3e8-grid-0-9}.

\begin{Definition}\label{def:affine-vertex}
    A point $v \in \mathcal{C}^+\cap P$ is called an \emph{affine vertex} (of the $e$-alcove geometry) if it lies in the intersection of the affine hyperplanes corresponding to all positive roots. That is, for each $\alpha \in R^+$, there exists a positive integer $k_\alpha$ such that:
    \[
        \langle \alpha^\vee, v \rangle \;=\; k_\alpha e.
    \]
\end{Definition}

Fix a generic triple $\triple$ as above.  
An affine vertex $v=\sum_{i=1}^{r-1} a_i \Lambda_i \in \mathcal{W}_{r,e,w}$
is called a \emph{boundary affine vertex} of $\mathcal{W}_{r,e,w}$ if there is no distinct point
$Q = \sum_{i=1}^{r-1} b_i \Lambda_i \in \mathcal{W}_{r,e,w}$
such that $b_i \ge a_i$ for all $1 \le i \le r-1$.

An affine vertex
$u=\sum_{i=1}^{r-1} c_i \Lambda_i \in \mathcal{C}^+\cap P$
is called an \emph{interior affine vertex} of $\mathcal{W}_{r,e,w}$ if there exists a boundary affine vertex
$v=\sum_{i=1}^{r-1} a_i \Lambda_i$
of $\mathcal{W}_{r,e,w}$ such that $c_i \le a_i$ for all $i$ and $c_j<a_j$ for at least one index $j$. Note that we do not require an interior affine vertex of $\mathcal{W}_{r,e,w}$ to lie in $\mathcal{W}_{r,e,w}$ itself.

\begin{Example}\label{eg:interior-boundary-affine-vertex}
    Let $\triple=(3,8,3)$ and consider $\mathcal{W}_{3,8,3}$; see \autoref{fig:r3e8w3}. The boundary affine vertices are
    \[
      2e\Lambda_1+2e\Lambda_2
      \quad\text{and}\quad
      4e\Lambda_1+e\Lambda_2,
    \]
    and the interior affine vertices are
    \[
      e\Lambda_1+e\Lambda_2,\quad
      e\Lambda_1+2e\Lambda_2,\quad
      2e\Lambda_1+e\Lambda_2,\quad
      3e\Lambda_1+e\Lambda_2.
    \]
\end{Example}

\begin{Lemma}\label{lm:interior-vertex-existence}
    Fix a generic triple $\triple$. There exists an integer $w_{\text{in}}$ such that for all $w \ge w_{\text{in}}$, the set $\mathcal{W}_{r,e,w}$ contains an affine vertex $v = \sum_{i=1}^{r-1} a_i e \Lambda_i$.
    Specifically, one may take $w_{\text{in}} = (r-1)^2$.
\end{Lemma}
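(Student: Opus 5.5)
The plan is to reduce everything to \autoref{lm:holes-of-pattern}. By that lemma, a point $\sum_{i=1}^{r-1} a_i e\Lambda_i$ with $a_i\ge 0$ lies in $\mathcal{W}_{r,e,w}$ if and only if
\[
  w \ge S(a) \quad\text{and}\quad w \equiv S(a) \pmod r, \qquad\text{where } S(a):=\sum_{i=1}^{r-1} i(a_i-1).
\]
So it suffices to show the following: for every $w\ge (r-1)^2$ there is a tuple $(a_1,\dots,a_{r-1})$ of \emph{positive} integers satisfying both conditions. Positivity is needed so that every $\langle\alpha^\vee,v\rangle$ is a positive multiple of $e$, as \autoref{def:affine-vertex} requires. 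For $\alpha=\varepsilon_i-\varepsilon_j$, this pairing equals $e(a_i+\dots+a_{j-1})$, so positivity of all $a_i$ makes $v$ an affine vertex.

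The construction is to set $c_i = a_i-1\ge 0$ and look for $c$ with $S = \sum_{i=1}^{r-1} i c_i$ satisfying $S\le w$ and $S\equiv w \pmod r$. Let $t\in\{0,1,\dots,r-1\}$ be the residue of $w$ modulo $r$.
\begin{itemize}
  \item If $t=0$, take all $c_i=0$, so $S=0$.
  \item If $1\le t\le r-1$, take $c_t=1$ and all other $c_i=0$, so $S=t$.
\end{itemize}
In both cases $S=t\le r-1\le w$ as soon as $w\ge r-1$. The required congruence $S\equiv w\pmod r$ holds by construction. \autoref{lm:holes-of-pattern} then gives $v=\sum_i a_i e\Lambda_i\in\mathcal{W}_{r,e,w}$.

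This shows that any bound $w_{\text{in}}\ge r-1$ works, and in particular the stated $w_{\text{in}}=(r-1)^2$ does, since $(r-1)^2\ge r-1$ for $r\ge 3$. One could instead follow a Frobenius-coin style argument with the weights $1,\dots,r-1$, which is presumably where $(r-1)^2$ comes from. That is unnecessary here, because the weight $i=t$ is itself available to hit each residue directly.

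The only point needing care is confirming that \autoref{lm:holes-of-pattern} applies to positive $a_i$ under the generic-triple hypothesis, i.e.\ that the single-runner configuration it builds is a genuine abacus. The lemma's proof does this: with $k=(w-S)/r\ge 0$, put the last bead on row $k$ of any runner and space the beads by $a_i\ge1$ rows. The rows are then distinct, so the configuration is valid. I expect no real obstacle beyond this verification and the positivity check for the affine-vertex condition.
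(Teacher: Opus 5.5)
Your proof is correct. Like the paper, you reduce everything to \autoref{lm:holes-of-pattern}; the difference lies in the choice of witness, and yours gives a stronger bound.

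\textbf{The paper's witness.} The paper freezes $a_1=\dots=a_{r-2}=1$ and solves the congruence using the last coordinate alone. Since $r-1\equiv -1\pmod{r}$, this forces $x_{r-1}\equiv -w\pmod{r}$ with $x_{r-1}\in\{0,\dots,r-1\}$. The cost $(r-1)x_{r-1}$ can then be as large as $(r-1)^2$. That is where the stated bound comes from, not from a Frobenius-coin argument as you guessed.

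\textbf{Your witness.} You place a single unit in the coordinate indexed by the residue $t$ of $w$, so the cost is only $t$. Since $0\le t\le w$ for every $w\ge 0$, your construction in fact works for all $w$, so one may even take $w_{\text{in}}=0$. This agrees with a direct observation. The set $\mathcal{W}_{r,e,w}\big((r)\big)$ is nonempty for every $w$, because $A\big((r);w\big)\ge 1$. It also consists entirely of affine vertices, because all beads lie on one runner and so every difference $\beta_i-\beta_{i+1}$ is a positive multiple of $e$.

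\textbf{Positivity.} Your insistence on $a_i\ge 1$ is a worthwhile point. \autoref{lm:holes-of-pattern} is stated for non-negative $a_i$, but a coefficient $a_i=0$ never occurs in the image of $\Omega$. For example, with $r=3$ and $a=(0,1)$ the lemma's conditions hold for $w=2$, yet the point is not in $\mathcal{W}_{3,e,2}$. So the lemma is only valid for positive $a_i$. Both the paper's witness and yours satisfy this, and positivity is also what makes each pairing $\langle\alpha^\vee,v\rangle$ a positive multiple of $e$, as the definition of an affine vertex requires.
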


\begin{proof}
    By \autoref{lm:holes-of-pattern}, an affine vertex $P = \sum_{i=1}^{r-1} a_i e \Lambda_i$ exists in $\mathcal{W}_{r,e,w}$ if and only if
    \[
        w \ge \sum_{i=1}^{r-1} i(a_i-1) \quad \text{and} \quad w \equiv \sum_{i=1}^{r-1} i(a_i-1) \pmod r.
    \]
    We restrict our search to the vertices such that $a_1 = \dots = a_{r-2} = 1$. The condition reduces to find an integer $x_{r-1} :=a_{r-1}-1\ge 0$ such that:
    \[
        w \ge (r-1)x_{r-1} \quad \text{and} \quad  w\equiv (r-1)x_{r-1} \pmod r.
    \]
    Since $r-1 \equiv -1 \pmod r$, the congruence becomes $-x_{r-1} \equiv w \pmod r$. This linear congruence has a unique solution $x_{r-1}$ in the range $\{0, 1, \dots, r-1\}$.
    For this minimal solution, the required weight is $(r-1)x_{r-1} \le (r-1)^2$.
    Thus, for any $w \ge (r-1)^2$, there exists a valid choice of $x_{r-1}$ (and hence $a_{r-1} \ge 1$, with $a_i=1$ otherwise) satisfying the condition.
\end{proof}

For any real number $c$, the \emph{ceiling function} $\lceil c \rceil$ is the least integer greater than or equal to $c$. For instance, $\lceil 5.2\rceil =6$.
\begin{Lemma}\label{lem:weight-lower-bound-beta}
    Let $\lambda$ be a partition such that $\ell(\lambda)\leq r$ and let $\beta_1 > \beta_2 > \cdots > \beta_r$ be its $r$-beta numbers. Set $x_i:=\beta_i - \beta_{i+1}$ and $m_i:= \Big\lceil \frac{x_i}{e} \Big\rceil
        \quad (1 \le i \le r-1)$.
    Then the $e$--weight of $\lambda$ satisfies
    \begin{equation}\label{eq:weight-lower-bound}
        w_e(\lambda) \;\ge\; \sum_{i=1}^{r-1} i\,(m_i-1).
    \end{equation}
\end{Lemma}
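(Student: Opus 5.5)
We need to prove $w_e(\lambda)\ge\sum_i i(m_i-1)$ with $m_i=\lceil x_i/e\rceil$. The plan is to count, bead by bead, how many rows each bead must slide up to reach the core, and bound that count from below using the consecutive gaps.

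\emph{Setup.} Write each bead's row as $\rho_i=\lfloor\beta_i/e\rfloor$. The $e$-weight is the total number of upward slides needed to make every runner flush. On a given runner, if the beads on it have rows $y_1>\dots>y_s$, the number of slides is $\sum_t (y_t-(s-t))$. So $w_e(\lambda)=\sum_i(\rho_i-h_i)$, where $h_i$ is the number of beads on the same runner as $\beta_i$ that lie strictly below it, i.e.\ have smaller beta number. Since $h_i\le \#\{k>i\}=r-i$, we get
\[
w_e(\lambda)\;\ge\;\sum_{i=1}^{r}\bigl(\rho_i-(r-i)\bigr).
\]
This is exactly the bound that was tight in the proof of \autoref{lm:holes-of-pattern}, where all beads lie on one runner.

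\emph{Bounding the rows.} Next I bound $\rho_i-\rho_{i+1}$ from below. Since $\beta_i-\beta_{i+1}=x_i$, we have $\rho_i-\rho_{i+1}\ge\lfloor x_i/e\rfloor$. I would like this to be at least $m_i-1=\lceil x_i/e\rceil-1$, and it is, because $\lfloor c\rfloor\ge\lceil c\rceil-1$. Telescoping from the last bead, and using $\rho_r\ge0$, gives
\[
\rho_i\;\ge\;\sum_{k=i}^{r-1}(m_k-1)+\rho_r\;\ge\;\sum_{k=i}^{r-1}(m_k-1).
\]

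\emph{Combining.} Substituting into the setup bound gives
\[
w_e(\lambda)\;\ge\;\sum_{i=1}^{r}\sum_{k=i}^{r-1}(m_k-1)\;-\;\sum_{i=1}^r(r-i).
\]
Swapping the order of summation turns the double sum into $\sum_k k(m_k-1)$. This leaves a stray term $-\binom{r}{2}$, which does not match \eqref{eq:weight-lower-bound}; this is the main obstacle.

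\emph{Resolving the obstacle.} The stray term comes from treating $h_i$ and the row gaps independently; the fix is to couple them. Beads $\beta_i$ and $\beta_k$ with $k>i$ can share a runner only if $\beta_i-\beta_k\ge e$. I would refine the telescoping so that it counts strictly: for every $k>i$ on the same runner as $\beta_i$, the row of $\beta_i$ is at least one more than that row. I would therefore use the following bound in place of the crude $h_i\le r-i$:
\[
\rho_i-h_i\;\ge\;\sum_{k=i}^{r-1}(m_k-1).
\]
To prove it I would induct on $i$ downward from $r$. Passing from $\beta_{i+1}$ to $\beta_i$ with gap $x_i$, the row increases by at least $\lceil x_i/e\rceil-1$. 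It increases by a full $\lceil x_i/e\rceil$ or more exactly when $\beta_i$ wraps past a runner position, which is precisely when $h$ can increase relative to the previous bead. The fallback is a cleaner comparison argument: repeatedly slide beads up (each slide lowers the weight by one) and show that each slide lowers the right-hand side by at most one. The process ends at a core, where every $x_i<e$ apart from runner-internal gaps equal to $e$, so every $m_i\le1$ and the right-hand side is $\le 0$.

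For the fallback step I would check that sliding bead $\beta_i$ up by $e$ changes only $x_{i-1}$ and $x_i$, by $-e$ and $+e$ respectively. The right-hand side therefore changes by $-(i-1)+i=+1$, which is the wrong sign. To fix this I would instead slide the lowest movable bead and let it reorder. Making this bookkeeping work is the step I expect to be hardest.
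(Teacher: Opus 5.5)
There is a genuine gap. Your reduction $w_e(\lambda)=\sum_i(\rho_i-h_i)$ is correct. The refined per-bead inequality $\rho_i-h_i\ge\sum_{k=i}^{r-1}(m_k-1)$ is also true, and summing it does give the lemma. But you never prove it, and neither route you sketch works.

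\emph{The induction.} It compares $\rho_i-h_i$ with $\rho_{i+1}-h_{i+1}$. However, $h_i$ and $h_{i+1}$ count beads on different runners, so the two quantities are unrelated. Take $e=3$ and $\beta=(7,6,4,1)$, i.e.\ $\lambda=(4,4,3,1)$.
\begin{itemize}
\item The runner of $\beta_1=7$ is flush, so $\rho_1-h_1=0$.
\item $\beta_2=6$ has two empty positions above it, so $\rho_2-h_2=2$.
\item Here $x_1=1$ and the row does not change, yet $h$ jumps by $2$.
\end{itemize}
So the inductive step $\rho_i-h_i\ge\rho_{i+1}-h_{i+1}+(m_i-1)$ is false.

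\emph{The fallback.} First, your signs are reversed. Sliding $\beta_i$ up raises $x_{i-1}$ by $e$ and lowers $x_i$ by $e$. An order-preserving slide therefore changes the right-hand side by $(i-1)-i=-1$, which is exactly what you want. The real obstruction is slides that jump over other beads. Take $e=3$ and $\beta=(5,4,0)$. Sliding the bead at $5$ to $2$ gives $(4,2,0)$. The right-hand side drops from $2$ to $0$, while the weight drops only by one. So the choice and order of slides must be controlled, and your proposal does not do this.

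\emph{The fix.} Count gaps on a single runner directly. Fix $i$; then $\rho_i-h_i$ is the number of empty positions $p$ with $0\le p<\beta_i$ and $p\equiv\beta_i$ modulo $e$. For each $k\ge i$, the open interval $(\beta_{k+1},\beta_k)$ is bead-free and consists of $x_k-1$ consecutive integers. Hence it contains at least $\lfloor (x_k-1)/e\rfloor=m_k-1$ positions congruent to $\beta_i$. These intervals are disjoint and lie in $[0,\beta_i)$, which gives your refined inequality at once.

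\emph{The paper's route.} It reaches the same per-bead count differently. For each $i$, it slides the block $\beta_1,\dots,\beta_i$ up by $m_i-1$ rows. This is legal and order-preserving because $x_i>(m_i-1)e$ and the interval between consecutive beads is empty. This exhibits $\sum_i i(m_i-1)$ legal upward slides, each lowering the weight by exactly one. It never needs to go all the way to the core or handle reordering.
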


\begin{proof}
    By definition, the $e$-weight $w_e(\lambda)$ is the total number of upward moves of beads required to reach the $e$-core. By definition of $m_i$, we can write $x_i = (m_i-1)e + s_i$, where $1 \le s_i \le e$.
    
    Consider a bead $\beta_i$. Since there are no other beads strictly between $\beta_i$ and $\beta_{i+1}$, and the gap $x_i > (m_i-1)e$, we can slide this bead up $m_i-1$ steps without crossing $\beta_{i+1}$. Then the new position of this bead is $\beta_i' = \beta_i - (m_i-1)e = \beta_{i+1} + s_i$. Since $s_i \ge 1$, we have $\beta_i' > \beta_{i+1}$, so the relative order of these beads is preserved.
    
    Performing these moves on $\beta_i$ creates empty space below $\beta_i'$. This allows all beads $\beta_k$ with $k < i$ (which are larger than $\beta_i$) to also slide up by at least $(m_i-1)$ while maintaining the strict decreasing order of the beta numbers.
    Thus, the gap associated with $m_i$ contributes at least $m_i-1$ upward moves to bead $\beta_i$, and consequently at least $m_i-1$ moves to every bead $\beta_k$ below it ($k < i$).
    
    Summing these contributions over $i=1, \dots, r-1$, the total number of upward moves is at least $\sum_{i=1}^{r-1} i(m_i-1)$. As the $e$-weight accounts for all necessary moves to reach its core, we obtain the desired inequality \autoref{eq:weight-lower-bound}.
\end{proof}

\begin{Lemma}\label{lem:affine-vertex-dominates}
    Fix a generic triple $\triple$. Suppose $Q=\sum_{i=1}^{r-1} x_i \Lambda_i \in\mathcal{W}_{r,e,w}$, then there exists an affine vertex
    $v=\sum_{i=1}^{r-1} a_i e\Lambda_i \in \mathcal{W}_{r,e,w}$
    such that $a_i e \ge x_i$ for all $1\le i\le r-1$. 
\end{Lemma}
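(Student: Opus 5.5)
The plan is to build the dominating affine vertex directly from the coordinates of $Q$, using \autoref{lem:weight-lower-bound-beta} to control the weight and \autoref{lm:holes-of-pattern} to certify membership.

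\textbf{Step 1: the smallest dominating vertex.} Let $\lambda\in\mathscr{P}_{r,e,w}$ with $\Omega(\lambda)=Q$, and write $x_i=\beta_i(\lambda)-\beta_{i+1}(\lambda)\ge 1$. Set
\[
m_i=\Big\lceil \tfrac{x_i}{e}\Big\rceil \qquad (1\le i\le r-1).
\]
Then $m_i\ge 1$ and $m_ie\ge x_i$. So $\sum_i m_ie\Lambda_i$ is the smallest affine vertex dominating $Q$ coordinatewise. By \autoref{lem:weight-lower-bound-beta},
\[
S:=\sum_{i=1}^{r-1} i(m_i-1)\le w .
\]
Set $d:=w-S\ge 0$.

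\textbf{Step 2: fix the congruence.} This vertex need not satisfy the congruence condition in \eqref{eq:vertex-condition}. I would correct it using the first coordinate only, since raising $a_1$ by one raises the sum $\sum_i i(a_i-1)$ by exactly $1$. Let $t\in\{0,\dots,r-1\}$ be the residue of $d$ modulo $r$, and define
\[
a_1=m_1+t,\qquad a_i=m_i\quad (2\le i\le r-1).
\]
Then $\sum_i i(a_i-1)=S+t$. Since $t\le d$, we get $S+t\le w$. Also $w-(S+t)=d-t$ is divisible by $r$. Hence both conditions of \eqref{eq:vertex-condition} hold.

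\textbf{Step 3: conclude.} By \autoref{lm:holes-of-pattern}, the point $v=\sum_i a_ie\Lambda_i$ lies in $\mathcal{W}_{r,e,w}$. It is an affine vertex, since all $a_i\ge 1$. Finally $a_ie\ge m_ie\ge x_i$ for every $i$, which is the required domination.

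The only substantive input is the weight lower bound, \autoref{lem:weight-lower-bound-beta}, which is already established. The one point to get right is the congruence adjustment: padding $a_{r-1}$ instead would shift the sum by $r-1$ per step and could overshoot $w$, so the adjustment must be made through $a_1$, which costs only $t\le d$.
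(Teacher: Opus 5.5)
Your proof is correct and follows the paper's argument: set $m_i=\lceil x_i/e\rceil$, use \autoref{lem:weight-lower-bound-beta} to get $S\le w$, enlarge only $a_1$, and conclude with \autoref{lm:holes-of-pattern}. The only difference is cosmetic. The paper adds all of $d=w-S$ to $a_1$, so that $\sum_i i(a_i-1)=w$ exactly; this makes the congruence automatic and the resulting vertex a boundary affine vertex. You add only the residue $t$ of $d$ modulo $r$, and both choices satisfy \eqref{eq:vertex-condition}.
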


\begin{proof}
    Set $m_i=\left\lceil x_i/e \right\rceil$, so $m_i e \ge x_i$ for each $i$. Let $\lambda$ be a partition such that $\Omega(\lambda)=Q$. By \autoref{lem:weight-lower-bound-beta} we have $w=w_e(\lambda)\ge\sum_{i=1}^{r-1} i\,(m_i-1)$. Define
    $v=\sum_{i=1}^{r-1} m_i e\,\Lambda_i + \bigl(w-\sum_{i=1}^{r-1} i\,(m_i-1)\bigr)e\,\Lambda_1$, that is, $a_i=m_i$ for $i\ge2$ and
    $a_1=m_1+\bigl(w-\sum_{i=1}^{r-1} i\,(m_i-1)\bigr)$. Then $\sum_{i=1}^{r-1} i\,(a_i-1)=w$ and by \autoref{lm:holes-of-pattern}, $v\in\mathcal{W}_{r,e,w}$.
\end{proof}

\begin{Corollary}\label{cor:equivalent-boundary-affine-vertex}
    An affine vertex $v=\sum_{i=1}^{r-1}a_{i}e\Lambda_i$ is a boundary affine vertex of $\mathcal{W}_{r,e,w}$ if and only if $v \in \mathcal{W}_{r,e,w}$ and there does not exist a distinct affine vertex $u=\sum_{i=1}^{r-1}b_{i}e\Lambda_i \in \mathcal{W}_{r,e,w}$ such that $b_i\geq a_i$ for all $i$.
\end{Corollary}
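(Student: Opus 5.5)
The plan is to prove the two implications separately. Both directions compare the definition of a boundary affine vertex, which forbids any point of $\mathcal{W}_{r,e,w}$ dominating $v$ coordinatewise, with the weaker condition in the corollary, which forbids only dominating \emph{affine vertices} in $\mathcal{W}_{r,e,w}$. The bridge between the two is \autoref{lem:affine-vertex-dominates}.

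\emph{($\Rightarrow$)} Suppose $v=\sum_i a_i e\Lambda_i$ is a boundary affine vertex of $\mathcal{W}_{r,e,w}$. By definition $v\in\mathcal{W}_{r,e,w}$. Suppose $u=\sum_i b_ie\Lambda_i\in\mathcal{W}_{r,e,w}$ is an affine vertex distinct from $v$ with $b_i\ge a_i$ for all $i$. Its coefficients then satisfy $b_ie\ge a_ie$, so $u$ is a distinct point $Q$ of $\mathcal{W}_{r,e,w}$ dominating $v$ coordinatewise. This contradicts the definition, so this direction is immediate.

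\emph{($\Leftarrow$)} Suppose $v\in\mathcal{W}_{r,e,w}$ and no distinct affine vertex of $\mathcal{W}_{r,e,w}$ dominates $v$. Let $Q=\sum_i x_i\Lambda_i\in\mathcal{W}_{r,e,w}$ satisfy $x_i\ge a_ie$ for all $i$; we must show $Q=v$.
\begin{enumerate}
\item Apply \autoref{lem:affine-vertex-dominates} to $Q$. This gives an affine vertex $u=\sum_i b_ie\Lambda_i\in\mathcal{W}_{r,e,w}$ with $b_ie\ge x_i\ge a_ie$, hence $b_i\ge a_i$ for all $i$.
\item By hypothesis $u$ cannot be distinct from $v$, so $u=v$.
\item It follows that $a_ie\ge x_i\ge a_ie$, hence $x_i=a_ie$ for all $i$ and $Q=v$.
\end{enumerate}
Thus no distinct point of $\mathcal{W}_{r,e,w}$ dominates $v$, and $v$ is a boundary affine vertex.

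The only real input is \autoref{lem:affine-vertex-dominates}, and the only subtlety is the squeeze in step (c), which forces $Q=v$ from $u=v$ together with $x_i\ge a_ie$. I expect no genuine obstacle beyond checking that the inequalities chain in the right direction.
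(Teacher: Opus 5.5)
Your proof is correct and takes the same route as the paper, whose proof is the single line ``this follows from the definition of a boundary affine vertex and \autoref{lem:affine-vertex-dominates}''. Your two directions (the forward one directly from the definition, and the reverse one via the dominating affine vertex $u$ supplied by that lemma and the squeeze $a_ie=b_ie\ge x_i\ge a_ie$) are exactly the details the paper leaves implicit.
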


\begin{proof}
    This follows from the definition of a boundary affine vertex and \autoref{lem:affine-vertex-dominates}.
\end{proof}

\begin{Proposition}\label{prop:characterization-of-boundary-affine-vertex}
    Fix a generic triple $\triple$. An affine vertex $v = \sum_{i=1}^{r-1} a_i e \Lambda_i$ is a boundary affine vertex of $\mathcal{W}_{r,e,w}$ if and only if $w = \sum_{i=1}^{r-1} i(a_i-1)$.
\end{Proposition}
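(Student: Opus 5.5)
We reduce to the criterion of \autoref{lm:holes-of-pattern} together with \autoref{cor:equivalent-boundary-affine-vertex}, so that everything becomes a statement about the integer vector $(a_1,\dots,a_{r-1})$. Write $S(a):=\sum_{i=1}^{r-1} i(a_i-1)$. By \autoref{lm:holes-of-pattern}, the affine vertex $v$ lies in $\mathcal{W}_{r,e,w}$ if and only if $w\ge S(a)$ and $w\equiv S(a)\pmod r$. By \autoref{cor:equivalent-boundary-affine-vertex}, $v$ is a boundary affine vertex if and only if $v\in\mathcal{W}_{r,e,w}$ and no other affine vertex $u=\sum b_ie\Lambda_i\in\mathcal{W}_{r,e,w}$ satisfies $b_i\ge a_i$ for all $i$. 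Note that affine vertices in $\mathcal{C}^+$ require $a_i\ge 1$, since $k_\alpha$ is positive.

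For the direction ``$\Leftarrow$'', assume $w=S(a)$. Then $v\in\mathcal{W}_{r,e,w}$ trivially. Suppose $u$ is an affine vertex in $\mathcal{W}_{r,e,w}$ with $b_i\ge a_i$ for all $i$ and $b\ne a$. Then $S(b)-S(a)=\sum i(b_i-a_i)>0$, since every term is nonnegative and at least one is positive with coefficient $i\ge1$. Hence $S(b)>w$, contradicting $w\ge S(b)$ from \autoref{lm:holes-of-pattern}. So $v$ is a boundary affine vertex.

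For the direction ``$\Rightarrow$'', assume $v$ is a boundary affine vertex. Then $v\in\mathcal{W}_{r,e,w}$, so $w=S(a)+tr$ for some integer $t\ge0$. If $t>0$, we exhibit a dominating vertex, mimicking the construction in \autoref{lem:affine-vertex-dominates}. Set $b_1=a_1+tr$ and $b_i=a_i$ for $i\ge2$. Then $S(b)=S(a)+tr=w$, so by \autoref{lm:holes-of-pattern} the vertex $u=\sum b_ie\Lambda_i$ lies in $\mathcal{W}_{r,e,w}$. It is distinct from $v$ and dominates it coordinatewise, contradicting \autoref{cor:equivalent-boundary-affine-vertex}. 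Therefore $t=0$, that is, $w=S(a)$.

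No step looks like a real obstacle once the two cited results are in hand. The only point needing care is the strict inequality in the ``$\Leftarrow$'' direction, which uses the positivity of the coefficients $i$. One should also confirm that $b$ still gives an affine vertex in the dominant chamber; this holds because $b_1\ge a_1\ge1$.
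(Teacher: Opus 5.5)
Your proof is correct and follows the paper's argument essentially step for step. Both write $w=S(a)+kr$ via \autoref{lm:holes-of-pattern}, and both rule out $k>0$ using the dominating vertex $v+kre\Lambda_1$. For $k=0$, both observe that any coordinatewise-dominating affine vertex $u\neq v$ would force $S(b)>w$, and then conclude with \autoref{cor:equivalent-boundary-affine-vertex}. Your added remark that affine vertices have $a_i\ge 1$ is a small point the paper leaves implicit.
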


\begin{proof}
    By \autoref{lm:holes-of-pattern}, $v \in \mathcal{W}_{r,e,w}$ if and only if $w = \sum_{i=1}^{r-1} i(a_i-1) + kr$ for some non-negative integer $k$. It suffices to show that $v$ is a boundary affine vertex if and only if $k=0$.

    Suppose $k > 0$. We can construct another affine vertex $v' = v + (kr)e\Lambda_{1} = (kr+a_1)e\Lambda_1 + \sum_{i=2}^{r-1} a_i e \Lambda_i$. Note that
    \[
        1\cdot (kr+a_1-1) + \sum_{i=2}^{r-1} i(a_i-1) = kr + \sum_{i=1}^{r-1} i(a_i-1) = w.
    \]
    Thus, $v' \in \mathcal{W}_{r,e,w}$ by \autoref{lm:holes-of-pattern}. Since the coordinates of $v'$ are strictly greater than or equal to those of $v$ and the first coordinate is strictly larger, $v$ cannot be a boundary affine vertex.

    Conversely, suppose $k=0$. If there exists another affine vertex $v'' = \sum_{i=1}^{r-1} b_i e \Lambda_i$ in $\mathcal{W}_{r,e,w}$ such that $b_i \ge a_i$ for each $i$, then
    \[
        \sum_{i=1}^{r-1} i(b_i-1) \ge \sum_{i=1}^{r-1} i(a_i-1) = w.
    \]
    However, since $v'' \in \mathcal{W}_{r,e,w}$, \autoref{lm:holes-of-pattern} implies that $\sum_{i=1}^{r-1} i(b_i-1) = w - k'r \le w$ for some $k' \ge 0$. These inequalities force the sum to be exactly $w$, which implies $b_i = a_i$ for all $i$. Hence, by \autoref{cor:equivalent-boundary-affine-vertex}, $v$ is a boundary affine vertex.
\end{proof}

For two affine vertices $v_1=\sum_{i=1}^{r-1} a_i e \Lambda_i$ and $v_2=\sum_{i=1}^{r-1} b_i e \Lambda_i$, we say
\begin{enumerate}[label=$\bullet$]
    \item $v_1$ and $v_2$ are \emph{adjacent} if there exists an index $j$ with $1\le j\le r-1$ such that $a_i = b_i \pm \delta_{ij}$ for all $i$.
    \item $(v_1,v_2)$ is a \emph{bad pair} of $\mathcal{W}_{r,e,w}$ if $v_1$ is an interior affine vertex, $v_2$ is a boundary affine vertex, and $a_i+\delta_{i,r-1}=b_i$ for all $i$.

    \item $(v_1,v_2)$ is a \emph{good pair} of $\mathcal{W}_{r,e,w}$ if both $v_1$ and $v_2$ are interior affine vertices, $v_1\notin\mathcal{W}_{r,e,w}\ni v_2$, and $a_i+\delta_{i,r-1}=b_i$ for all $i$.
\end{enumerate}

 In particular, both a bad pair and a good pair consist of adjacent affine vertices.

\begin{Example}\label{eg:good-pair-affine-vertices}
    Let $(r,e,w)=(3,8,3)$ and consider $\mathcal{W}_{3,8,3}$; see \autoref{fig:r3e8w3}. In this case the only bad pair is
    \[
      (2e\Lambda_1+e\Lambda_2,\;2e\Lambda_1+2e\Lambda_2),
    \]
    and there is no good pair.

    Now let $(r,e,w)=(3,8,5)$ and consider $\mathcal{W}_{3,8,5}$; see \autoref{fig:r3e8w5}. The bad pairs are
    \[
      (2e\Lambda_1+2e\Lambda_2,\;2e\Lambda_1+3e\Lambda_2)
      \quad\text{and}\quad
      (4e\Lambda_1+e\Lambda_2,\;4e\Lambda_1+2e\Lambda_2),
    \]
    while the unique good pair is
    \[
      (e\Lambda_1+e\Lambda_2,\;e\Lambda_1+2e\Lambda_2).
    \]
\end{Example}

For simplicity, in the rest of this section we write $\mathcal{A}$ instead of $\mathcal{A}^{(e)}$ for an $e$--alcove.

For each affine vertex $v\in\mathcal{C}^+\cap P$, let $\mathcal{S}_v$ be the set of $e$--alcoves $\mathcal{A}$ such that $v$ lies in the closure of $\mathcal{A}$, that is,
\[
  \mathcal{S}_v := \{\mathcal{A}\in\mathcal{H}^{(e)} \mid v\in\overline{\mathcal{A}}\}.
\]

The \emph{cell} $\mathfrak{C}_v$ centred at $v$ is the union of the closures of all $e$--alcoves in $\mathcal{S}_v$, namely
\[
  \mathfrak{C}_v := \bigsqcup_{\mathcal{A}\in\mathcal{S}_v}\overline{\mathcal{A}}.
\]
Its interior $\mathfrak{C}_v^\circ$ is called the \emph{open cell} centred at $v$.

Then
\[
  \mathfrak{C}_v\cap P
  = \{w\in\mathcal{C}^+\cap P : |\langle w-v,\alpha^\vee\rangle|\le e
      \text{ for all }\alpha\in R^+\}
\]
and
\begin{equation}\label{eq:distance-equation}
  \mathfrak{C}_v^\circ\cap P
  = \{w\in\mathcal{C}^+\cap P : |\langle w-v,\alpha^\vee\rangle|< e
      \text{ for all }\alpha\in R^+\}.
\end{equation}

\begin{Definition}\label{def:regular-stingray}    
    \begin{enumerate}
        \item For each good pair $(v_1,v_2)$ of $\mathcal{W}_{r,e,w}$, the \textbf{regular pattern} $\operatorname{Reg}_{v_1,v_2}$ associated to $(v_1,v_2)$ is the set $\mathfrak{C}_{v_1}^{\circ}\cap \mathcal{W}_{r,e,w}$.
        
        \item For each bad pair $(v_1, v_2)$ of $\mathcal{W}_{r,e,w}$, the \textbf{stingray pattern} $\operatorname{Stin}_{v_1,v_2}$ associated to $(v_1,v_2)$ is the set $\big(\mathfrak{C}_{v_1}^{\circ}\cap \mathcal{W}_{r,e,w}\big)\sqcup \{v_2\}$.
    \end{enumerate}
\end{Definition}

For any bad pair $(v_1,v_2)$, define $\operatorname{L}_{v_1,v_2}$ to be the (discrete) segment connecting $v_1$ and $v_2$, that is,
\[
  \operatorname{L}_{v_1,v_2}=\{v_1+k\Lambda_{r-1}\mid 1\le k\le e\}=\{v_2-k\Lambda_{r-1}\mid 0\le k\le e-1\}.
\]
We remind the readers that in this definition, $v_1$ is not included in this segment while $v_2$ is. The advantage of this definition is the following:
\begin{Lemma}\label{lm:tail-of-Wrew}
    For any bad pair $(v_1,v_2)$ of $\mathcal{W}_{r,e,w}$, the segment $\operatorname{L}_{v_1,v_2}$ is contained in $\mathcal{W}_{r,e,w}$.
\end{Lemma}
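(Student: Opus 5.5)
We will show directly that each point $v_2-k\Lambda_{r-1}$ with $0\le k\le e-1$ is $\Omega$ of an explicit partition of $e$-weight $w$. We only need $v_2$ itself together with \autoref{prop:characterization-of-boundary-affine-vertex} and the weight computation in the proof of \autoref{lm:holes-of-pattern}.

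\textbf{Setup.} Write $v_2=\sum_{i=1}^{r-1} b_i e\Lambda_i$. Since $(v_1,v_2)$ is a bad pair, $v_2$ is a boundary affine vertex. By \autoref{prop:characterization-of-boundary-affine-vertex} this gives $w=\sum_{i=1}^{r-1} i(b_i-1)$. Moreover $b_{r-1}=a_{r-1}+1\ge 2$, because $v_1$ is an affine vertex and so $a_{r-1}\ge 1$ by \autoref{def:affine-vertex}. The case $k=0$ is just $v_2\in\mathcal{W}_{r,e,w}$, so fix $1\le k\le e-1$.

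\textbf{Construction for $k\ge1$.} Consider the beta numbers
\[
  \beta_r=0,\qquad \beta_i=e\sum_{m=i}^{r-1} b_m - k \quad (1\le i\le r-1).
\]
These are strictly decreasing: $\beta_{r-1}=b_{r-1}e-k>0$, and consecutive differences for $i\le r-2$ are $b_ie>0$. Hence they define a partition $\lambda$ with at most $r$ parts. Its weight has coefficients $\beta_i-\beta_{i+1}=b_ie$ for $i\le r-2$ and $\beta_{r-1}-\beta_r=b_{r-1}e-k$. Thus $\Omega(\lambda)=v_2-k\Lambda_{r-1}$.

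On the abacus, the last bead sits at row $0$ of the $0$-runner and is already flush. The remaining $r-1$ beads all lie on the $(e-k)$-runner, because $1\le k\le e-1$. Writing $\beta_i=(\sum_{m\ge i}b_m-1)e+(e-k)$, the bead $\beta_i$ sits in row $\sum_{m=i}^{r-1}b_m-1$.

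\textbf{Weight count.} Sliding the beads on the $(e-k)$-runner up until they are flush sends $\beta_i$ to row $r-1-i$. The $e$-weight is therefore
\[
  w_e(\lambda)=\sum_{i=1}^{r-1}\Bigl(\sum_{m=i}^{r-1} b_m-1-(r-1-i)\Bigr)=\sum_{i=1}^{r-1}\Bigl(\sum_{m=i}^{r-1}b_m-(r-i)\Bigr)=\sum_{i=1}^{r-1} i(b_i-1)=w .
\]
Here the middle expression is exactly the $k=0$ weight formula from the proof of \autoref{lm:holes-of-pattern}, and the last equality swaps the order of summation. Hence $v_2-k\Lambda_{r-1}\in\mathcal{W}_{r,e,w}$ for every $0\le k\le e-1$, which is the statement.

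\textbf{Main obstacle.} The delicate step is choosing which beads to move. The natural attempt is to shift the last bead to the right by $k$. That splits the beads the same way, but the $r-1$ top beads stay on the original runner, and the weight then comes out as $w+(r-1)$. Instead we shift the top $r-1$ beads to the left by $k$, which wraps them into the previous row. This wrap-around removes exactly one row from each of those beads, and that exactly compensates for the bead now sitting alone on its own runner.
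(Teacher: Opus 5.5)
Your proof is correct and is essentially the paper's argument. The configuration you write down (last bead alone in row $0$, the other $r-1$ beads on one runner in rows $\sum_{m\ge i}b_m-1$) is, up to a horizontal translation with no wrap-around, exactly what the paper gets from the abacus of $v_2$ on the $(e-1)$-runner by shifting $\beta_r$ left and sliding the other beads up one row. Your explicit count using $w=\sum_i i(b_i-1)$ just spells out the paper's assertion that this operation does not change the $e$-weight.
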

\begin{proof}
    An affine vertex in $\mathcal{W}_{r,e,w}$ corresponds to a copy of $\mathcal{W}_{r,e}\big((r)\big)$. This in turn corresponds to abaci with a single non-empty runner carrying all $r$ beads. We may assume this non-empty runner is the $(e-1)$–runner. Recall that $v_2=\sum_{i=1}^{r-1}(\beta_i-\beta_{i+1})\Lambda_i$, where the $\beta_i$ are the beta numbers corresponding to this abacus. We want to construct abaci with the same $e$–weight as this one whose images under $\Omega$ trace out the segment $\operatorname{L}_{v_1,v_2}$, and hence give points in $\mathcal{W}_{r,e,w}$.
    
    Shift the last bead $\beta_r$ to the left by $k$ positions, where $1\le k\le e-1$, and then slide each of the other beads $\beta_i$ up by one step. This operation does not change the $e$–weight, and it preserves the differences $\beta_i-\beta_{i+1}$ for all $i\ne r-1$. Since $\beta_r$ decreases by $k$ and $\beta_{r-1}$ decreases by $e$, the difference $\beta_{r-1}-\beta_r$ decreases by $e-k$. Hence the resulting dominant weight is $v_2-(e-k)\Lambda_{r-1}$, which ranges over $\operatorname{L}_{v_1,v_2}\setminus\{v_2\}$ as $k$ varies. Adding the endpoint $v_2$ recovers the full segment $\operatorname{L}_{v_1,v_2}$, as required. 
\end{proof}
As this segment is contained in the cell $\mathfrak{C}_{v_1}$, we deduce the following:
\begin{Corollary}\label{cor:tail-of-stingray}
    For any bad pair $(v_1,v_2)$ of $\mathcal{W}_{r,e,w}$, the segment $\operatorname{L}_{v_1,v_2}$ is contained in the associated stingray pattern $\operatorname{Stin}_{v_1,v_2}$.
\end{Corollary}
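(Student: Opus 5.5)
The claim is that $\operatorname{L}_{v_1,v_2}\subseteq \operatorname{Stin}_{v_1,v_2}=\bigl(\mathfrak{C}_{v_1}^{\circ}\cap\mathcal{W}_{r,e,w}\bigr)\sqcup\{v_2\}$. By \autoref{lm:tail-of-Wrew} every point of $\operatorname{L}_{v_1,v_2}$ already lies in $\mathcal{W}_{r,e,w}$. Since $v_2$ is included in the stingray pattern by definition, it remains only to show that the points $v_1+k\Lambda_{r-1}$ with $1\le k\le e-1$ lie in the open cell $\mathfrak{C}_{v_1}^{\circ}$.

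For this I will use the description \eqref{eq:distance-equation}. A point $u=v_1+k\Lambda_{r-1}$ lies in $\mathfrak{C}_{v_1}^\circ$ provided two conditions hold:
\begin{enumerate}[label=(\roman*)]
    \item $u$ is dominant, and
    \item $|\langle u-v_1,\alpha^\vee\rangle|<e$ for every $\alpha\in R^+$.
\end{enumerate}

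Dominance is clear, because $v_1$ is an affine vertex in $\mathcal{C}^+$ and we are adding a nonnegative multiple of $\Lambda_{r-1}$. For (ii), note that $u-v_1=k\Lambda_{r-1}$. Writing $\alpha=\varepsilon_i-\varepsilon_j=\alpha_i+\cdots+\alpha_{j-1}$, we get
\[
\langle \alpha^\vee,\Lambda_{r-1}\rangle=\sum_{t=i}^{j-1}\delta_{t,r-1}\in\{0,1\}.
\]
Hence $|\langle u-v_1,\alpha^\vee\rangle|\le k\le e-1<e$ for all $\alpha\in R^+$, which gives (ii).

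At $k=e$ the point is exactly $v_2$, which lies on the boundary of the cell rather than in its interior. This is why $v_2$ is adjoined separately in \autoref{def:regular-stingray}, and the computation above shows the boundary is reached only at that endpoint. There is no real obstacle here. The one point needing care is that \eqref{eq:distance-equation} is stated for lattice points of the dominant chamber, so dominance of $u$ must be checked, and it is immediate.
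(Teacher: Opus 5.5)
Your proof is correct and follows the same route as the paper: \autoref{lm:tail-of-Wrew} gives $\operatorname{L}_{v_1,v_2}\subseteq\mathcal{W}_{r,e,w}$, and the segment then only needs to be placed in the cell centred at $v_1$. You check explicitly that $|\langle k\Lambda_{r-1},\alpha^\vee\rangle|\le k<e$ for $1\le k\le e-1$, and you treat $v_2$ separately as the adjoined boundary point; this makes precise the paper's one-line remark, which mentions only the closed cell $\mathfrak{C}_{v_1}$, whereas the stingray pattern is defined using the open cell $\mathfrak{C}_{v_1}^{\circ}$.
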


Because of \autoref{cor:tail-of-stingray}, we call $\operatorname{L}_{v_1,v_2}$ the \emph{tail} of the stingray (or the \emph{stingray tail}), and we call $\operatorname{Stin}_{v_1,v_2}\setminus\operatorname{L}_{v_1,v_2}$ the \emph{body} of the stingray (or the \emph{stingray body}).


\begin{Example}\label{eg:stingray-pattern-two}
We continue with \autoref{eg:good-pair-affine-vertices} and consider the case $\mathcal{W}_{3,8,5}$; see \autoref{fig:regular-stingray-pattern}, where the stingray patterns are drawn in {\color{red}red} and the regular pattern in {\color{blue}blue}. We also distinguish the stingray tail and the stingray body by using different shades of red.
    \begin{figure}[htbp]
        \centering
        \includegraphics[height=5cm,keepaspectratio]{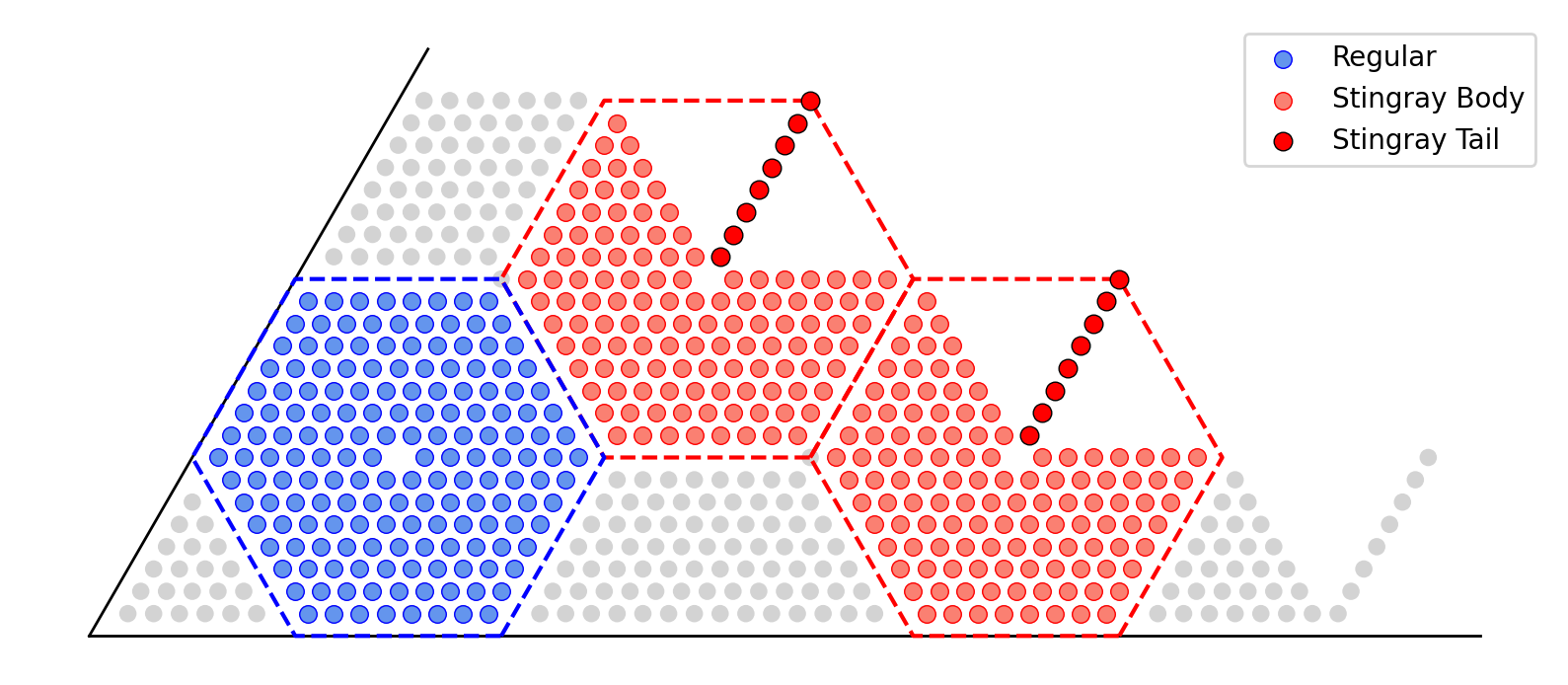}
        \caption{Two stingray patterns and one regular pattern for $(r,e,w)=(3,8,5)$.}
        \label{fig:regular-stingray-pattern}
    \end{figure}
\end{Example}

Our next goal is to show that a stingray tail is isolated in an appropriate sense, which explains why we regard it as a “tail”: it should be separated from the body, as in \autoref{fig:regular-stingray-pattern}.

\begin{Lemma}\label{lm:e-weight-of-1^r}
    Let $\lambda$ be a partition and let $\beta_i$ be its $r$-beta numbers. Form the $e$-abacus of $\lambda$ with $r$ beads, and suppose that each runner has at most one bead on it. Then the $e$–weight of $\lambda$ is given by
    \[w_e(\lambda)=\sum_{i=1}^{r}\lfloor \beta_i/e\rfloor\]
\end{Lemma}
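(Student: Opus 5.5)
The plan is to compute the weight straight from the definition: $w_e(\lambda)$ is the number of upward slides needed to reach $\operatorname{core}_e(\lambda)$. Equivalently, by the quotient description in \autoref{subsec:core-quotient}, it is $\sum_j |\lambda^{(j)}|$ over the runners. The hypothesis that each runner carries at most one bead makes both descriptions completely explicit, so no interaction between beads needs to be controlled.

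First, for each $i$ write $\beta_i = a_i e + b_i$ with $0\le b_i\le e-1$. Then $a_i=\lfloor \beta_i/e\rfloor$ is the row of bead $\beta_i$ and $b_i$ is its runner. Since the beads sit on pairwise distinct runners, the $b_i$ are distinct, and each nonempty runner $b_i$ carries exactly the one bead in row $a_i$.

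Next, compute the quotient entry on runner $b_i$. Here $r_{b_i}=1$, and the sequence of row indices is the single number $a_i$. Viewed as $1$-beta numbers, this corresponds to the one-part partition $\lambda^{(b_i)}=(a_i)$, which is empty if $a_i=0$, so $|\lambda^{(b_i)}|=a_i$. Every other runner is empty and contributes the empty partition. Hence
\[
w_e(\lambda)=\sum_{j=0}^{e-1}|\lambda^{(j)}|=\sum_{i=1}^r a_i=\sum_{i=1}^{r}\lfloor \beta_i/e\rfloor .
\]

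As a sanity check via sliding: on runner $b_i$ the single bead can slide up from row $a_i$ to row $0$ unobstructed, since slides stay on the same runner and no other bead is there. So it contributes exactly $a_i$ upward moves. The resulting abacus has every runner flush and is therefore the core.

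The only point needing care is that the weight is independent of the chosen $r$, so computing it on this particular $r$-bead abacus is legitimate. This is already recorded in the paper, citing \cite[Section~5.3]{mathas-iwahori-hecke}, so there is no real obstacle.
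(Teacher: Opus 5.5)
Your proof is correct and is essentially the paper's: the paper's argument is exactly your ``sanity check'' paragraph, where each bead slides unobstructed from row $\lfloor \beta_i/e\rfloor$ to row $0$ of its own runner, so the upward moves sum to $\sum_i\lfloor\beta_i/e\rfloor$. Your main quotient computation, $\lambda^{(b_i)}=(a_i)$, counts the same moves but additionally relies on the quoted identity $w_e(\lambda)=\sum_j|\lambda^{(j)}|$, so either half of your argument would suffice on its own.
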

\begin{proof}
    A bead $\beta_i$ lies in row $q_i=\lfloor\beta_i/e\rfloor$. Sliding it up to the row $0$ makes that runner flush and contributes exactly $q_i$ upward moves. Summing over all beads shows that the total number of upward moves, hence the $e$–weight, is $\sum_{i=1}^r \lfloor \beta_i/e\rfloor$, as claimed.
\end{proof}

\begin{Lemma}\label{lm:equivalence-of-tail-alcove}
    Let $(v_1,v_2)$ be a bad pair of affine vertices of $\mathcal{W}_{r,e,w}$,
    and let $\operatorname{L}_{v_1,v_2}$ be the stingray tail. Then for an $e$–alcove $\mathcal{A}$,
    \[
      \operatorname{L}_{v_1,v_2}\subset \overline{\mathcal{A}}
      \quad\Longleftrightarrow\quad
      \mathcal{A}\in\mathcal{S}_{v_1}\cap\mathcal{S}_{v_2}.
    \]
\end{Lemma}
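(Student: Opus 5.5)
The equivalence is pure alcove geometry; no abacus input is needed beyond the fact that $v_1,v_2$ are adjacent affine vertices with $v_2=v_1+e\Lambda_{r-1}$. I would work with the Shi-type description of closures: a point $x$ lies in $\overline{\mathcal{A}}$ if and only if
\[
k_\alpha^{(e)}(\mathcal{A})\,e\le\langle\alpha^\vee,x\rangle\le (k_\alpha^{(e)}(\mathcal{A})+1)e \quad\text{for all } \alpha\in R^+ .
\]
The key computation is $\langle\alpha^\vee,\Lambda_{r-1}\rangle$ for $\alpha=\varepsilon_i-\varepsilon_j$ ($i<j$). It equals $1$ if $j=r$ and $0$ otherwise.

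Along $\operatorname{L}_{v_1,v_2}$, write $x_k=v_1+k\Lambda_{r-1}$. Every root with $j<r$ then takes the constant value $\langle\alpha^\vee,v_1\rangle=k_\alpha e$. Every root with $j=r$ takes the value $k_\alpha e+k$, which runs from $k_\alpha e+1$ up to $(k_\alpha+1)e$.

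For ($\Leftarrow$), take $\mathcal{A}\in\mathcal{S}_{v_1}\cap\mathcal{S}_{v_2}$. Then $v_1,v_2\in\overline{\mathcal{A}}$. Since $\overline{\mathcal{A}}$ is convex (an intersection of closed half-spaces), the whole real segment $[v_1,v_2]$ lies in $\overline{\mathcal{A}}$, and in particular $\operatorname{L}_{v_1,v_2}\subset\overline{\mathcal{A}}$.

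For ($\Rightarrow$), suppose $\operatorname{L}_{v_1,v_2}\subset\overline{\mathcal{A}}$. Then $v_2=x_e\in\overline{\mathcal{A}}$, so $\mathcal{A}\in\mathcal{S}_{v_2}$. For $v_1$, use the point $x_1$, which lies in $\operatorname{L}_{v_1,v_2}\subset\overline{\mathcal{A}}$ by hypothesis.
\begin{itemize}
\item For $\alpha$ with $j<r$, we have $\langle\alpha^\vee,x_1\rangle=k_\alpha e$. This is the same value as at $v_1$, so the defining inequality holds at $v_1$.
\item For $\alpha$ with $j=r$, we have $\langle\alpha^\vee,x_1\rangle=k_\alpha e+1$, which lies strictly between $k_\alpha e$ and $(k_\alpha+1)e$ because $e\ge 2$. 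Hence the closure interval for $\alpha$ must be exactly $[k_\alpha e,(k_\alpha+1)e]$, and it contains $\langle\alpha^\vee,v_1\rangle=k_\alpha e$.
\end{itemize}
Thus every defining inequality holds at $v_1$, so $v_1\in\overline{\mathcal{A}}$ and $\mathcal{A}\in\mathcal{S}_{v_1}$.

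A slicker route to ($\Rightarrow$) would be to note that $v_1$ is a limit of the segment and closures are closed. That fails here, because $\operatorname{L}_{v_1,v_2}$ is a discrete set that does not accumulate at $v_1$. So the per-root argument above is the intended route.

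The main obstacle is this ($\Rightarrow$) direction: the two cases must be kept separate, namely roots where $\langle\alpha^\vee,\cdot\rangle$ is constant along the tail and roots where it varies. I would also note explicitly that the argument uses only $v_2-v_1=e\Lambda_{r-1}$ and that $v_1$ is an affine vertex. It uses neither which points lie in $\mathcal{W}_{r,e,w}$ nor the ordering of alcoves relative to the origin.
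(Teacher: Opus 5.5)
Your argument is correct, and your ($\Leftarrow$) direction is exactly the paper's convexity argument. For ($\Rightarrow$), however, the paper uses the shortcut you explicitly rejected. It says that $\overline{\mathcal{A}}$ meets the line through $v_1$ and $v_2$ in a closed interval, and that the smallest closed interval containing $\operatorname{L}_{v_1,v_2}$ has endpoints $v_1$ and $v_2$. Since $v_1\notin\operatorname{L}_{v_1,v_2}$ by definition, that smallest interval actually runs from $v_1+\Lambda_{r-1}$ to $v_2$. So convexity and closedness alone do not place $v_1$ in $\overline{\mathcal{A}}$, and the paper's step needs an extra input.

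Your per-root computation supplies exactly that input:
\begin{itemize}
\item $\overline{\mathcal{A}}$ is cut out by the inequalities $k^{(e)}_\alpha(\mathcal{A})\,e\le\langle\alpha^\vee,x\rangle\le(k^{(e)}_\alpha(\mathcal{A})+1)e$, whose bounds lie in $e\mathbb{Z}$. This holds because $\mathcal{A}$ is the nonempty open convex set given by the strict versions, so its closure is given by the non-strict ones.
\item The roots $\varepsilon_i-\varepsilon_j$ with $j<r$ are constant along the line.
\item For $j=r$, the value $\langle\alpha^\vee,v_1\rangle+1$ at $v_1+\Lambda_{r-1}$ lies strictly inside exactly one interval $[ce,(c+1)e]$ because $e\ge 2$. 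This forces that interval to contain $\langle\alpha^\vee,v_1\rangle$.
\end{itemize}

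The paper's version is shorter and conveys the intended picture: the tail ``fills'' the edge $[v_1,v_2]$. Yours is the one that is actually complete, because it uses the integrality of the wall positions rather than convexity alone. It also makes explicit that only $v_2-v_1=e\Lambda_{r-1}$ and the affine-vertex property of $v_1$ are needed.
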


\begin{proof}
    If $\mathcal{A}\in\mathcal{S}_{v_1}\cap\mathcal{S}_{v_2}$ then
    $v_1,v_2\in\overline{\mathcal{A}}$ by definition, and since
    $\overline{\mathcal{A}}$ is convex it contains the whole segment connecting $v_1$ and $v_2$, hence in particular the discrete subset
    $\operatorname{L}_{v_1,v_2}$.

    Conversely, assume $\operatorname{L}_{v_1,v_2}\subset\overline{\mathcal{A}}$.
    The closure $\overline{\mathcal{A}}$ is convex and its
    intersection with the line through $v_1$ and $v_2$ is a closed interval.
    The tail $\operatorname{L}_{v_1,v_2}$ consists of all intermediate lattice
    points between $v_1$ and $v_2$, so the smallest
    closed interval in that line containing $\operatorname{L}_{v_1,v_2}$ has
    endpoints $v_1$ and $v_2$. Hence $v_1,v_2\in\overline{\mathcal{A}}$, i.e.\
    $\mathcal{A}\in\mathcal{S}_{v_1}\cap\mathcal{S}_{v_2}$.
\end{proof}

\begin{Proposition}\label{prop:tail-separate}
    Fix a generic triple $\triple$. Let $\operatorname{Stin}_{v_1,v_2}$ be a stingray pattern contained in
    $\mathcal{W}_{r,e,w}$ and let $\operatorname{L}_{v_1,v_2}$ be the tail of this
    stingray. Then for any $e$--alcove $\mathcal{A}\in \mathcal{S}_{v_1}$ such
    that $\operatorname{L}_{v_1,v_2}\subset \overline{\mathcal{A}}$, we have
    $\mathcal{A}\cap \mathcal{W}_{r,e,w}=\emptyset$.
\end{Proposition}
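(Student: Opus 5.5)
The plan is to translate membership of a point in such an alcove into inequalities on its fundamental-weight coordinates, and then play these off against the weight lower bound \autoref{lem:weight-lower-bound-beta} and the characterisation of boundary affine vertices in \autoref{prop:characterization-of-boundary-affine-vertex}.

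\textbf{Step 1: coordinates of points in $\mathcal{A}$.} Write $v_2=\sum_{i=1}^{r-1}a_ie\Lambda_i$. Then $v_1=v_2-e\Lambda_{r-1}$ and, by \autoref{prop:characterization-of-boundary-affine-vertex}, $w=\sum_{i}i(a_i-1)$. By \autoref{lm:equivalence-of-tail-alcove}, the hypothesis means $\mathcal{A}\in\mathcal{S}_{v_1}\cap\mathcal{S}_{v_2}$, so both $v_1,v_2\in\overline{\mathcal{A}}$. For each positive root $\alpha=\varepsilon_i-\varepsilon_j$, a point $x\in\mathcal{A}$ has $\langle\alpha^\vee,x\rangle$ strictly inside an interval $(ke,(k+1)e)$ whose closure contains both $\langle\alpha^\vee,v_1\rangle$ and $\langle\alpha^\vee,v_2\rangle$. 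Write $x=\sum x_i\Lambda_i$.
\begin{itemize}
\item If $j=r$, these two values differ by exactly $e$. Hence $\langle\alpha^\vee,x\rangle$ lies strictly between them. Taking $\alpha=\alpha_{r-1}$ gives $(a_{r-1}-1)e<x_{r-1}<a_{r-1}e$. Taking $\alpha=\theta$ gives $(\sum a_i-1)e<\sum x_i<(\sum a_i)e$.
\item If $j<r$, the two values coincide and equal a multiple of $e$. Taking $\alpha=\alpha_i$ then shows $x_i\notin e\mathbb{Z}$ and $\lceil x_i/e\rceil\ge a_i$.
\end{itemize}

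\textbf{Step 2: the weight bound forces equality.} Suppose, for contradiction, that $x=\Omega(\lambda)\in\mathcal{A}\cap\mathcal{W}_{r,e,w}$. Set $m_i=\lceil x_i/e\rceil$. Step 1 gives $m_i\ge a_i$ for all $i$, with $m_{r-1}=a_{r-1}$. Then \autoref{lem:weight-lower-bound-beta} gives
\[
w\ \ge\ \sum_i i(m_i-1)\ \ge\ \sum_i i(a_i-1)\ =\ w .
\]
So $m_i=a_i$ for every $i$, and equality holds in \eqref{eq:weight-lower-bound}. Write $x_i=(a_i-1)e+s_i$. Since no $x_i$ is a multiple of $e$, each $s_i$ satisfies $1\le s_i\le e-1$. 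Substituting into the $\theta$-inequality from Step 1 gives
\[
(r-2)e\ <\ \sum_{i=1}^{r-1}s_i\ <\ (r-1)e .
\]

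\textbf{Step 3: analyse the equality case.} In the proof of \autoref{lem:weight-lower-bound-beta}, equality means that the bead configuration obtained after the prescribed slides is already an $e$-core, with no further upward moves available. That configuration has consecutive gaps $s_i\in[1,e-1]$ and total span $\sum s_i>(r-2)e$, and I expect these two facts to contradict flushness.

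Consider the top bead, at position $p+\sum s_i$, where $p$ is the position of the last bead. Flushness requires beads at every position $p+\sum s_i-te$ for $t\ge1$ down to row $0$ of that runner. Each of these positions must coincide with one of the $r-1$ remaining beads, all of which lie in $[p,\,p+\sum s_i)$. Consecutive gaps are $<e$, so each window of length $e$ holds at least one bead. I would count how many beads the windows $(p+\sum s_i-te,\ p+\sum s_i-(t-1)e)$ must contain, using that the span exceeds $(r-2)e$. The aim is to show this forces at least $r$ beads strictly below the top one, which is the contradiction. Alternatively, one could apply Step 1 to the remaining roots $\varepsilon_i-\varepsilon_r$, which constrain the partial sums $\sum_{t\ge i}s_t$ to lie in $((r-1-i)e,(r-i)e)$, and read off directly that no bead can sit directly above another on its runner in the required way.

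\textbf{Main obstacle.} This last step, excluding the equality case of \autoref{lem:weight-lower-bound-beta}, is where I expect the real work to be. I would first try the partial-sum constraints from all roots $\varepsilon_i-\varepsilon_r$, since they pin down $\sum_{t\ge i}s_t$ tightly. Those constraints should show that the bead configuration is spread too widely to be flush.
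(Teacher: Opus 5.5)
Your Steps 1 and 2 are correct. Step 1 is also slightly cleaner than the paper's opening, which first shows $x_i\le a_ie$ (in your notation) via \autoref{lem:affine-vertex-dominates}. You instead get $(\sum_{k\ge i}a_k-1)e<\sum_{k\ge i}x_k<(\sum_{k\ge i}a_k)e$ for every root $\varepsilon_i-\varepsilon_r$ directly from $v_1,v_2\in\overline{\mathcal{A}}$.

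The gap is that the proof stops exactly where the contradiction has to be produced. Step 3 is a plan rather than an argument, and it carries the whole content of the proposition. Step 2 cannot finish on its own, because \autoref{lem:weight-lower-bound-beta} only sees the individual coordinates $x_i$, and here it is tight: $\sum_i i(m_i-1)=w$. Moreover, ``equality means the slid configuration is already an $e$-core'' is not supplied by that lemma, which is only an inequality. You need the exact identity $w_e(\lambda)=\sum_k k(m_k-1)+w_e(\lambda')$, where $\lambda'$ has beta numbers $\beta_r+\sum_{k\ge i}s_k$.

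The missing idea is that $x$ lies in an \emph{open} $e$-alcove. So no $\beta_i-\beta_j=\langle(\varepsilon_i-\varepsilon_j)^\vee,x\rangle$ is divisible by $e$, and the abacus of $\lambda$ has at most one bead per runner. The same holds for $\lambda'$, whose beta numbers have the same residues. Then \autoref{lm:e-weight-of-1^r} gives the identity above. Furthermore, $\lambda'$ can be a core only if all its beads lie in row $0$. But its top bead sits at $\beta_r+\sum_i s_i>(r-2)e\ge e$, since $r\ge 3$, which is the contradiction.

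Your window count can also be completed. Let $T$ be the top position. Flushness forces beads at $T-te$ for $1\le t\le r-2$. Each open window $(T-te,T-(t-1)e)$ then contains a further bead, because consecutive gaps are $<e$. Together with $\beta_r<T-(r-2)e$, this gives $2r-3\ge r$ beads below $T$, but only $r-1$ exist.

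The paper avoids any equality-case analysis. With one bead per runner, $w_e(\lambda)=\sum_i\lfloor\beta_i/e\rfloor\ge\sum_{i=1}^{r-1}\lfloor\frac{1}{e}\sum_{k\ge i}x_k\rfloor$. By your own Step 1 bounds, this equals $\sum_k k a_k-(r-1)$, which exceeds $w=\sum_k k a_k-\frac{r(r-1)}{2}$ by $\frac{(r-1)(r-2)}{2}>0$. Bounding $\lfloor\beta_i/e\rfloor$ through partial sums, rather than through coordinatewise ceilings, is exactly where the slack missing from your Step 2 comes from.
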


\begin{proof}
    By \autoref{lm:equivalence-of-tail-alcove}, it suffices to prove the case when $\mathcal{A}\in\mathcal{S}_{v_2}$. Write $v_2 = \sum_{i=1}^{r-1} b_i e \,\Lambda_i$.
    Since $(v_1, v_2)$ is a bad pair, $v_2$ is a boundary affine vertex, so by
    \autoref{prop:characterization-of-boundary-affine-vertex} the $e$--weight (of any partition whose image under $\Omega$ is $v_2$) satisfies
    \begin{equation}\label{eq:target-weight}
        w = \sum_{i=1}^{r-1} i\,(b_i-1).
    \end{equation}
    Suppose, for a contradiction, that there exists a point $Q = \sum_{i=1}^{r-1} x_i \Lambda_i \in \mathcal{A} \cap \mathcal{W}_{r,e,w}$.

    Assume first that $x_j > b_j e$ for some $j$. By \autoref{lem:affine-vertex-dominates},
    there is an affine vertex $v'=\sum_{i=1}^{r-1} c_i e\Lambda_i \in \mathcal{W}_{r,e,w}$
    with $c_i e\ge x_i$ for all $i$, and in particular $c_j e\ge x_j>b_j e$, so $c_j>b_j$.
    The distance condition \eqref{eq:distance-equation} for each simple root
    $\alpha_i$ gives $x_i> (b_i-1)e$, which forces $c_i\ge b_i$ for all $i$.
    Thus $v_2$ cannot be a boundary affine vertex by \autoref{cor:equivalent-boundary-affine-vertex}, contradicting our assumption.
    Hence we must have
    \begin{equation}\label{eq:xi-le-bie}
        x_i \le b_i e \qquad\text{for all } 1\le i\le r-1.
    \end{equation}
    As $Q\in \mathcal{A}$, there exists some $i$ such that the inequality in
    \eqref{eq:xi-le-bie} is strict.

    Let $\lambda$ be a partition with $\Omega(\lambda)=Q$
    and let $(\beta_1,\dots,\beta_r)$ be its $r$--beta numbers. Then
    \begin{equation}\label{eq:beta-diffs-x}
        \beta_i - \beta_r
        = \sum_{k=i}^{r-1} (\beta_k - \beta_{k+1})
        = \sum_{k=i}^{r-1} x_k
        \qquad (1\le i\le r-1).
    \end{equation}
    Since $Q$ lies in the interior of an $e$--alcove, by
    \autoref{lm:interior-of-alcove-index} we know that $Q\in\mathcal{W}_{r,e,w}(1^r)$.
    Therefore, by \autoref{lm:e-weight-of-1^r},
    \begin{equation}\label{eq:we-lambda-start}
        w_e(\lambda)
        = \sum_{i=1}^{r} \Big\lfloor \frac{\beta_i}{e} \Big\rfloor
        \ge \sum_{i=1}^{r-1} \Big\lfloor \frac{\beta_r + \sum_{k=i}^{r-1} x_k}{e} \Big\rfloor.
    \end{equation}

    Because $Q$ lies in an alcove whose closure contains the boundary vertex $v_2$, the
    distance condition \eqref{eq:distance-equation} applied to the root
    $\alpha_i+\cdots+\alpha_{r-1}$
    shows that, for each $i$,
    \begin{equation}\label{eq:sum-xk-between}
        \sum_{k=i}^{r-1} b_k e - e
        < \sum_{k=i}^{r-1} x_k
        < \sum_{k=i}^{r-1} b_k e,
    \end{equation}
    where the right-hand inequality follows from \eqref{eq:xi-le-bie}.  Dividing
    \eqref{eq:sum-xk-between} by $e$ gives
    \begin{equation*}
        \sum_{k=i}^{r-1} b_k - 1
        < \frac{1}{e}\sum_{k=i}^{r-1} x_k
        < \sum_{k=i}^{r-1} b_k,
    \end{equation*}
    so
    \begin{equation}\label{eq:floor-sum-xk}
        \Big\lfloor \frac{1}{e}\sum_{k=i}^{r-1} x_k \Big\rfloor
        = \sum_{k=i}^{r-1} b_k - 1.
    \end{equation}
    Thus
    \begin{equation}\label{eq:floor-beta}
        \Big\lfloor \frac{\beta_r + \sum_{k=i}^{r-1} x_k}{e} \Big\rfloor
        = \Big\lfloor\frac{\beta_r}{e} + \frac{1}{e}\sum_{k=i}^{r-1} x_k\Big\rfloor
        \ge \sum_{k=i}^{r-1} b_k - 1.
    \end{equation}
    Substituting \eqref{eq:floor-beta} into \eqref{eq:we-lambda-start} yields
    \begin{align}
        w_e(\lambda)
        &\ge \sum_{i=1}^{r-1} \Big( \sum_{k=i}^{r-1} b_k - 1 \Big)
        \nonumber\\[0.2em]
        &= \sum_{k=1}^{r-1} k\,b_k - (r-1). \label{eq:we-lower-bound}
    \end{align}

    On the other hand, from \eqref{eq:target-weight},
    \begin{align}
        w
        &= \sum_{k=1}^{r-1} k\,(b_k-1)
        = \sum_{k=1}^{r-1} k\,b_k - \sum_{k=1}^{r-1} k \nonumber\\[0.2em]
        &= \sum_{k=1}^{r-1} k\,b_k - \frac{r(r-1)}{2}. \label{eq:w-explicit}
    \end{align}
    For $r\ge3$ we have $\frac{r(r-1)}{2} > r-1$, so combining
    \eqref{eq:we-lower-bound} and \eqref{eq:w-explicit} gives
    \begin{equation*}
        w_e(\lambda)
        \ge \sum_{k=1}^{r-1} k\,b_k - (r-1)
        > \sum_{k=1}^{r-1} k\,b_k - \frac{r(r-1)}{2}
        = w.
    \end{equation*}
    Thus $w_e(\lambda) > w$, contradicting the assumption that
    $Q\in\mathcal{W}_{r,e,w}$. Therefore no such $Q$ can exist, and we conclude that
    $\mathcal{A}\cap\mathcal{W}_{r,e,w}=\emptyset$.
\end{proof}

In contrast, a regular pattern does not exhibit this separation behavior. The following result explains the terminology. 
\begin{Proposition}\label{prop:regular-pattern}
    Fix a generic triple $\triple[3][e][w]$.  
    Let $\operatorname{Reg}_{v_1, v_2}$ be a regular pattern contained in
    $\mathcal{W}_{r,e,w}$, associated to a good pair $(v_1,v_2)$. Then for
    each $e$--alcove $\mathcal{A}\in\mathcal{S}_{v_1}$ all lattice points in
    the interior of $\mathcal{A}$ lie in $\mathcal{W}_{r,e,w}$, that is
    \[
        \mathcal{A}^\circ \cap P^+ \subset \mathcal{W}_{r,e,w}.
    \]
\end{Proposition}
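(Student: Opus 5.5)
The plan is to produce, for every lattice point $Q=x_1\Lambda_1+x_2\Lambda_2$ in the interior of an $e$-alcove $\mathcal{A}\in\mathcal{S}_{v_1}$, an explicit $3$-bead abacus of $e$-weight exactly $w$ with $\Omega$-image $Q$. Since $r=3$, write $v_1=a_1e\Lambda_1+a_2e\Lambda_2$ and $v_2=a_1e\Lambda_1+(a_2+1)e\Lambda_2$.

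\emph{Step 1: reduce to one inequality.} Since $Q$ lies in an open $e$-alcove, none of $x_1,x_2,x_1+x_2$ is divisible by $e$. For a parameter $t\ge 0$ put
\[
\beta_3=t,\qquad \beta_2=t+x_2,\qquad \beta_1=t+x_1+x_2 .
\]
These are strictly decreasing non-negative integers, so they give a partition with at most three parts and $\Omega$-image $Q$. Their residues mod $e$ are pairwise distinct, so each runner carries at most one bead. Hence \autoref{lm:e-weight-of-1^r} gives the weight as $\lfloor t/e\rfloor+\lfloor (t+x_2)/e\rfloor+\lfloor (t+x_1+x_2)/e\rfloor$. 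Put $c=\lfloor x_2/e\rfloor$ and $d=\lfloor (x_1+x_2)/e\rfloor$.

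\emph{Step 2: realise every weight $\ge c+d$.} As $t$ runs over $0,\dots,e-1$, the first floor stays $0$. Each of the other two floors jumps by exactly one, at the values of $t$ where $t$ plus the (nonzero) residue of $x_2$, resp.\ $x_1+x_2$, reaches $e$. So the weight takes each of the values $c+d$, $c+d+1$, $c+d+2$. Replacing $t$ by $t+ke$ slides all three beads down $k$ rows and adds $3k$ to the weight. Therefore every integer $\ge c+d$ is the $e$-weight of some such partition, and $Q\in\mathcal{W}_{3,e,w}$ as soon as $w\ge c+d$.

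\emph{Step 3: bound $c+d$.} Because $v_1\in\overline{\mathcal{A}}$, the distance description \eqref{eq:distance-equation} (with strict inequalities on the open alcove) applies to the roots $\alpha_2$ and $\alpha_1+\alpha_2$. It gives $c\in\{a_2-1,a_2\}$ and $d\in\{a_1+a_2-1,a_1+a_2\}$, so $c+d\le a_1+2a_2$.

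\emph{Step 4: show $w\ge a_1+2a_2$.} This is the main step and uses the good-pair hypothesis. Since $v_2\in\mathcal{W}_{3,e,w}$, \autoref{lm:holes-of-pattern} gives $w\ge (a_1-1)+2a_2$ and $w\equiv a_1+2a_2-1 \pmod 3$.

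If equality $w=a_1+2a_2-1$ held, then \autoref{prop:characterization-of-boundary-affine-vertex} would make $v_2$ a boundary affine vertex. This contradicts $v_2$ being interior in a good pair, since an interior vertex is strictly dominated by a boundary vertex. Hence by the congruence $w\ge a_1+2a_2+2>c+d$, and Step 2 finishes the argument.

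The hypothesis $v_1\notin\mathcal{W}_{3,e,w}$ is then not even needed; it is consistent with the congruence, since $w\not\equiv a_1+2a_2-3 \pmod 3$. The only delicate points are the careful bookkeeping of which floor values occur in Step 3 for each of the (up to six) dominant alcoves around $v_1$, and checking that $t\ge 0$ keeps all beads in valid positions.
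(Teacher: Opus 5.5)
Your proof is correct, and it takes a different route from the paper's. The paper picks one explicit lattice point $P_1,\dots,P_6$ in each of the six alcoves of $\mathcal{S}_{v_1}$. It builds an abacus for each by shifting beads of a single-runner abacus of $v_2$ and then sliding all beads up one row; this is where $k>0$, i.e.\ $v_2$ not boundary, is used. It then appeals to \autoref{lm:interior-of-alcove-index} to pass from one point to the whole alcove interior. You instead treat every interior lattice point $Q$ directly. You parametrise all abaci with image $Q$ by the position $t$ of the last bead, and find that the attainable $e$-weights are exactly the integers $\ge\lfloor x_2/e\rfloor+\lfloor(x_1+x_2)/e\rfloor$, a sum of Shi coefficients of the alcove. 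You bound this threshold by $a_1+2a_2$ on the cell of $v_1$, and get $w\ge a_1+2a_2+2$ from the good-pair hypothesis together with the congruence mod $3$.

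Your route needs neither \autoref{lm:interior-of-alcove-index} nor a case check over six alcoves, and it shows that the hypothesis $v_1\notin\mathcal{W}_{3,e,w}$ is automatic. It also explains the role of $r$. For general $r$ the same computation bounds the threshold on the cell of $v_1$ by $\sum_i i\,a_i$, while a good pair only forces $w\ge\sum_i i\,a_i+2r-1-\binom{r}{2}$. So your argument also covers $r=4$, whereas for $r=5$ the guarantee drops to $w\ge\sum_i i\,a_i-1$, which is exactly what happens in the counterexample $(5,6,15)$ of the subsequent remark. Finally, it avoids a slip in the paper's written construction. Shifting beads of the $v_2$ abacus within their rows keeps $\beta_2-\beta_3$ near $(a_2+1)e$, so it produces points around $v_2$ (e.g.\ $(a_1e-2)\Lambda_1+((a_2+1)e+1)\Lambda_2$) rather than $P_5$. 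Your free choice of $t$ supplies the correct row placement automatically.

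Two small points should be made explicit. In Step 2, the value $c+d+1$ is attained only because the two jumps occur at different values of $t$. This holds because the residues of $x_2$ and $x_1+x_2$ differ, i.e.\ $x_1\not\equiv 0\pmod{e}$, and it is needed when $w\equiv c+d+1\pmod{3}$. In Step 4 you can bypass \autoref{prop:characterization-of-boundary-affine-vertex}. The boundary vertex $\sum_i c_ie\Lambda_i\in\mathcal{W}_{3,e,w}$ that strictly dominates $v_2$ gives $w\ge(c_1-1)+2(c_2-1)>(a_1-1)+2a_2$ directly by \autoref{lm:holes-of-pattern}.
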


\begin{proof}
    When $r=3$, the set $\mathcal{S}_{v_1}$ consists of six $e$–alcoves. To prove the statement, we construct an explicit point in each of these alcoves; the result then follows from \autoref{lm:interior-of-alcove-index}. Write $v_1=a_1e\Lambda_1+a_2e\Lambda_2$. One checks that
    \[
    P_1=(a_1e+1)\Lambda_1+(a_2e+1)\Lambda_2,\quad
    P_2=(a_1e+2)\Lambda_1+(a_2e-1)\Lambda_2,\quad
    P_3=(a_1e+1)\Lambda_1+(a_2e-2)\Lambda_2,
    \]
    \[
    P_4=(a_1e-1)\Lambda_1+(a_2e-1)\Lambda_2,\quad
    P_5=(a_1e-2)\Lambda_1+(a_2e+1)\Lambda_2,\quad
    P_6=(a_1e-1)\Lambda_1+(a_2e+2)\Lambda_2
    \]
    lie in the six alcoves adjacent to $v_1$, hence in the six elements of $\mathcal{S}_{v_1}$. It remains to show that these six points lie in $\mathcal{W}_{3,e,w}$.
    
    By assumption $(v_1,v_2)$ is a good pair. Write $v_2=b_1e\Lambda_1+b_2e\Lambda_2$, so $b_1=a_1$ and $b_2=a_2+1$. Consider the abacus configuration of a partition $\lambda$ with $\Omega(\lambda)=v_2$, and let $\beta_1,\beta_2,\beta_3$ be its $3$–beta numbers. Then the three beads lie on a single runner. If $\beta_3$ is on row $k$, then $\beta_2$ is on row $k+b_2$ and $\beta_1$ is on row $k+b_2+b_1$.
    
    By \autoref{lm:holes-of-pattern} and \autoref{prop:characterization-of-boundary-affine-vertex}, we have $(b_1-1)+2(b_2-1)+3k=w$ with $k>0$. We explain the construction for $P_5$; the other points are handled similarly.
    
    For the abacus of $v_2$, we may choose the unique non-empty runner to be the $(e-1)$–runner (recall $e>r=3$). Now shift the bead $\beta_1$ two steps to the left and shift the bead $\beta_3$ one step to the left. The resulting abacus represents the point $P_5$. Its $e$–weight is
    \[
        w' = k+(k+b_2+b_1)+(k+b_2)
    \]
    by \autoref{lm:e-weight-of-1^r}, so $w'=w+3$. Finally, slide all three beads up by one step which is possible because $k>0$. This does not change the image under $\Omega$, and it reduces the $e$–weight by $3$, so we obtain an abacus of $e$–weight $w$ still mapping to $P_5$. Hence $P_5\in\mathcal{W}_{3,e,w}$. The same argument applies to $P_1,P_2,P_3,P_4,P_6$.
\end{proof}

\begin{Remark}
    One might expect \autoref{prop:regular-pattern} to hold for all $r\ge 3$, but this is not the case. A counterexample occurs for $(r,e,w)=(5,6,15)$, with the good pair $v_1=(42,6,6,6)$ and $v_2=(42,6,6,12)$. It would be interesting to find a finer notion of a “perfect pair” for which \autoref{prop:regular-pattern} remains valid and this should be related to the Bruhat order.
\end{Remark}

\begin{Remark}\label{rmk:github-repository}
    We refer the reader to \href{https://github.com/taoqin-math/Stingray-Pattern-of-Dominant-Weights}{github repository} for more figures in $\mathfrak{sl}_3$, interactive figures in $\mathfrak{sl}_4$, and the code used to generate them.
\end{Remark}

\subsection{\texorpdfstring{Index the \(e\)-alcoves in \(\mathfrak{sl}_r\)}{Index the e-alcoves in sl(r)}}\label{subsec:index-alcoves}
From \autoref{fig:labels-of-simplicial-composition-1} and \autoref{fig:labels-of-simplicial-composition-2}, one observes that the green simplices (that is, the copies of $\mathcal{W}_{3,e}(1,1,1)$) are precisely the interiors of the $e$-alcoves in the fundamental chamber of $\mathfrak{sl}_3$. These simplices are in bijection with the $3$-partitions of $w$ of type $(1,1,1)$, so we may use such $3$-partitions to label the corresponding alcoves. More generally, the next result shows that one can index a subset of the dominant $e$-alcoves of $\mathfrak{sl}_r$ by the set of $r$-partitions of $w$ of type $(1^r)$.

For any $r$ and $w$, the $r$-partitions of $w$ of type $(1^r)$ are in bijection with the weak compositions of $w$ of length $r$: given such an $r$-partition, take the lengths of its components as the parts of the corresponding composition. For this reason, from now on we do not distinguish between these two sets, and we often write weak compositions to simplify the notation. We fix a generic triple $\triple$ throughout this section.

\begin{Lemma}\label{lem:fundamental-e-alcove-simplex}
    The set of dominant weights in the interior of the fundamental
    $e$--alcove, $\mathcal{A}_{0}^{(e)\,\circ}\cap P^+$, is the set of lattice points
    of an $(r-1)$-dimensional simplex of dilation factor $e-r$ in the sense of
    \autoref{def:simplex-lattice-points}. 
\end{Lemma}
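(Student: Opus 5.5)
Write a dominant weight as $x=\sum_{i=1}^{r-1} x_i\Lambda_i$ with $x_i\in\mathbb{Z}$. Then $\langle\alpha_i^\vee,x\rangle=x_i$ and $\langle\theta^\vee,x\rangle=x_1+\cdots+x_{r-1}$. The definition of $\mathcal{A}_0^{(e)}$ (which is already open) shows that $x\in\mathcal{A}_0^{(e)\,\circ}\cap P^+$ if and only if
\[
x_i\ge 1 \ (1\le i\le r-1) \quad\text{and}\quad x_1+\cdots+x_{r-1}\le e-1 .
\]
The plan is to turn these inequalities into the defining equation of $\Delta^{r-1}(e-r)$ by adding one slack variable and shifting.

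Define $g_i:=x_i-1\ge 0$ for $1\le i\le r-1$, and $g_r:=e-1-\sum_{i=1}^{r-1}x_i\ge 0$. Then
\[
\sum_{i=1}^{r} g_i=\sum_{i=1}^{r-1}x_i-(r-1)+e-1-\sum_{i=1}^{r-1}x_i=e-r .
\]
So $(g_1,\dots,g_r)$ runs exactly over the lattice points of $\Delta^{r-1}(e-r)\subset\mathbb{R}^r$; these are the $g_0,\dots,g_d$ of \autoref{def:simplex-lattice-points} with $d=r-1$, up to reindexing. Conversely, every lattice point of $\Delta^{r-1}(e-r)$ gives back a valid $x$ via $x_i=g_i+1$.

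Now define the affine linear map
\[
\phi:\mathbb{R}^r\to P\otimes\mathbb{R},\qquad \phi(g_1,\dots,g_r)=\sum_{i=1}^{r-1}(g_i+1)\Lambda_i .
\]
This is a linear map (forgetting $g_r$) plus the constant $\rho$, so it is affine linear in the sense of \autoref{def:affine-linear-map}. By the computation above, $\phi$ maps $\Delta^{r-1}(e-r)\cap\mathbb{Z}^r$ bijectively onto $\mathcal{A}_0^{(e)\,\circ}\cap P^+$.

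The only subtle step is injectivity, because $\phi$ ignores $g_r$ and so is not injective on all of $\mathbb{R}^r$. I would deal with this exactly as in the proof of \autoref{thm:simplex-of-core-weights}. The cleanest fix is to precompose $\phi$ with a linear automorphism of $\mathbb{R}^r$, namely $g_r\mapsto\sum_i g_i$, so that the resulting map is injective on $\mathbb{R}^r$ and coincides with $\phi$ on the hyperplane $\sum g_i=e-r$. Alternatively, one can note that on the affine hull of $\Delta^{r-1}(e-r)$ the coordinate $g_r$ is determined by the others, so injectivity there is immediate. Here $e>r$ guarantees $e-r>0$, so the dilation factor is positive as the definition requires.

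As a consistency check, there are $\binom{e-1}{r-1}$ such points, which matches \autoref{cor:simplex-points-counting} with $j=r$ and agrees with $\mathcal{W}_{r,e}(1^r)$.
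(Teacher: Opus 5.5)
Your argument is essentially the paper's proof. You use the same shift $x_i\mapsto x_i-1$ plus a slack coordinate (the paper calls it $x_0$ and puts it first; you call it $g_r$ and put it last), which gives the same bijection onto $\Delta^{r-1}(e-r)\cap\mathbb{Z}^r$ with the same inverse $\sum_{i=1}^{r-1}(g_i+1)\Lambda_i$.

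One correction to your injectivity paragraph: your ``cleanest fix'' cannot work. $P\otimes\mathbb{R}$ has dimension $r-1$, so no affine map $\mathbb{R}^r\to P\otimes\mathbb{R}$ is injective, and precomposing with an automorphism does not change its rank. Your alternative is the right one: injectivity on the affine hull $\sum_i g_i=e-r$. That is the only meaningful reading of \autoref{def:simplex-lattice-points} here, and it is what the paper tacitly uses.
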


\begin{proof}
  By \autoref{subsubsec:level-e-action}, a dominant weight
  $v=\sum_{i=1}^{r-1} a_i\Lambda_i$ lies in $\mathcal{A}_{0}^{(e)\,\circ}$ if
  and only if
  \[
    a_i\in\mathbb{Z}_{>0}
    \quad\text{for all }1\le i\le r-1,
    \qquad
    \sum_{i=1}^{r-1} a_i \le e-1.
  \]

  Let $b_i := a_i - 1 (1\le i\le r-1)$, define integers
  \[
    x_0 := e - r - \sum_{i=1}^{r-1} b_i,\quad
    x_i := b_i \ (1\le i\le r-1).
  \]
  It is straightforward to check that this construction gives a bijection
  between $\mathcal{A}_{0}^{(e)\,\circ}\cap P^+$ and the lattice points of
  the standard simplex $\Delta^{\,r-1}(e-r)\cap\mathbb{Z}^r$, with inverse
  map $(x_0,\dots,x_{r-1})\mapsto\sum_{i=1}^{r-1}(x_i+1)\Lambda_i$.

  Hence $\mathcal{A}_{0}^{(e)\,\circ}\cap P^+$ is the lattice point set of
  an $(r-1)$--simplex of dilation factor $e-r$, as claimed.
\end{proof}

\begin{Lemma}\label{lm:interior-of-alcove-index}
    In $\mathcal{W}_{r,e,w}(1^r)$, each copy of $\mathcal{W}_{r,e}(1^r)$
    (corresponding to an $r$–partition of $w$) is exactly the set of dominant weights in the interior of some dominant $e$–alcove.
\end{Lemma}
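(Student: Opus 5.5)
Fix a weak composition $\gamma=(\gamma_1,\dots,\gamma_r)$ of $w$; it records the row $\gamma_i$ of the single bead on the $i$-th non-empty runner (counting from the left). The copy of $\mathcal{W}_{r,e}(1^r)$ labelled by $\gamma$ consists of the points $\Omega(\lambda)$, where $\lambda$ has its $r$ beads on distinct runners $s_1<\dots<s_r$, with the bead on $s_i$ in row $\gamma_i$. The plan is to compute every Shi coefficient $k^{(e)}_{\alpha}(\Omega(\lambda))$, for $\alpha=\varepsilon_p-\varepsilon_q$ with $p<q$, and to show two things. First, it depends only on $\gamma$ and not on the runner positions $s_i$. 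Second, $\langle\alpha^\vee,\Omega(\lambda)\rangle$ is never a multiple of $e$. Together these say the whole copy lies in the interior of a single $e$-alcove $\mathcal{A}_\gamma$, determined by \eqref{eq:shi-condition}.

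The computation goes as follows. We have $\langle(\varepsilon_p-\varepsilon_q)^\vee,\Omega(\lambda)\rangle=\beta_p-\beta_q$. Write $\beta_p=a e+s$ and $\beta_q=a'e+s'$ with $s\ne s'$ and $0\le s,s'\le e-1$. Then $\beta_p-\beta_q=(a-a')e+(s-s')$, where $0<|s-s'|<e$. Hence $\beta_p-\beta_q\notin e\Z$, and
\[
  \lfloor(\beta_p-\beta_q)/e\rfloor = a-a' \text{ if } s>s', \qquad \lfloor(\beta_p-\beta_q)/e\rfloor = a-a'-1 \text{ if } s<s'.
\]
The rows $a,a'$ are entries of $\gamma$. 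Which beads are $\beta_p$ and $\beta_q$, and the comparison of $s$ with $s'$, are both governed by the order of the beta numbers, i.e.\ by the lexicographic order on (row, runner index). That order depends only on $\gamma$ and on the left-to-right order of the runners, not on the actual gaps $g_k$. So the Shi vector is a function of $\gamma$ alone. The copy is therefore contained in $\mathcal{A}_\gamma^{\circ}\cap P^+$, and $\mathcal{A}_\gamma$ is dominant because $\Omega(\lambda)\in P^+$.

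The reverse inclusion is the main obstacle: every dominant weight in $\mathcal{A}_\gamma^{\circ}$ must come from the copy labelled $\gamma$. I would argue by counting. By \autoref{thm:simplex-of-core-weights} and \autoref{cor:simplex-points-counting}, the copy has $\binom{e-1}{r-1}$ points. By \autoref{lem:fundamental-e-alcove-simplex}, $\mathcal{A}_0^{(e)\circ}\cap P^+$ has $\binom{e-1}{r-1}$ points as well, the lattice points of $\Delta^{r-1}(e-r)$. Every $e$-alcove is the image of $\mathcal{A}_0^{(e)}$ under the level-$e$ action $\star_e$, which preserves $P$, so every alcove interior contains exactly this many lattice points. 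Since the copy is contained in $\mathcal{A}_\gamma^{\circ}\cap P$ and both are finite sets of the same size, they are equal. Points of $\mathcal{A}_\gamma^{\circ}$ are automatically dominant because the alcove is dominant.

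It remains to check that $w_0(x)+e\gamma'$ maps $P$ to $P$. This holds because $W_0$ preserves $P$ and $Q\subset P$. As a sanity check, $\gamma=(0,\dots,0)$ should give $\mathcal{A}_0^{(e)}$: its beads lie in row $0$, so $0<\beta_p-\beta_q<e$ and all Shi coefficients vanish.
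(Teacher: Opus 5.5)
Your proof is correct, and its outer structure matches the paper's. The copy avoids every wall because its beads sit on distinct runners. Equality with the alcove's lattice points then follows by comparing cardinalities with $\mathcal{A}_0^{(e)\circ}\cap P^+$, using \autoref{lem:fundamental-e-alcove-simplex} and the transitivity of $\star_e$.

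The genuine difference is the middle step. The paper passes from ``no point of the copy lies on a wall'' to ``the copy lies in a single alcove'' by appealing to convexity of the simplex. That step is stated tersely: lattice points avoiding a hyperplane does not by itself stop the hyperplane from separating two of them. One also needs something like the fact that moving a single runner by one position changes each $\beta_p-\beta_q$ by at most $1$.

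Your explicit formula settles this directly. If $\beta_p$ and $\beta_q$ are the beads on the $u$-th and $v$-th non-empty runners, then $k^{(e)}_{\varepsilon_p-\varepsilon_q}$ equals $\gamma_u-\gamma_v$ when $u>v$ and $\gamma_u-\gamma_v-1$ when $u<v$. The assignment $p\mapsto u$ is fixed by the lexicographic order on the pairs $(\gamma_u,u)$. This also shows that $\mathcal{A}_\gamma$ is dominant. It further gives the Shi vector of $\mathcal{A}_\gamma$ explicitly, which is the computation the paper redoes in the proof of \autoref{thm:cross-wall-action-affine-weyl-group}.

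One small correction to your citations. \autoref{thm:simplex-of-core-weights} and \autoref{cor:simplex-points-counting} count only the core copy. For the copy labelled $\gamma$ you need $\Omega$ to be injective on configurations with $s_1=0$, which is part of \autoref{thm:simplicial-decomposition}(b). Your own formula gives this immediately, since for fixed $\gamma$ the values $\beta_p-\beta_q=(\gamma_u-\gamma_v)e+(s_u-s_v)$ recover every difference $s_u-s_v$.
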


\begin{proof}
    By \autoref{thm:simplex-of-core-weights} and
    \autoref{thm:simplicial-decomposition}, each copy of
    $\mathcal{W}_{r,e}(1^r)$ is the set of lattice points of a simplex in $\mathcal{C}^+\cap P$.

    First, no point of such a simplex lies on the boundary of an $e$–alcove.
    Recall that a point $v=\sum_{i=1}^{r-1} a_i\Lambda_i$ lies on some affine
    hyperplane if and only if there exist indices $i\le j$ such that
    \[
        a_i + a_{i+1} + \cdots + a_j \;\equiv\; 0 \pmod e,
    \]
    equivalently, $\langle v,\alpha_i+\cdots+\alpha_j\rangle\equiv0\pmod e$.
    Let $v$ belong to a copy of $\mathcal{W}_{r,e}(1^r)$ and choose
    $\lambda$ such that $\Omega(\lambda)=v$. Consider an $e$–abacus for $\lambda$
    with $r$–beta numbers $\beta_1>\cdots>\beta_r$. If
    $a_i+\cdots+a_j\equiv0\pmod e$ for some $i\le j$, then
    \[
      (\beta_i-\beta_{i+1})+\cdots+(\beta_j-\beta_{j+1})
      \;=\; \beta_i-\beta_{j+1}
      \;\equiv\; 0 \pmod e,
    \]
    so the beads $\beta_i$ and $\beta_{j+1}$ lie on the same runner. Hence
    some runner contains at least two beads. By
    \autoref{thm:simplicial-decomposition}, this excludes $v$ from the
    component $\mathcal{W}_{r,e,w}(1^r)$, which consists precisely of points
    whose abaci have at most one bead on each runner. Thus no point of a copy
    of $\mathcal{W}_{r,e}(1^r)$ lies on any affine hyperplane, so every such point
    lies in the interior of some $e$–alcove.

    Let $S$ be a fixed copy of $\mathcal{W}_{r,e}(1^r)$.  
    If $S$ intersects any $e$–alcove, then by the above argument and the
    convexity of a simplex, $S$ is contained in the interior of that
    $e$–alcove.

    By \autoref{lem:fundamental-e-alcove-simplex} and the translation
    invariance of the $e$–alcoves, the number of dominant lattice
    points in the interior of any $e$–alcove is equal to the number of
    points of $\mathcal{W}_{r,e}(1^r)$.
    Since $S\subset\mathcal{A}_0^\circ\cap P^+$ and the cardinalities agree,
    we must have $S \;=\; \mathcal{A}_0^\circ\cap P^+$.
\end{proof}

Let $\mathfrak{A}_{r,e,w}$ be the set of dominant $e$-alcoves with non-trivial intersection with $\mathcal{W}_{r,e,w}$, and let $\Gamma_{r}(w)$ be the set of weak compositions of $w$ of length $r$.
Then, by \autoref{lm:interior-of-alcove-index}, there is a bijection between $\mathfrak{A}_{r,e,w}$ and $\Gamma_r(w)$. To be explicit, for any weak composition $\mu\in\Gamma_{r}(w)$, there is exactly one copy of $\mathcal{W}_{r,e}(1^r)$ corresponding to $\mu$. By \autoref{lm:interior-of-alcove-index}, this copy is exactly the set of dominant weights inside a dominant $e$-alcove. We denote this dominant $e$-alcove by $\mathcal{A}_{\mu}$. Then the bijection is given by $\mathcal{A}_{\mu}\leftrightarrow \mu$.

\begin{Example}\label{eg:label-alcoves-without-simplex}
    \autoref{fig:labels-of-simplicial-composition-1} and \autoref{fig:labels-of-simplicial-composition-2} illustrate how to assign labels to the simplices in $W_{3,e,w}(1,1,1)$. In \autoref{fig:labels-of-111} we make these labels more clear from \autoref{fig:labels-of-simplicial-composition-2} by removing the green points. Moreover, we have simplified these labels to use the weak compositions of $w=10$ with length $3$, corresponding to the $3$-partitions of $w$ of type $(1,1,1)$.
    \begin{figure}[htbp]
        \centering
        \captionsetup{type=figure}
        \includegraphics[height=6cm,keepaspectratio]{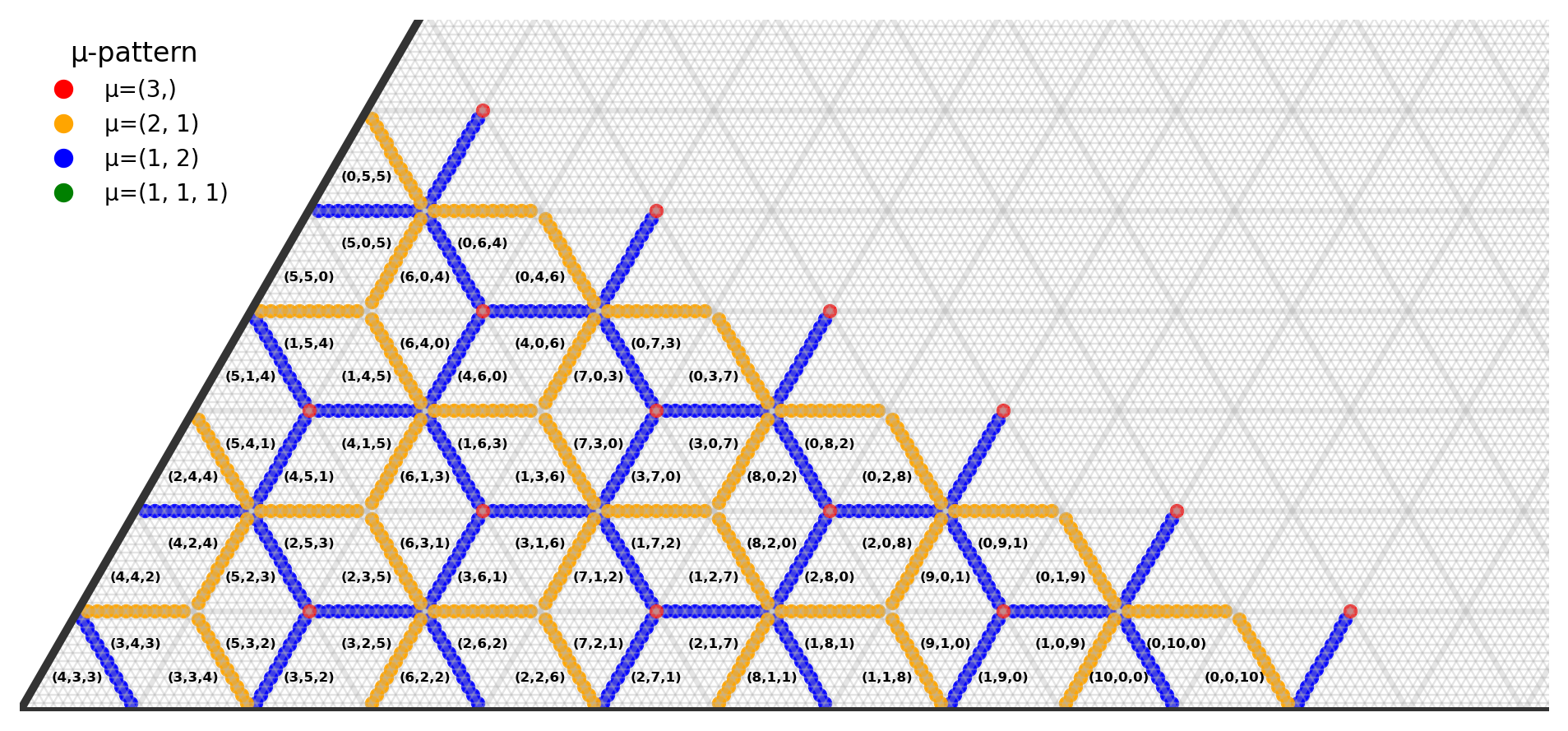}
        \caption{$r=3$, $e=12$, $w=10$}
        \label{fig:labels-of-111}
    \end{figure}
\end{Example}

We now examine these labels in \autoref{eg:label-alcoves-without-simplex} more closely.
    
First, in \autoref{fig:labels-of-111}, the fundamental $e$-alcove carries the label $(4,3,3)$. In general, \autoref{lm:label-of-fundamental-alcove} shows that the label of the fundamental $e$-alcove is given by the ``most balanced'' partition of $w$ of length $r$.
    
Second, how the labels change when cross the walls between adjacent alcoves is related to the action of affine Weyl group on abaci. In \autoref{fig:labels-of-111}, there are three types of walls: {\color{blue}blue}, {\color{Goldenrod}yellow}, and {\color{gray}gray}. The gray walls correspond to empty walls, that is, walls not contained in $W_{3,e,w}$ (see \autoref{fig:labels-of-simplicial-composition-2}). Suppose an alcove has label $(a,b,c)$ with $a+b+c=w=10$. Then one observes:
\begin{itemize}
    \item crossing a blue wall sends $(a,b,c)$ to $(b,a,c)$ (swapping the first two coordinates);
    \item crossing an empty (gray) wall sends $(a,b,c)$ to $(a,c,b)$ (swapping the last two coordinates);
    \item crossing a yellow wall sends $(a,b,c)$ to $(c+1,b,a-1)$.
\end{itemize}

For readers familiar with the action of the affine Weyl group on abaci, these transformations agree with the case of three runners with one bead on each runner at rows $a,b,c$, respectively. The blue, gray, and yellow walls correspond to the Coxeter generators $s_1$, $s_2$, and $s_0$. This also explains why $(4,3,3)$ labels the fundamental alcove: it is characterized by the property that the action of $s_2$ and $s_0$ fixes it, so its label changes under only one wall-crossing.

\medskip
For the generic triple $\triple$, write $w=rm+n$ with $m,n\in\N$ and $0\leq n\leq r-1$. Set
\begin{equation}\label{eq:fundamental-alcove-weak-composition}
    \lambda_{\emptyset}
    = \bigl(
       \overbrace{m+1,\dots,m+1}^{n\ \text{terms}},
       \overbrace{m,\dots,m}^{(r-n)\ \text{terms}}
    \bigr).  
\end{equation}

\begin{Lemma}\label{lm:label-of-fundamental-alcove}
    Under the bijection between $\mathfrak{A}_{r,e,w}$ and $\Gamma_{r}(w)$, the level-$e$ fundamental alcove $\mathcal{A}_0^{(e)}$ corresponds to $\mathcal{A}_{\lambda_{\emptyset}}$, where $\lambda_{\emptyset}$ is defined in \autoref{eq:fundamental-alcove-weak-composition}.
\end{Lemma}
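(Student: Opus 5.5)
We need to show that the copy of $\mathcal{W}_{r,e}(1^r)$ sitting inside $\mathcal{A}_0^{(e)}$ is the one labelled by $\lambda_\emptyset$. By \autoref{lem:fundamental-e-alcove-simplex}, the interior of $\mathcal{A}_0^{(e)}$ contains dominant weights. By \autoref{lm:interior-of-alcove-index}, it therefore suffices to exhibit one point $v\in\mathcal{A}_0^{(e)\circ}\cap P^+$ together with a partition $\lambda$ such that $\Omega(\lambda)=v$, the abacus of $\lambda$ has one bead on each of $r$ distinct runners, and the restricted $e$-quotient row data is the weak composition $\lambda_\emptyset$. Disjointness (\autoref{thm:simplicial-decomposition}) then identifies the whole copy with $\mathcal{A}_0^{(e)}$.

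\textbf{Construction.} Write $w=rm+n$ with $0\le n\le r-1$. I place one bead in row $m+1$ on each of the runners $0,1,\dots,n-1$, and one bead in row $m$ on each of the runners $n,n+1,\dots,r-1$. These runners are distinct because $r<e$. Reading positions, the beads occupy exactly the $r$ consecutive integers $me+n,\,me+n+1,\dots,me+e-1,\,(m+1)e,\dots,(m+1)e+n-1$. These are consecutive modulo the wrap-around from runner $e-1$ to runner $0$: the bead positions are $me+n,\dots,me+r-1$ and $(m+1)e,\dots,(m+1)e+n-1$.

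\textbf{Ordering and gaps.} The $r-n$ beads in row $m$ are smaller than the $n$ beads in row $m+1$. Ordering decreasingly, the consecutive gaps are therefore $1$ within each row, together with a single gap of $(m+1)e-(me+r-1)=e-r+1$ between the last row-$(m+1)$ bead and the first row-$m$ bead. When $n=0$ all gaps are $1$.

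\textbf{Locating the point.} The point $v=\Omega(\lambda)$ thus has coordinates $a_i\ge1$ with $\sum a_i=(r-2)+(e-r+1)=e-1\le e-1$ (or $r-1$ when $n=0$). By the description in the proof of \autoref{lem:fundamental-e-alcove-simplex}, $v\in\mathcal{A}_0^{(e)\circ}$.

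\textbf{Weight and label.} By \autoref{lm:e-weight-of-1^r}, the $e$-weight is $n(m+1)+(r-n)m=w$. The restricted quotient records the rows of the beads read runner by runner from left to right, giving $(m+1,\dots,m+1,m,\dots,m)=\lambda_\emptyset$, with the conventions of the bijection between $r$-partitions of type $(1^r)$ and weak compositions.

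\textbf{Main obstacle.} The step needing the most care is matching the label convention exactly. The label is defined through the restricted $e$-quotient indexed by the non-empty runners in left-to-right order, so I must check that a single bead in row $y$ gives the one-part partition $(y)$ of length at most $1$, whose corresponding entry in the weak composition is $y$. That is what makes the composition read $\lambda_\emptyset$ rather than some permutation of it. I would verify this against \autoref{fig:labels-of-111}: for $(r,e,w)=(3,12,10)$ we get $m=3$, $n=1$, so the bead is in row $4$ on runner $0$ and in row $3$ on runners $1,2$, giving the label $(4,3,3)$. The remaining verifications are routine.
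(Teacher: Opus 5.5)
Your proposal is correct and follows the paper's proof: you use the same abacus (row $m+1$ on runners $0,\dots,n-1$ and row $m$ on runners $n,\dots,r-1$) and the same reduction via \autoref{lm:interior-of-alcove-index} to exhibiting a single point of the $\lambda_{\emptyset}$-copy inside $\mathcal{A}_0^{(e)}$. Your check that $\sum_i a_i = e-1 < e$, together with your separate treatment of $n=0$, is slightly more complete than the paper's argument, which records only $a_n = e-r+1 < e$; you should delete the misstatement that the beads occupy $me+n,\dots,me+e-1$, which you correct in the very next sentence anyway.
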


\begin{proof}
    By \autoref{lm:interior-of-alcove-index}, it suffices to show that there exists a point in the copy indexed by $\lambda_{\emptyset}$ that lies in the fundamental alcove.
    
    Consider the $e$–abacus with $e$ runners, and choose the first $r$ runners. For each runner $0\le i\le n-1$ place a bead at row $m+1$, and for each runner $n\le i\le r-1$ place a bead at row $m$. 
    \begin{center}
        \Abacus[
              dotted cols={2,3,7,8,11,12},
              dotted rows={1},
              runner labels={0,1,2,3,n{-}1,n,n{+}1,7,8,r{-}1,r,11,12,e{-}1},
              bead size=0.58,
              bead sep=0.61,      
              bead font=\scriptsize,
              runner sep=0.08em
            ]{14}{
              41_{m{+}1},
              39^2_{m{+}1},
              35_m,
              33^2_m
            }
    \end{center}
    By definition, this abacus corresponds to a point in the copy of $\mathcal{W}_{r,e,w}(1^r)$ indexed by $\lambda_{\emptyset}$. Let $(\beta_1,\ldots,\beta_r)$ be the associated beta-numbers. The corresponding dominant weight is $\sum_{i=1}^{r-1} a_i\Lambda_i$, where $a_i=\beta_i-\beta_{i+1}$.

    From the abacus configuration we have $\beta_i-\beta_{i+1}=1$ for $i\neq n$ and $\beta_n-\beta_{n+1}=e-r+1$. Hence $a_i=1$ for $i\neq n$ and $a_n=e-r+1<e$, so this point lies in the fundamental $e$–alcove.
\end{proof}


\subsection{Affine Weyl group action}\label{subsec:affine-weyl-group-action}
It is well known that dominant alcoves of $\mathfrak{sl}_r$ can be indexed by $r$–core partitions, and that the affine Weyl group action admits a clean description on the abacus with infinitely many beads and rows; see \cite[Section~4]{bclg-generalised-cores-diophantine} and the references therein for details and examples.

In our setting, however, since we consider only the set of partitions with at most $r$ parts, the abaci we use have finitely many beads and rows. The action of the affine Weyl group does not preserve the length of a partition, so this action is not well defined on our (truncated) abaci. Nevertheless, as in \autoref{subsec:index-alcoves}, since we index only a finite subfamily of alcoves determined by a fixed $e$--weight $w$, we can still analyze the resulting ``action'' within this bounded region.

\smallskip
Fix a generic triple $\triple$. Recall that there is a bijection between $\mathfrak{A}_{r,e,w}$ and $\Gamma_{r}(w)$, as discussed in \autoref{subsec:index-alcoves}. Our first goal is to introduce the ``action'' of the affine Weyl group on $\Gamma_r(w)$ and to induce an action on $\mathfrak{A}_{r,e,w}$ through this bijection.
 
Let $W$ be the affine Weyl group, with simple reflections $\sigma_0,\sigma_1,\cdots,\sigma_{r-1}$. The subgroup generated by $\{\sigma_1,\cdots,\sigma_{r-1}\}$ is the finite Weyl group $W_0$, isomorphic to the symmetric group $\Sym_r$. 

Take $\mu=(\mu_1,\cdots,\mu_r)\in\Gamma_r(w)$, we define the following operations:
\begin{itemize}
    \item For $1\le i\le r-1$, define $\mu\sigma_i:=(\mu_1,\cdots,\mu_{i+1},\mu_{i},\cdots,\mu_r)$, i.e. by swapping the $i$-th entry and the $(i+1)$-th entry.

    \item Define $\mu\sigma_0:=(\mu_r+1,\mu_2,\cdots,\mu_{r-1},\mu_1-1)$, i.e. by swapping the first entry and the last entry, then adding one to the first entry and subtracting one from the last entry.
\end{itemize}

It is immediate that the above operations induce a $W_0$-action on $\Gamma_{r}(w)$. On the other hand, $\mu\sigma_0$ is defined if and only if $\mu_1\ge 1$. Thus, we obtain only a partial action of $W$ on $\Gamma_{r}(w)$.

We define the partial action of $W$ on $\mathfrak{A}_{r,e,w}$ by setting $\mathcal{A}_{\mu}\sigma_i:=\mathcal{A}_{\mu\sigma_i}$ for any $0\le i\le r-1$. The following result is immediate: for $1\le i\le r-1$, $\mathcal{A}_{\mu}\sigma_i=\mathcal{A}_{\mu}$ if and only if $\mu_i=\mu_{i+1}$; and $\mathcal{A}_{\mu}\sigma_0=\mathcal{A}_{\mu}$ if and only if $\mu_1=\mu_r+1$.

\smallskip
Our second goal is to show that this partial action is indeed geometrically compatible, in the sense that applying a $\sigma_i$-action to an $e$-alcove corresponds to crossing one of its walls.

\begin{Theorem}\label{thm:cross-wall-action-affine-weyl-group}
    Fix a generic triple $\triple$ and let $W$ be the affine Weyl group. Take $\mu\in\Gamma_r(w)$ and $\sigma_i\,(0\le i\le r-1)$ to be a simple reflection of $W$. Suppose that $\mu\sigma_i$ is defined. Then either $\mu\sigma_i=\mu$ or $\mathcal{A}_{\mu}\sigma_i$ has a common facet with $\mathcal{A}_{\mu}$.
\end{Theorem}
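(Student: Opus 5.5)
The plan is to work with explicit abacus representatives. For $\mu\in\Gamma_r(w)$, the alcove $\mathcal{A}_\mu$ contains the weights of the following abaci. We choose runners $s_1<\dots<s_r$, with gap variables $g_0,\dots,g_r$ as in the proof of \autoref{thm:simplex-of-core-weights}. On runner $s_k$ we place one bead at row $\mu_k$. By \autoref{lm:interior-of-alcove-index}, every such configuration gives a lattice point in the interior of $\mathcal{A}_\mu$. Two alcoves share a facet exactly when their Shi coefficients (see \eqref{eq:shi-condition}) differ in a single root $\alpha$, by exactly one, and the corresponding hyperplane bounds both. So the strategy is to exhibit two points, one in each alcove, that are arbitrarily close to a common point of a single hyperplane $H^{(e)}_{\alpha,k}$ and lie on either side of it, with no other hyperplane between them.

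For $1\le i\le r-1$ with $\mu_i\neq\mu_{i+1}$, take the configuration for $\mu$ with $g_i=0$, so that runners $s_i$ and $s_{i+1}=s_i+1$ are adjacent. Compare it with the configuration for $\mu\sigma_i$ on the same runners. The two abaci differ only by exchanging the two beads on adjacent runners $s_i,s_i+1$, i.e.\ one bead at row $\mu_i$ on runner $s_i$ and one at row $\mu_{i+1}$ on runner $s_i+1$, versus rows swapped. In terms of beta numbers, these two beads have values $x=\mu_i e+s_i$ and $y=\mu_{i+1}e+s_i+1$. In the other abacus they become $x'=\mu_{i+1}e+s_i$ and $y'=\mu_i e+s_i+1$. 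All other beads are unchanged. Next, compute the sorted beta sequences and the differences $\beta_p-\beta_{p+1}$. The difference of the two beads in question changes from $\pm((\mu_i-\mu_{i+1})e-1)$ to $\pm((\mu_i-\mu_{i+1})e+1)$, up to sign and ordering. The neighbouring differences change by $\mp1$. Hence $\langle\alpha^\vee,\cdot\rangle$ for the root $\alpha=\varepsilon_p-\varepsilon_q$ joining the positions of these two beads moves from $ke-1$ to $ke+1$, with $k=|\mu_i-\mu_{i+1}|$. For every other positive root, the pairing changes by at most $1$ and stays off multiples of $e$, because every other pair of beads lies on different runners and the relevant residues mod $e$ are unchanged or shift within $(0,e)$. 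Checking this is the main obstacle. It requires a careful case split on which other beads lie between $x$ and $y$ in the total order. Nevertheless, all such beads have residues different from $s_i,s_i+1$, so the residue of each other difference cannot cross $0$ mod $e$. Then the midpoint of the two weights lies on $H^{(e)}_{\alpha,k}$ and on no other hyperplane. Thus the segment crosses exactly one wall, and the two alcoves share that facet.

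For $\sigma_0$ (with $\mu_1\ge1$ and $\mu\sigma_0\ne\mu$), run the same argument using the wrap-around adjacency. Take $g_0=g_r=0$, so $s_1=0$ and $s_r=e-1$. Runner $e-1$ at row $a$ and runner $0$ at row $a+1$ are adjacent in position, since $ae+(e-1)$ and $(a+1)e$ differ by $1$. The beads are $x=\mu_1e$ and $y=\mu_re+e-1$. After applying $\sigma_0$, runner $0$ carries row $\mu_r+1$ and runner $e-1$ carries row $\mu_1-1$, giving $x'=(\mu_r+1)e$ and $y'=(\mu_1-1)e+e-1$. Again exactly one pair of beads is swapped across a distance-$1$ adjacency and everything else is fixed, so the same wall-crossing computation applies.

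Finally, in the trivial cases ($\mu_i=\mu_{i+1}$, or $\mu_1=\mu_r+1$) we have $\mu\sigma_i=\mu$ directly, which completes the dichotomy.
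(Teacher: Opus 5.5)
Your proposal is correct and follows essentially the same route as the paper. Both arguments take one-bead-per-runner abaci for $\mu$ and $\mu\sigma_i$ on the same runners, which differ by moving two beads, and check that exactly one positive-root pairing passes a multiple of $e$. The paper phrases this as all Shi coefficients agreeing except one, which differs by one; your midpoint-on-a-single-hyperplane argument, with the residue observation, is equivalent. The only real variation is the $\sigma_0$ case: you put the outer runners at $0$ and $e-1$ so the move is a unit shift across the wrap-around, which makes this case identical to the $\sigma_i$ case. The paper instead keeps runners $e-r,\dots,e-1$ and moves the two beads by $e-r+1$ positions.
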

\begin{proof}
    To show that the two $e$-alcoves share a common facet, it suffices to show that all the Shi coefficients of the two $e$-alcoves are equal except for one, for which the corresponding Shi coefficients differ by one. Assume $\mu=(\mu_1,\cdots,\mu_r)\in\Gamma_r(w)$. Choose an $e$-abacus with $r$ beads such that the non-empty runners are exactly the last $r$ runners: $(e-r),(e-r+1),\cdots,(e-1)$. Moreover, we put a bead $B_t$ at the row $\mu_t$ of the $(e{-}r{+}t{-}1)$-runner. Let $\lambda$ be the partition corresponding to this $e$-abacus and let $P:=\Omega(\lambda)$ be the corresponding dominant weight. Then by definition, $P\in\mathcal{A}_{\mu}\cap \mathcal{W}_{r,e,w}$. 
    
    Let the $r$-beta numbers of $\lambda$ be $\beta_1>\beta_2>\cdots>\beta_r$, then the Shi coefficients are of the form:
    \[
        k^{(e)}_{\alpha_{xy}}(\mathcal{A}_{\mu})=\lfloor \frac{\beta_x-\beta_y}{e}\rfloor,\qquad \alpha_{xy}:=\alpha_x+\alpha_{x+1}+\cdots+\alpha_{y-1}=\epsilon_x-\epsilon_y\, (1\le x<y\le r).
    \]
    
    If $1\le i \le r-1$, then $\mu\sigma_i$ is always defined and equals $\mu$ if and only if $\mu_i=\mu_{i+1}$. Suppose $\mu_i\neq \mu_{i+1}$. From the $e$-abacus of $\lambda$, let $\lambda'$ be the partition corresponding to the $e$-abacus which is obtained by swapping the $(e{-}r{+}i{-}1)$-runner and the $(e{-}r{+}i)$-runner, then $P':=\Omega(\lambda')\in \mathcal{A}_{\mu\sigma_i}\cap \mathcal{W}_{r,e,w}$. 

    We calculate the Shi coefficients. Let $\beta_{x'}$ and $\beta_{y'}$ be the corresponding beta numbers of the beads $B_i$ and $B_{i+1}$ respectively. We may assume $x'<y'$ since the opposite case is similar.
    An immediate observation is that, by swapping the two runners, the order of the beta numbers does not change. 
    
    To be specific, let $\beta'_1>\beta'_2>\cdots>\beta'_{r}$ be the $r$-beta numbers of $\lambda'$, then $\beta'_i=\beta_i$ for $i\not\in\{x',y'\}$, $\beta'_{x'}=\beta_{x'}+1$ and $\beta'_{y'}=\beta_{y'}-1$.
    
    For any positive root $\alpha=\alpha_{xy}$, if $x,y\not\in\{x',y'\}$, then clearly $k^{(e)}_{\alpha_{xy}}(\mathcal{A}_{\mu})=k^{(e)}_{\alpha_{xy}}(\mathcal{A}_{\mu\sigma_i})$. Assume $x=x'$ and $y\neq y'$, let $B_t$ be the bead corresponding to $\beta_y$, then we have:
    \[
        k^{(e)}_{\alpha_{xy}}(\mathcal{A}_{\mu})=\lfloor\frac{\mu_ie+e-r+i-1-(\mu_{t}e+e-r+t-1)}{e}\rfloor=(\mu_i-\mu_t)+\lfloor\frac{i-t}{e}\rfloor;
    \]
    \[
        k^{(e)}_{\alpha_{xy}}(\mathcal{A}_{\mu\sigma_i})=\lfloor\frac{\mu_ie+e-r+i-(\mu_{t}e+e-r+t-1)}{e}\rfloor=(\mu_i-\mu_t)+\lfloor\frac{i-t+1}{e}\rfloor.
    \]
    Since $t\not\in\{i, i+1\}$ and $1\le i,t\le r<e$, $\lfloor\frac{i-t}{e}\rfloor=\lfloor\frac{i-t+1}{e}\rfloor$. Hence we again have $k^{(e)}_{\alpha_{xy}}(\mathcal{A}_{\mu})=k^{(e)}_{\alpha_{xy}}(\mathcal{A}_{\mu\sigma_i})$. 

    The case where $x\neq x'$ and $y=y'$ is symmetric. Hence the only remaining case is when $x=x'$ and $y=y'$, where we have:
    \[
        k^{(e)}_{\alpha_{xy}}(\mathcal{A}_{\mu})=\lfloor\frac{\mu_ie+e-r+i-1-(\mu_{i+1}e+e-r+i)}{e}\rfloor=(\mu_i-\mu_{i+1})+\lfloor\frac{-1}{e}\rfloor=\mu_i-\mu_{i+1}-1;
    \]
    \[
        k^{(e)}_{\alpha_{xy}}(\mathcal{A}_{\mu\sigma_i})=\lfloor\frac{\mu_ie+e-r+i-(\mu_{i+1}e+e-r+i-1)}{e}\rfloor=(\mu_i-\mu_{i+1})+\lfloor\frac{1}{e}\rfloor=\mu_i-\mu_{i+1}.
    \]
    Hence the corresponding Shi coefficents differ by one, as desired. As a consequence, we have proved the theorem for the case $1\le i\le r-1$.

    \smallskip The $\sigma_0$ case is analogous. Since $\mu\sigma_0$ is defined if and only if $\mu_{1}\geq 1$ and $\mu\sigma_0=\mu$ if and only if $\mu_1=\mu_r+1$, we may assume that $\mu_1\geq 1$ and $\mu_1\neq \mu_r+1$. Then we can calculate and compare the Shi coefficients similarly to the procedure above.
    
    Let $\lambda'$ be the partition corresponding to the $e$-abacus obtained by swapping the $(e{-}r)$-runner and the $(e{-}1)$-runner, then sliding the bead on the new $(e{-}r)$-runner down by one and sliding the bead on the new $(e{-}1)$-runner up by one. This operation is possible by the assumption. By definition, $P':=\Omega(\lambda')\in \mathcal{A}_{\mu\sigma_0}\cap\mathcal{W}_{r,e,w}$.

    We use $B_1$ and $B_r$ to denote the beads on the $(e{-}r)$-runner and the $(e{-}1)$-runner, respectively, of the $e$-abacus of $\lambda$, and let $\beta_{x'}$ and $\beta_{y'}$ be the corresponding $r$-beta numbers. The operation from $\lambda$ to $\lambda'$ on the $e$-abacus does not change the relative order of the beads, nor does it change the order of the beta numbers. Hence, we use $B_1'$ and $B_r'$ to denote the corresponding beads of the $e$-abacus of $\lambda'$, and let $\beta'_{x'}$ and $\beta'_{y'}$ be the corresponding $r$-beta numbers of $B_1'$ and $B_r'$, respectively. 

    We might assume $x'<y'$ as the opposite case is similar. Consider any positive root $\alpha=\alpha_{xy}$. If $x,y\not\in\{x',y'\}$, then the corresponding Shi coefficients are equal and there is nothing to show. If $x=x'$ and $y\neq y'$, let $B_t$ be the bead corresponding to $\beta_y$, then we have:
    \[
        k^{(e)}_{\alpha_{xy}}(\mathcal{A}_{\mu})=\lfloor\frac{\mu_{1}e+e-r-(\mu_{t}e+e-r+t-1)}{e}\rfloor=(\mu_1-\mu_t)+\lfloor\frac{1-t}{e}\rfloor;
    \]
    \[
        k^{(e)}_{\alpha_{xy}}(\mathcal{A}_{\mu\sigma_0})=\lfloor\frac{(\mu_{1}-1)e+e-1-(\mu_{t}e+e-r+t-1)}{e}\rfloor=(\mu_1-\mu_t)+\lfloor\frac{r-t-e}{e}\rfloor.
    \]
    Since $2\le t\le r-1$ and $e>r$, it follows that $\lfloor\frac{1-t}{e}\rfloor=\lfloor\frac{r-t-e}{e}\rfloor=-1$ and hence $k^{(e)}_{\alpha_{xy}}(\mathcal{A}_{\mu})=k^{(e)}_{\alpha_{xy}}(\mathcal{A}_{\mu\sigma_0})$ as desired.

    The case where $x\neq x'$ and $y=y'$ is symmetric. Hence the only remaining case is when $x=x'$ and $y=y'$, where we have:
    \[
        k^{(e)}_{\alpha_{xy}}(\mathcal{A}_{\mu})=\lfloor\frac{\mu_{1}e+e-r-(\mu_{r}e+e-1)}{e}\rfloor=(\mu_{1}-\mu_{r})+\lfloor\frac{1-r}{e}\rfloor=\mu_{1}-\mu_{r}-1;
    \]
    \[
        k^{(e)}_{\alpha_{xy}}(\mathcal{A}_{\mu\sigma_0})=\lfloor\frac{(\mu_{1}-1)e+e-1-\big((\mu_{r}+1)e+e-r\big)}{e}\rfloor=(\mu_1-\mu_{r}-2)+\lfloor\frac{r-1}{e}\rfloor=\mu_q-\mu_{r}-2.
    \]
    Hence the Shi coefficients differ by one for this positive root, while they are equal for all other positive roots. This implies that $\mathcal{A}_{\mu}$ and $\mathcal{A}_{\mu\sigma_0}$ share a common facet.
\end{proof}

We end this section with the following observation: as seen in \autoref{fig:r3e8-grid-0-9}, if we denote the rightmost $e$-alcove in $\mathfrak{A}_{r,e,w}$ by $\mathcal{A}_{\overline{w}}$, where $\overline{w}\in {}^{I}W$ is a minimal-length right coset representative such that $\overline{w}\mathcal{A}=\mathcal{A}_{\overline{w}}$, then $\mathfrak{A}_{r,e,w}=\{\mathcal{A}_{w}\mid w\in {}^{I}W,w\le \overline{w}\}$, where $\le$ is the Bruhat order.

\section{Proof of the Empty Runner Removal Theorem}\label{sec:proof-empty}

In this section, we provide the proof of \autoref{thm:empty-runner-removal}. The strategy is to show that the combinatorial operation of inserting an empty runner corresponds to a geometric stability of the weights. Specifically, we prove that the dominant weight $\Omega(\lambda)$ in the level-$e$ arrangement occupies the exact same relative alcove as the weight $\Omega(\lambda^+)$ does in the level-$(e+1)$ arrangement.

\begin{proof}[Proof of \autoref{thm:empty-runner-removal}]
    Let $\lambda$ be a partition and $\mu$ be a $e$-regular partition, both of which have no than $r$ parts.
    Fix $k \in \{0, \dots, e-1\}$.
    Write the beta numbers of $\lambda$ in the form $\beta_i(\lambda) = a_i e + b_i$ with $0 \le b_i < e$.
    Inserting an empty runner before the $k$-runner gives the beta numbers of $\lambda^+$:
    \begin{equation*}
        \beta_i(\lambda^+) \;=\; a_i(e+1) + b_i + \delta_{b_i \ge k}.
    \end{equation*}
    
    By \autoref{thm:goodman-wenzl}, $d^e_{\lambda,\mu}(v) = \mathfrak{n}^e_{\Omega(\lambda), \Omega(\mu)}(v)$. Thus, to prove the theorem, it suffices to show that
    \[
        \mathfrak{n}^e_{\Omega(\lambda), \Omega(\mu)}(v) \;=\; \mathfrak{n}^{e+1}_{\Omega(\lambda^+), \Omega(\mu^+)}(v).
    \]
    
    Recall the level-$e$ anti-spherical polynomials in \autoref{subsubsec:relabel-antispherical-kl}, this equality holds if $\Omega(\lambda)$ and $\Omega(\lambda^+)$ define the same element of the affine Weyl group. Using the Shi coefficient criterion \autoref{eq:shi-condition}, we prove that for every positive root $\alpha \in R^+$:
    \begin{equation}\label{eq:shi-equality-goal}
        k_\alpha^{(e)}\big(\Omega(\lambda)\big) \;=\; k_\alpha^{(e+1)}\big(\Omega(\lambda^+)\big).
    \end{equation}

    Any positive root $\alpha \in R^+$ is a sum of consecutive simple roots: $\alpha = \alpha_i + \dots + \alpha_{j-1}$ for $1 \le i < j \le r$.
    Since $\Omega(\lambda) = \sum_{k=1}^{r-1} \big(\beta_k(\lambda) - \beta_{k+1}(\lambda)\big) \Lambda_k$, using the duality $\langle \Lambda_k, \alpha_m^\vee \rangle = \delta_{km}$, we have:
    \begin{align*}
        \langle \alpha^\vee, \Omega(\lambda) \rangle \;&=\; \left\langle \sum_{m=i}^{j-1} \alpha_m^\vee, \sum_{k=1}^{r-1} \big(\beta_k(\lambda) - \beta_{k+1}(\lambda)\big) \Lambda_k \right\rangle \nonumber \\
        \;&=\; \sum_{k=i}^{j-1} \big(\beta_k(\lambda) - \beta_{k+1}(\lambda)\big) \nonumber \\
        \;&=\; \beta_i(\lambda) - \beta_j(\lambda).
    \end{align*}
    Using $\beta_i(\lambda)=a_ie+b_i$ as above, we have:
    \begin{equation*}
        \langle \alpha^\vee, \Omega(\lambda) \rangle \;=\; (a_i - a_j)e + (b_i - b_j).
    \end{equation*}
    Similarly, for the partition $\lambda^+$, we have:
    \begin{equation*}
        \langle \alpha^\vee, \Omega(\lambda^+) \rangle \;=\; (a_i - a_j)(e+1) + (b_i - b_j) + (\delta_{b_i \ge k} - \delta_{b_j \ge k}).
    \end{equation*}
    
    Let $A = a_i - a_j$ and let $\Delta := \delta_{b_i \ge k} - \delta_{b_j \ge k}\in\{-1,0,1\}$. We compute the floor functions in two cases.

    \medskip
    \noindent\textbf{Case 1: $b_i \ge b_j$.}
    Then $0 \le b_i - b_j < e$ and the level-$e$ Shi coefficient $k_\alpha^{(e)}\big(\Omega(\lambda)\big)$ is:
    \[
        \left\lfloor \frac{Ae + (b_i - b_j)}{e} \right\rfloor \;=\; A + \left\lfloor \frac{b_i - b_j}{e} \right\rfloor \;=\; A.
    \]
    For level $e+1$, we examine the remainder term $R := (b_i - b_j) + \Delta$.
    Since $b_i \ge b_j$, we cannot have $b_j \ge k > b_i$, so $\Delta \in \{0, 1\}$ and $0 \le R < e+1$. Thus, the level-$(e+1)$ Shi coefficient $k_\alpha^{(e+1)}\big(\Omega(\lambda^+)\big)$ is
    \[
        \left\lfloor \frac{A(e+1) + R}{e+1} \right\rfloor \;=\; A.
    \]

    \medskip
    \noindent\textbf{Case 2: $b_i < b_j$.}
    Then $-e < b_i - b_j < 0$. Rewrite $\langle\alpha^\vee, \Omega(\lambda) \rangle$ as $(A-1)e + (e + b_i - b_j)$. The term $(e + b_i - b_j)$ is strictly between $0$ and $e$. Thus:
    \[
        k_\alpha^{(e)}(\Omega(\lambda))=\left\lfloor \frac{\langle \alpha^\vee, \Omega(\lambda) \rangle}{e} \right\rfloor \;=\; A - 1.
    \]
    For level $e+1$, rewrite $\langle \alpha^\vee, \Omega(\lambda^+) \rangle$ as $(A-1)(e+1) + R'$, where $R' = (e+1) + (b_i - b_j) + \Delta$.
    We must show $0 \le R' < e+1$.
    Since $b_i < b_j$, we cannot have $b_i \ge k > b_j$, so $\Delta \in \{0, -1\}$. Hence, we have $1<b_i-b_j+e+1<e+1$ and $0<R'<e+1$. Thus, the level-$(e+1)$ Shi coefficient is also $A-1$.

    \medskip
    We have shown that for all $\alpha \in R^+$, the Shi coefficients coincide:
    \[
        k_\alpha^{(e)}\big(\Omega(\lambda)\big) \;=\; k_\alpha^{(e+1)}\big(\Omega(\lambda^+)\big).
    \]
    This implies that $\Omega(\lambda)$ and $\Omega(\lambda^+)$ correspond to the same element in the affine Weyl group (relative to their respective levels). The same argument applies to $\mu$ and $\mu^+$. Moreover, $\mu^+$ is clearly $(e+1)$-regular.  
    Consequently,
    \[
        \mathfrak{n}^e_{\Omega(\lambda), \Omega(\mu)}(v) \;=\; \mathfrak{n}^{e+1}_{\Omega(\lambda^+), \Omega(\mu^+)}(v),
    \]
    and therefore $d^e_{\lambda, \mu}(v) = d^{e+1}_{\lambda^+, \mu^+}(v)$, as desired.
\end{proof}

We end this section with a graphical explanation of the above proof, by drawing the alcoves corresponding to $\lambda$ and $\lambda^+$ in the level-$e$ and level-$(e+1)$ arrangements, respectively.
\begin{Example}\label{eg:empty-runner-removal-abaci-1}
    In type $\Aone[2]$, let $r=3$. Take $\lambda=(2,1)$, its abacus with $3$-beads is:
    \begin{center}
        \Abacus[entries=betas]{3}{2,1,0}
    \end{center}
    By adding an empty runner to the left of $i$-th runner where $i=0,1,2$, we get $\lambda^{+i}$:
    \begin{center}
        \Abacus[entries=betas]{4}{4,2,1},
        \Abacus[entries=betas]{4}{4,2,0},
        \Abacus[entries=betas]{4}{3,2,0}
    \end{center}
    It is easy to see $\lambda^{+0}=(4,2,1)$, $\lambda^{+1}=(4,2)$ and $\lambda^{+2}=(3,2)$. Identifying them with the dominant weights in $sl_3$, we have: $\lambda+\rho=2\Lambda_1+2\Lambda_2$, $\lambda^{+0}+\rho=3\Lambda_1+2\Lambda_2$, $\lambda^{+1}+\rho=3\Lambda_1+3\Lambda_2$ and $\lambda^{+2}+\rho=2\Lambda_1+3\Lambda_2$.
\end{Example}
\begin{Example}\label{eg:empty-runner-removal-abaci-2}
    In type $\Aone[2]$, let $r=3$. Take $\mu=(4,1,1)$, its abacus with $3$-beads is:
    \begin{center}
        \Abacus[entries=betas]{3}{4,1,1}
    \end{center}
    By adding an empty runner to the left of $i$-th runner where $i=0,1,2$, we get $\mu^{+i}$:
    \begin{center}
        \Abacus[entries=betas]{4}{7,2,2},
        \Abacus[entries=betas]{4}{6,2,2},
        \Abacus[entries=betas]{4}{6,2,1}
    \end{center}
    It is easy to see $\mu^{+0}=(7,2,2)$, $\mu^{+1}=(6,2,2)$ and $\mu^{+2}=(6,2,1)$. Identifying them with the dominant weights in $sl_3$, we have: $\mu+\rho=4\Lambda_1+\Lambda_2$, $\mu^{+0}+\rho=6\Lambda_1+\Lambda_2$, $\mu^{+1}+\rho=5\Lambda_1+\Lambda_2$ and $\mu^{+2}+\rho=5\Lambda_1+2\Lambda_2$.
\end{Example}
\begin{Example}\label{eg:empty-runner-removal-abaci-3}
    In type $\Aone[2]$, let $r=3$. Take $\nu=(9,6,1)$, its abacus with $3$-beads is:
    \begin{center}
        \Abacus[entries=betas]{3}{9,6,1}
    \end{center}
    By adding an empty runner to the left of $i$-th runner where $i=0,1,2$, we get $\nu^{+i}$:
    \begin{center}
        \Abacus[entries=betas]{4}{13,9,2},
        \Abacus[entries=betas]{4}{13,9,2},
        \Abacus[entries=betas]{4}{13,8,1}
    \end{center}
    It is easy to see $\nu^{+0}=\lambda^{+1}=(13,9,2)$ and $\nu^{+2}=(13,8,1)$. Identifying them with the dominant weights in $sl_3$, we have: $\nu+\rho=4\Lambda_1+6\Lambda_2$, $\nu^{+0}+\rho=\nu^{+1}+\rho=5\Lambda_1+8\Lambda_2$ and $\nu^{+2}+\rho=6\Lambda_1+8\Lambda_2$.
\end{Example}

\begin{Example}\label{eg-empty-runner-removal-alcove}
    Consider \autoref{eg:empty-runner-removal-abaci-1}. 
    We plot $\lambda+\rho$ in the level-$3$ alcove (\autoref{fig:level3}) and $\lambda^{+i}+\rho$ in the level-$4$ alcove (\autoref{fig:level4}), both in {\color{mblue}blue}. 
    Similarly, for $\mu$ and $\nu$ from \autoref{eg:empty-runner-removal-abaci-2} and \autoref{eg:empty-runner-removal-abaci-3}, we plot the corresponding points in {\color{mred}red} and {\color{mgreen}green}, respectively.

    \noindent
    \begin{minipage}[t]{0.49\textwidth}
        \centering
        \includegraphics[width=\linewidth]{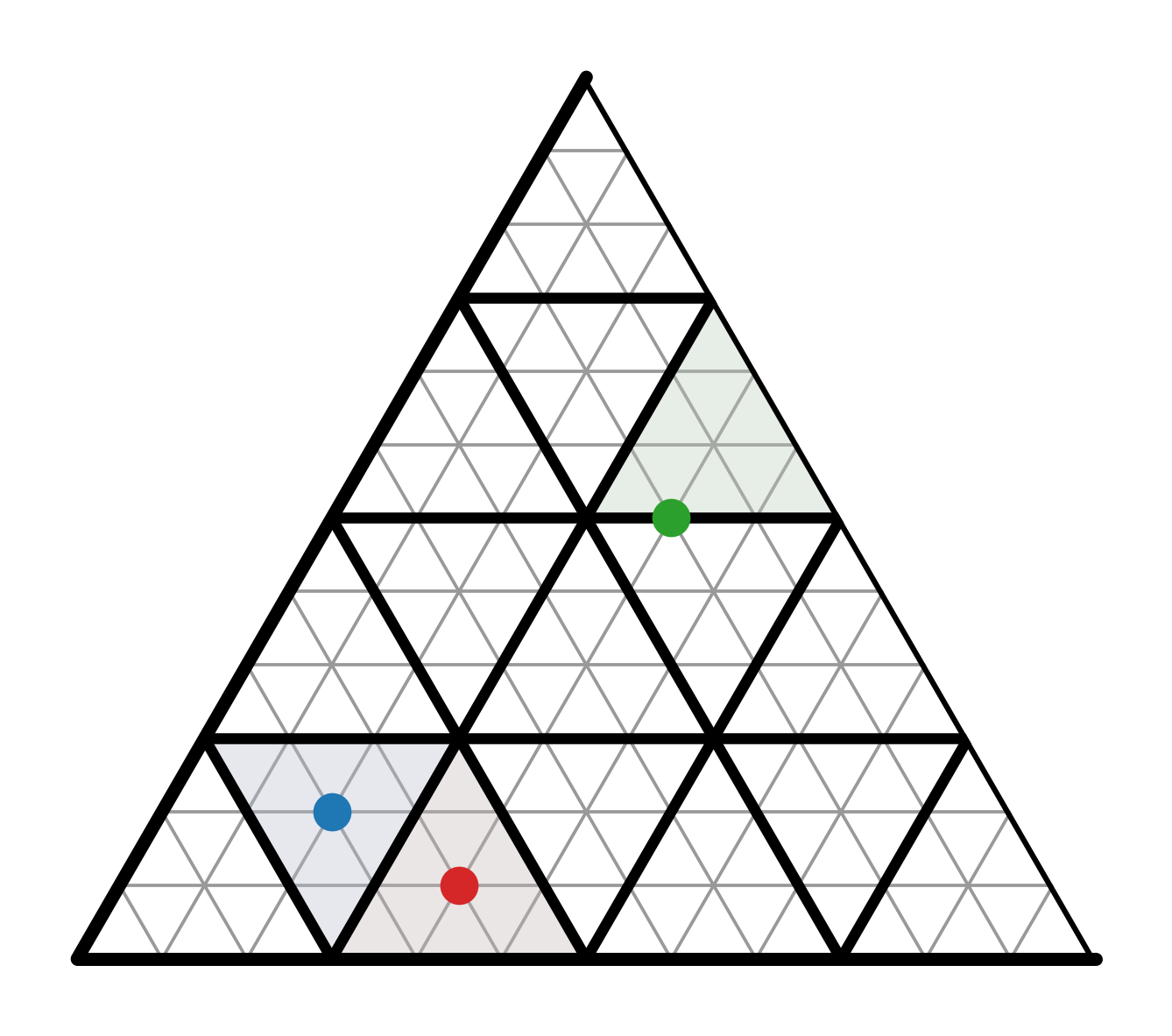}
        \captionof{figure}{Level 3}
        \label{fig:level3}
    \end{minipage}\hfill
    \begin{minipage}[t]{0.49\textwidth}
        \centering
        \includegraphics[width=\linewidth]{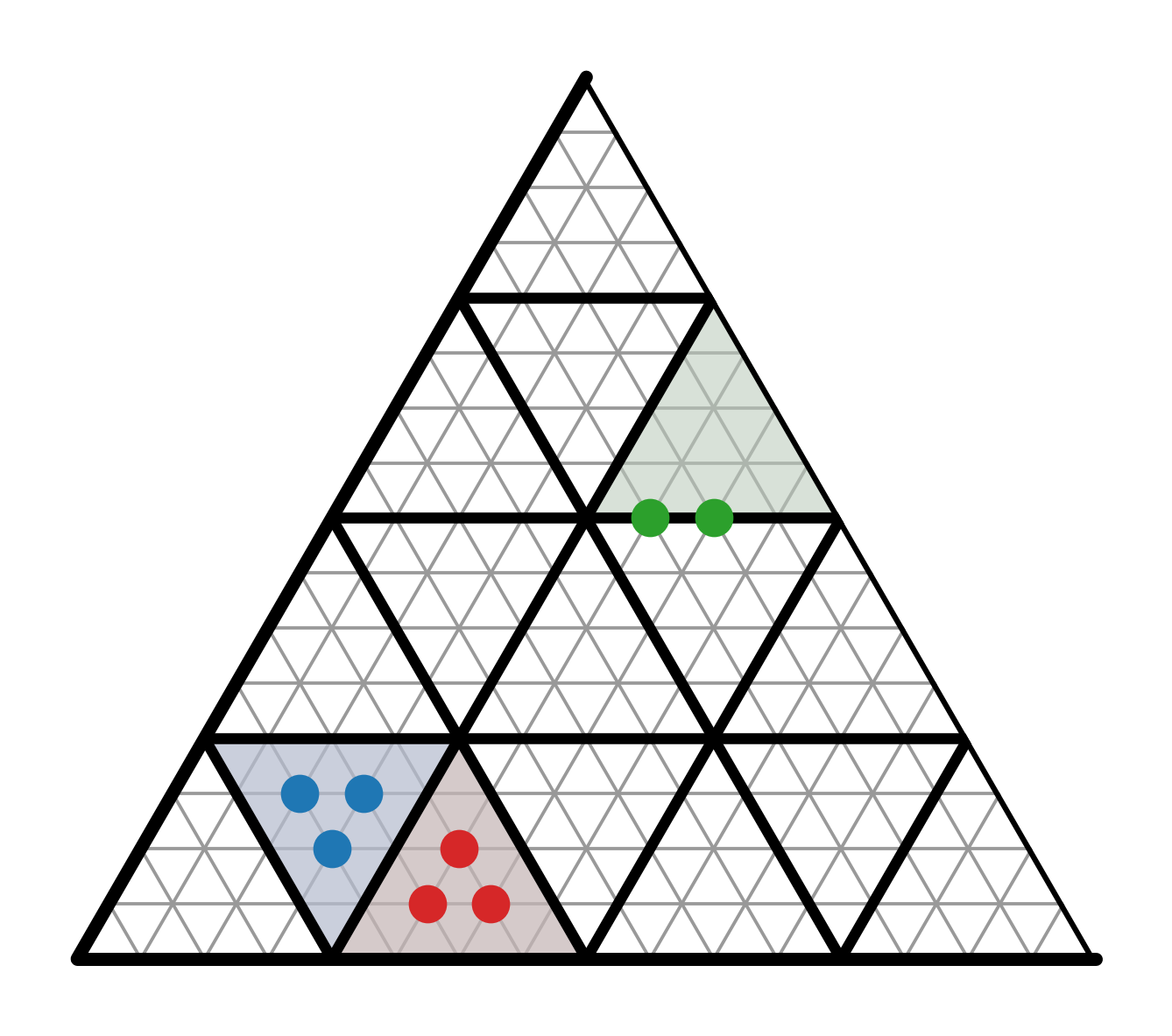}
        \captionof{figure}{Level 4}
        \label{fig:level4}
    \end{minipage}
\end{Example}
\section*{Acknowledgements}
\pdfbookmark[1]{Acknowledgements}{ack}
We thank Andrew Mathas for a careful reading of the manuscript and many helpful comments. We are also grateful to Finn Klein, Connor Simpson, Geordie Williamson, Joseph Baine, Tasman Fell, and Tom Goertzen for useful conversations. This work was partially supported by the Australian Research Council Discovery Grant DP240101809.
\bibliography{reference}  
\bibliographystyle{alpha}  

\newpage
\begin{figure}[htbp]
  \centering

  \begin{subfigure}[t]{0.49\textwidth}
    \centering
    \includegraphics[width=\linewidth]{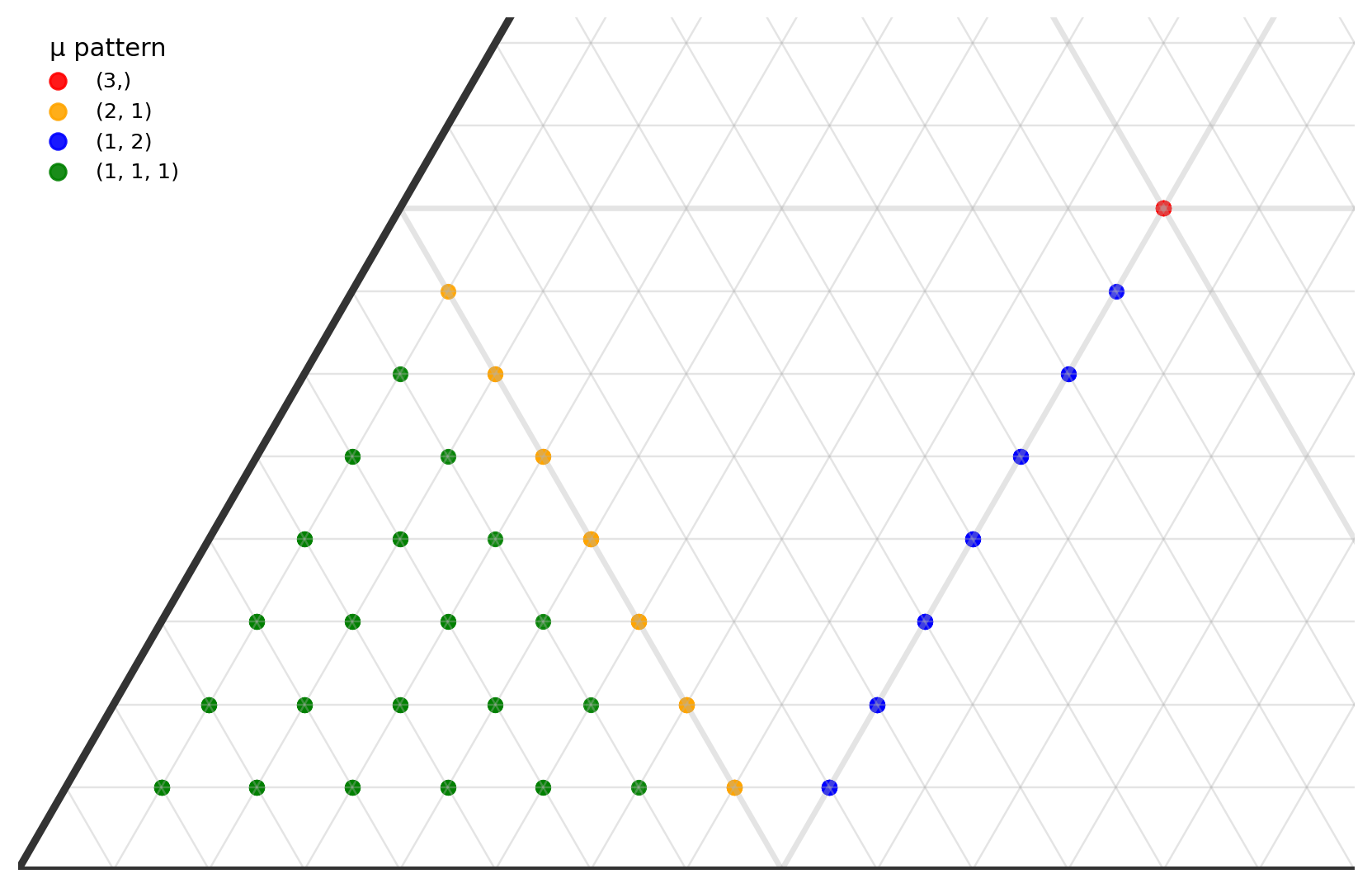}
    \subcaption{$r=3,\ e=8,\ w=0$}
    \label{fig:r3e8w0}
  \end{subfigure}
  \hfill
  \begin{subfigure}[t]{0.49\textwidth}
    \centering
    \includegraphics[width=\linewidth]{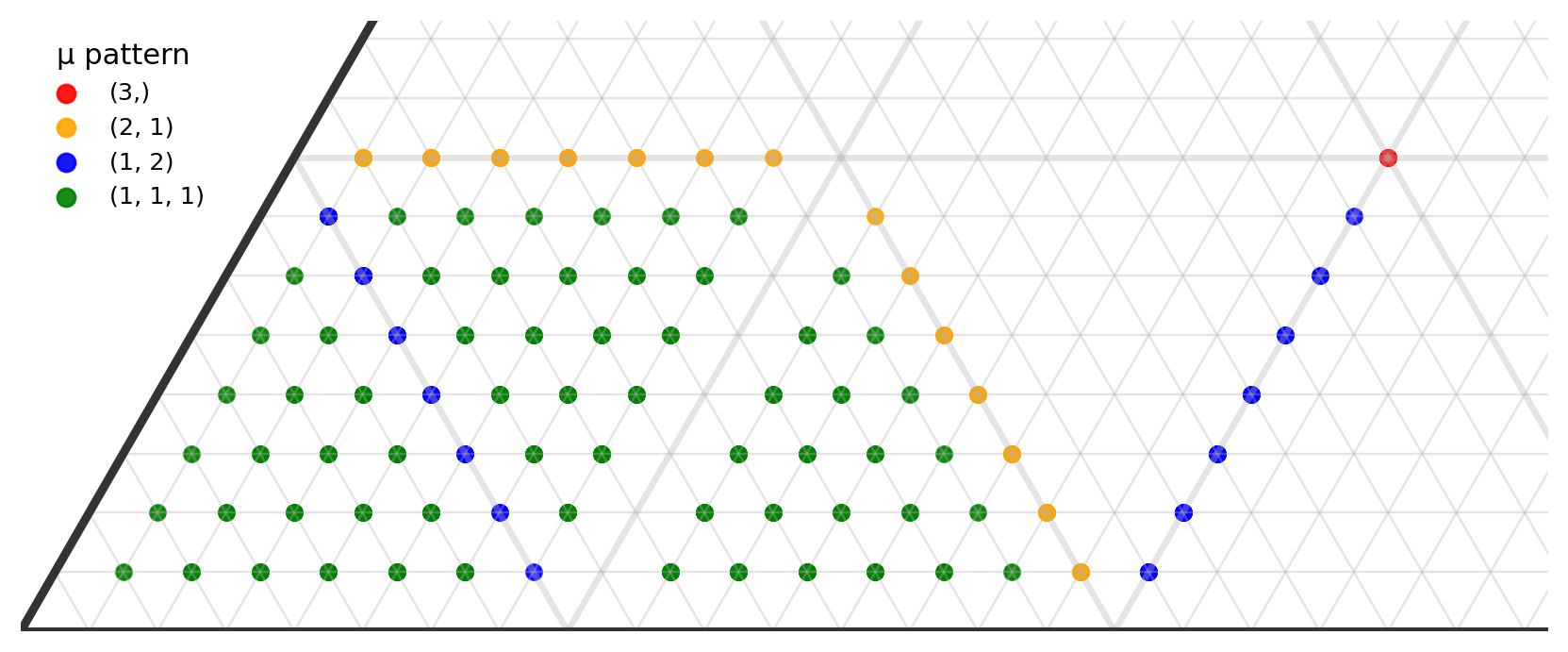}
    \subcaption{$r=3,\ e=8,\ w=1$}
    \label{fig:r3e8w1}
  \end{subfigure}

  \medskip

  \begin{subfigure}[t]{0.49\textwidth}
    \centering
    \includegraphics[width=\linewidth]{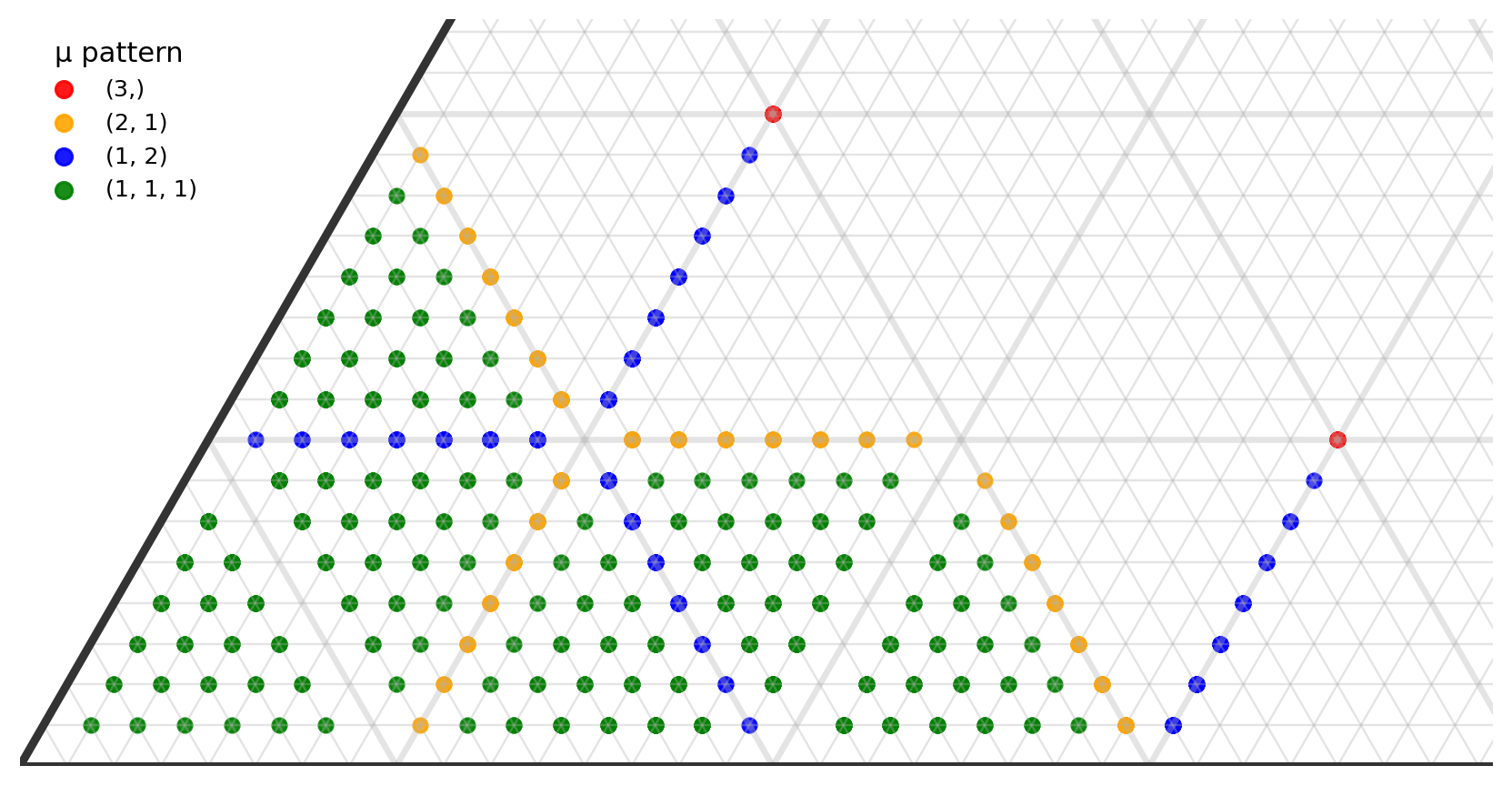}
    \subcaption{$r=3,\ e=8,\ w=2$}
    \label{fig:r3e8w2}
  \end{subfigure}
  \hfill
  \begin{subfigure}[t]{0.49\textwidth}
    \centering
    \includegraphics[width=\linewidth]{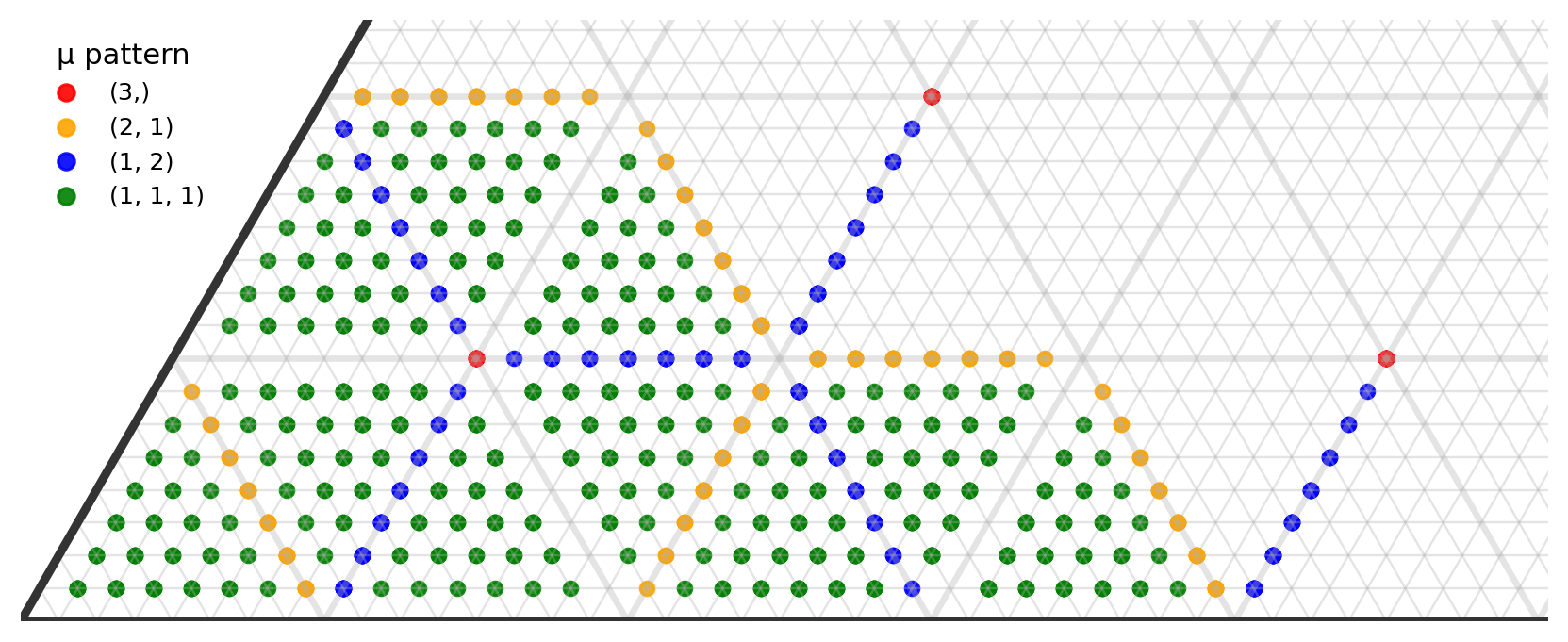}
    \subcaption{$r=3,\ e=8,\ w=3$}
    \label{fig:r3e8w3}
  \end{subfigure}

  \medskip

  \begin{subfigure}[t]{0.49\textwidth}
    \centering
    \includegraphics[width=\linewidth]{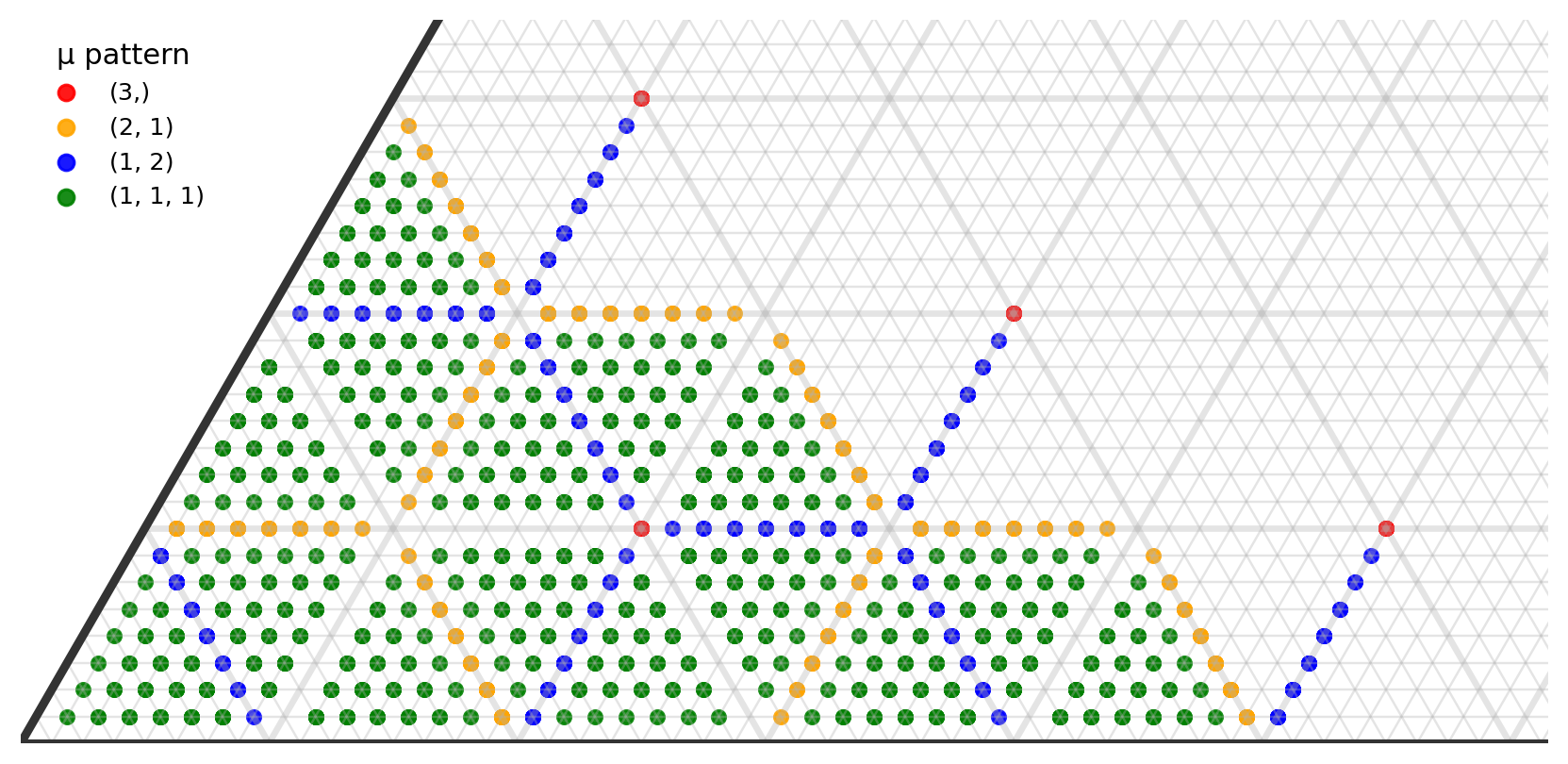}
    \subcaption{$r=3,\ e=8,\ w=4$}
    \label{fig:r3e8w4}
  \end{subfigure}
  \hfill
  \begin{subfigure}[t]{0.49\textwidth}
    \centering
    \includegraphics[width=\linewidth]{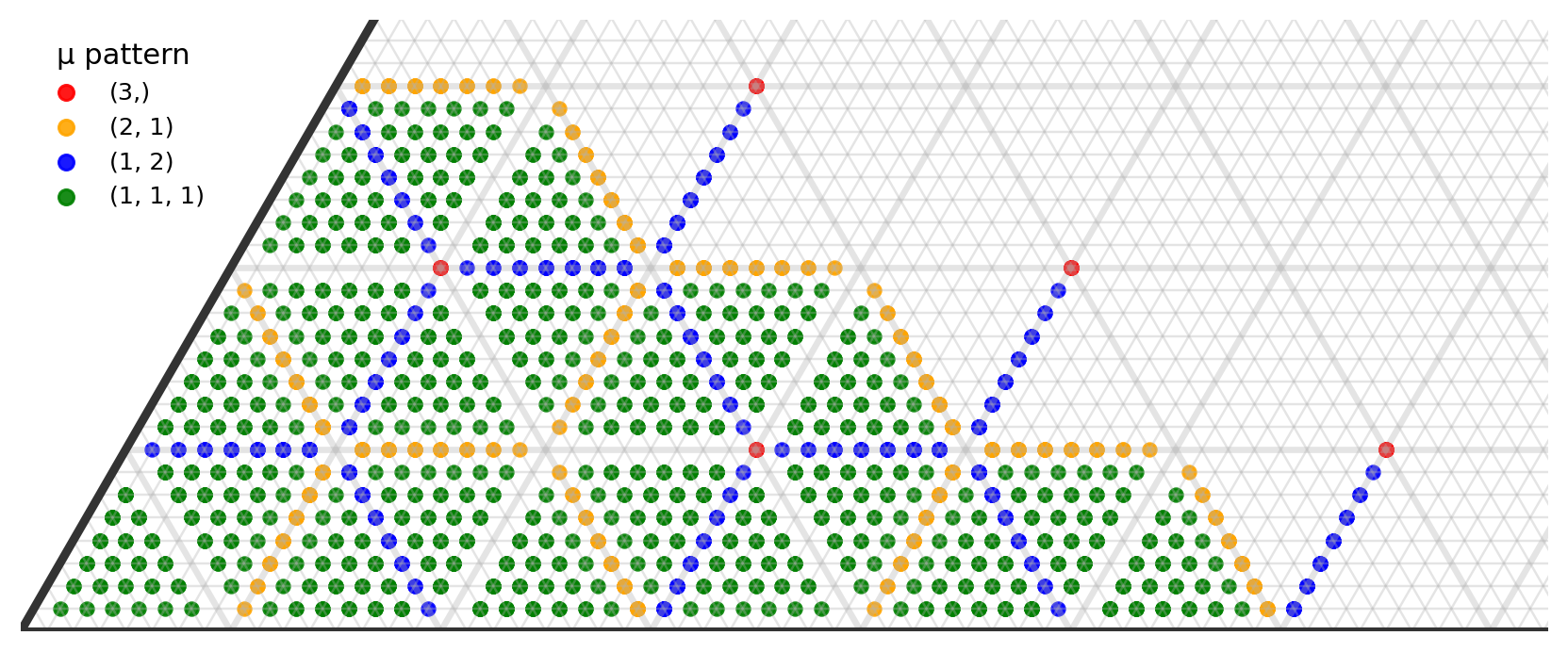}
    \subcaption{$r=3,\ e=8,\ w=5$}
    \label{fig:r3e8w5}
  \end{subfigure}

  \medskip

  \begin{subfigure}[t]{0.49\textwidth}
    \centering
    \includegraphics[width=\linewidth]{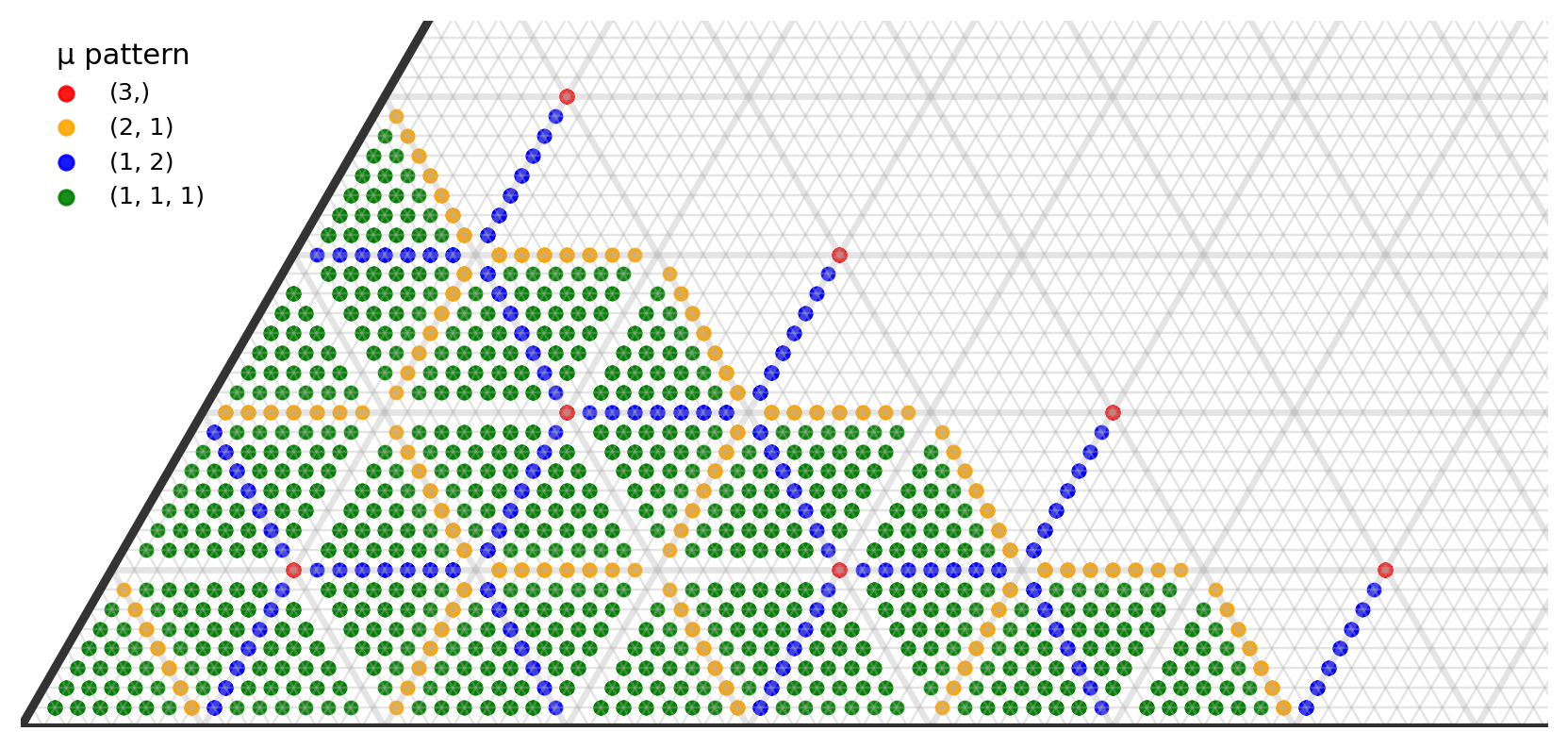}
    \subcaption{$r=3,\ e=8,\ w=6$}
    \label{fig:r3e8w6}
  \end{subfigure}
  \hfill
  \begin{subfigure}[t]{0.49\textwidth}
    \centering
    \includegraphics[width=\linewidth]{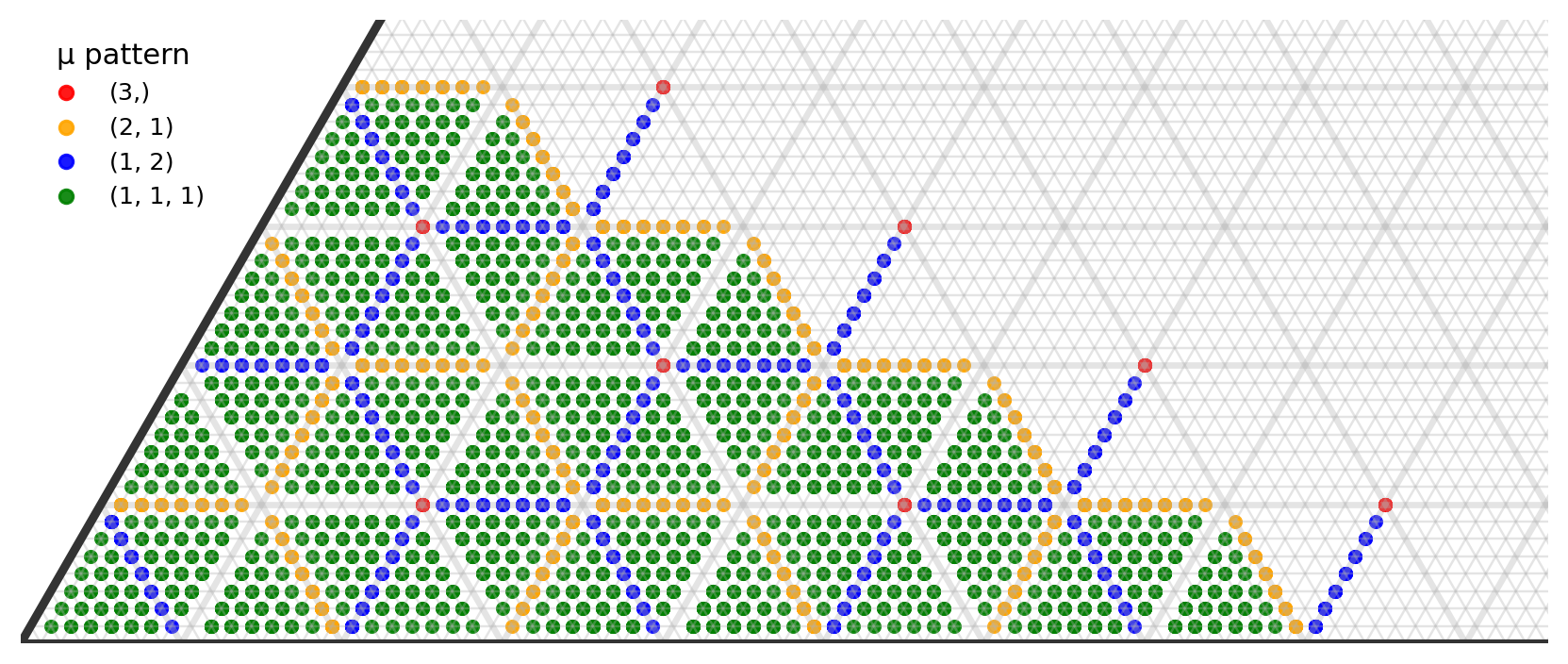}
    \subcaption{$r=3,\ e=8,\ w=7$}
    \label{fig:r3e8w7}
  \end{subfigure}

  \medskip

  \begin{subfigure}[t]{0.49\textwidth}
    \centering
    \includegraphics[width=\linewidth]{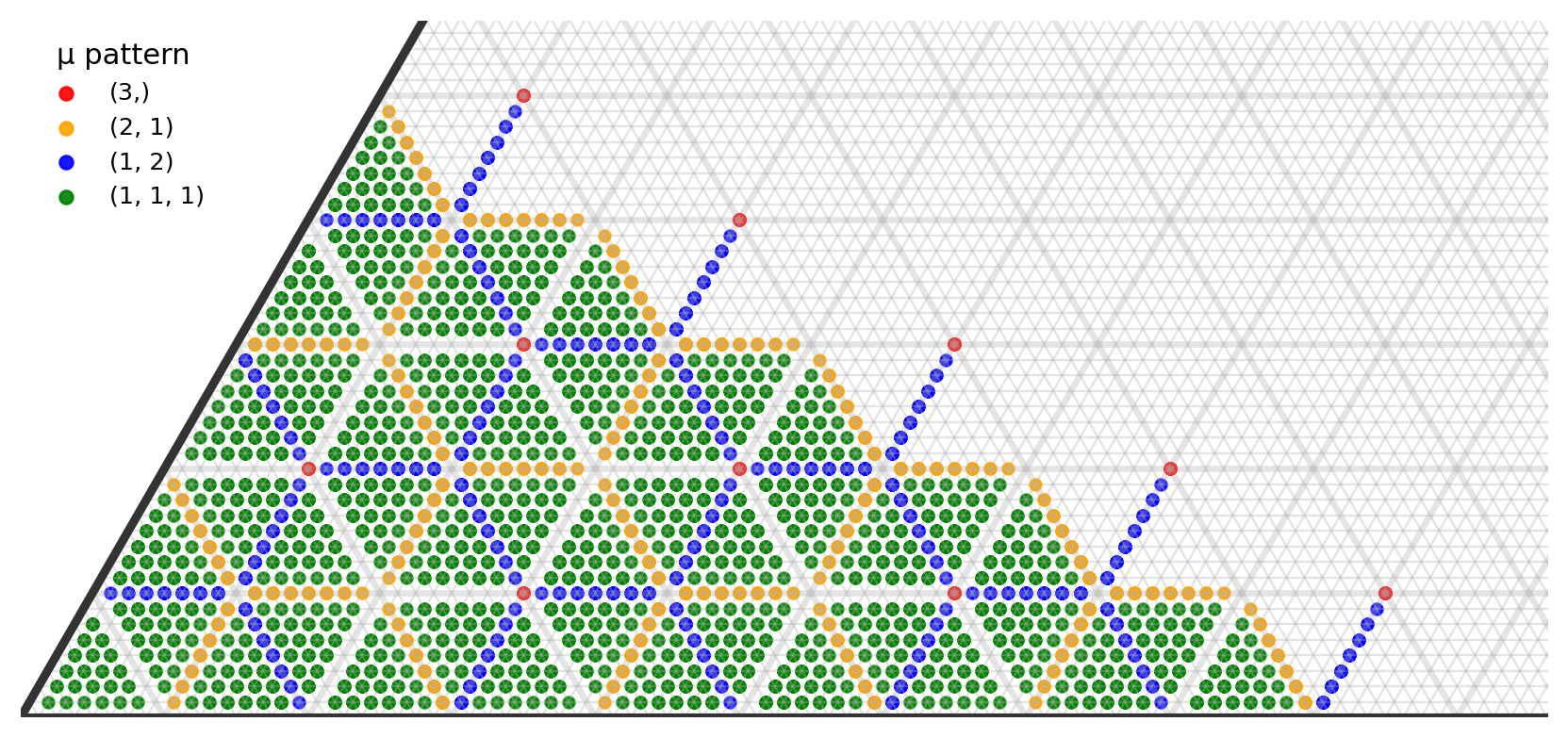}
    \subcaption{$r=3,\ e=8,\ w=8$}
    \label{fig:r3e8w8}
  \end{subfigure}
  \hfill
  \begin{subfigure}[t]{0.49\textwidth}
    \centering
    \includegraphics[width=\linewidth]{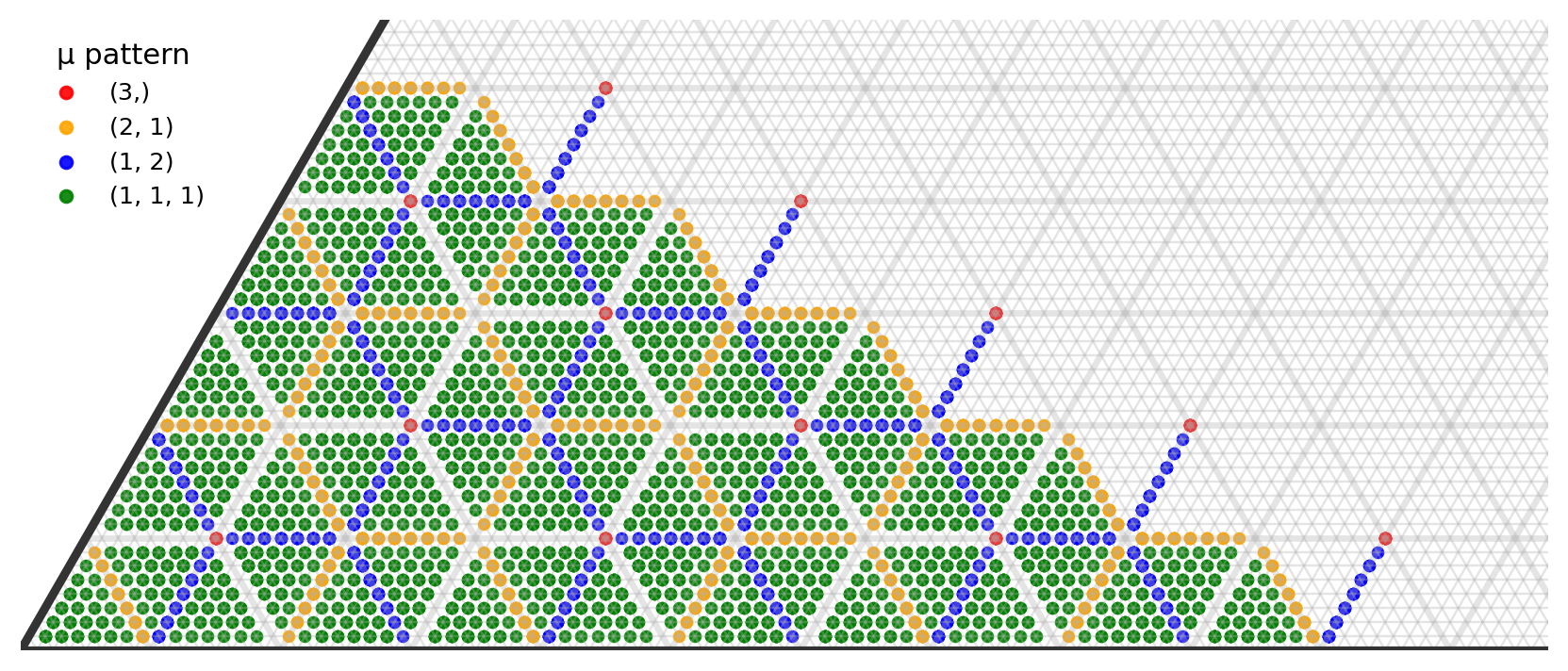}
    \subcaption{$r=3,\ e=8,\ w=9$}
    \label{fig:r3e8w9}
  \end{subfigure}

  \caption{$\mathfrak{sl}_3$ dominant weight pictures for $e=8$ and $w=0,1,\dots,9$.}
  \label{fig:r3e8-grid-0-9}
\end{figure}

\end{document}